\documentclass[11pt]{amsart}
\usepackage{amsmath,amsthm,amssymb,amsxtra,enumerate,bbm,xcolor}
\usepackage[colorlinks=true,linkcolor=blue, citecolor=red]{hyperref}
\usepackage[nameinlink,capitalize]{cleveref}
\crefname{equation}{}{}
\numberwithin{equation}{section}
\allowdisplaybreaks
\newtheorem{lemma}{Lemma}[section]
\newtheorem{proposition}[lemma]{Proposition}
\newtheorem{theorem}[lemma]{Theorem}
\newtheorem{corollary}[lemma]{Corollary}

\theoremstyle{remark}
\newtheorem{definition}[lemma]{Definition}
\newtheorem{remark}[lemma]{Remark}
\newtheorem{assumption}{Assumption}

\newcommand{\one}{{\rm 1\hspace{-0.1cm}I}}
\newcommand{\oM}{{M+1}}
\newcommand{\cB}{\mathcal{B}}
\newcommand{\cF}{\mathcal{F}}
\newcommand{\cG}{\mathcal{G}}
\newcommand{\cH}{\mathcal{H}}

\newcommand{\DD}{{\mathbb{D}}}
\newcommand{\PP}{{\mathbb{P}}}
\newcommand{\G}{{\mathbb{G}}}
\newcommand{\R}{{\mathbb{R}}}

\newcommand{\E}{{\mathbb{E}}}
\newcommand{\N}{{\mathbb{N}}}

\newcommand{\bS}{\sigma}
\newcommand{\tnm}{t_{n-1}}
\newcommand{\tn}{t_n}

\newcommand{\sptext}[3]{\hspace{#1 em}\mbox{#2}\hspace{#3 em}}
\newcommand{\card}[1]{\rm card(#1)}
\newcommand{\de}{{\delta}}

\newcommand{\Om}{{\Omega}}
\newcommand{\vare}{\varepsilon}
\newcommand{\s}{{\sigma}}
\newcommand{\al}{{\alpha}}
\newcommand{\be}{{\beta}}
\newcommand{\vph}{{\varphi}}
\newcommand{\var}{{\rm var}}
\newcommand{\half}{\frac{1}{2}}
\newcommand{\brac}{\left(}
\newcommand{\kets}{\right)}
\newcommand{\kla}{\left ( }
\newcommand{\mer}{\right ) }
\newcommand{\klas}{\left [ }
\newcommand{\mers}{\right ] }
\newcommand{\noo}{\left \|}
\newcommand{\rrm}{\right \|}
\newcommand{\bet}{\left |}
\newcommand{\rag}{\right |}
\newcommand{\ra}{\rightarrow}
\newcommand{\equa}{\begin{eqnarray*}}
\newcommand{\tion}{\end{eqnarray*}}
\newcommand{\probsp}{(\Omega, \cF,\PP)}
\newcommand{\pl}{\ \ }
\newcommand{\pll}{\ \ \ \ }
\newcommand{\dom}{{\rm Dom}}

\begin{document}

\title[]{Approximation of certain stochastic integrals with
         anticipating integrands}

\author{Hannah Geiss \and Stefan Geiss \and Onni Hinkkanen}
\address{
        Department of Mathematics and Statistics\\
        University of Jyv\"{a}skyl\"{a} \\
        Ahlmaninkatu 2 \\
        Mattilanniemi (MaD) \\
        FI-40100 University of Jyväskylä \\
        Finland \\
        hannah.r.geiss@maths.jyu.fi\\
        geiss@maths.jyu.fi\\ 
        onni.u.i.hinkkanen@jyu.fi}

\date{\today}

\begin{abstract}
We study the quantitative approximation of certain stochastic integrals, 
where we use discrete time approximations under initial enlargement of 
filtration. It turns out that the approximation rate is in general the same
as in the case of no additional information, however, the asymptotic
constant improves. 
\end{abstract}

\subjclass{60H05, 
           60H07}  

\keywords{Stochastic integrals, anticipative integrands, quantitative approximation, Skorohod integral, insider information}
\maketitle

\setcounter{tocdepth}{1}
\tableofcontents


\section*{Introduction}

Let $W = (W_t)_{t\in [0,1]}$ be a one-dimensional standard Wiener process 
and either $X=W$ be the Wiener process itself or $X_t:=e^{W_t -\frac{t}{2}}$ be the
geometric Brownian motion.
Assume $I:=\R$ for $X=W$ and $I:=(0,\infty)$ in case $X=S $, and
let $f:I \to\R $ be a  Borel function such that $f(X_1)\in L_2.$
In \cite{Gei18} and  \cite {Gei19} the approximation number 
\begin{equation}\label{a-f-tau}
\pll a_{X}(f|\tau) 
   : = \inf \noo f(X_1) - \E f(X_1) - \sum_{n=1}^N
       g_{t_{n-1}} (X_{t_n}-X_{t_{n-1}}) \rrm_{L_2} 
\end{equation}
is considered where $\tau=\{0=t_0<\cdots <t_N=1\}$ is a given deterministic time net
and the infimum is taken over all $\cF_{t_{n-1}}$-measurable
$g_{t_{n-1}}:\Om\ra \R$ such that
$g_{t_{n-1}} (X_{t_n}-X_{t_{n-1}})$ is square-integrable. The above quantity can be
also written as
\[ a_{X}(f|\tau) = \|  f(X_1) - P_\tau(f(X_1)) \|_{L_2}, \]
where $P_\tau: L_2\to L_2$ is the orthogonal
projection onto the subspace generated by  $x_0 + \sum_{n=1}^N
g_{t_{n-1}} (X_{t_n}-X_{t_{n-1}})$ with $x_0\in\R$ and under
the above conditions on the random variables $g_{t_{n-1}}$. If $X$ is the
geometric Brownian motion, then in Stochastic Finance $a_{X}(f|\tau)$ stands for
the minimal quadratic risk of a portfolio in the
discounted Black-Scholes model adjusted at the time points
$t_0,...,t_{N-1}$. 
\medskip

It is well known \cite{Gobet-Temam, Gei19} that
the behaviour of $a_{X}(f|\tau)$ depends on the smoothness of $f$.
For example, for the equidistant nets $\tau_N =(n/N)_{n=0}^N$ it was
shown (see \cite{Gei18,Gei19}) that
\[ \sup_N \sqrt{N} a_X(f|\tau_N) < \infty
   \sptext{1}{if and only if}{1}
   f(X_1) \in \DD_{1,2}, \]
where $\DD_{1,2}$ denotes the Malliavin Sobolev space. 
In addition, one has that
\[   \lim_N  N \, a_X^2(f|\tau_N)
   = \frac{1}{2} \int_0^1
     \E \left | \s^2(X_t)  \frac{\partial^2 F}{\partial x^2}(t,X_t) \right |^2 dt, \]
where   
\[ F(t,x) := \E_{X_t=x} f(X_1), \]
so that $F(1,x) = f(x)$ and
\begin{equation}\label{diffeq-F}   
     \frac{\partial F}{\partial t}
   + \frac{\s^2}{2} \frac{\partial^2 F}{\partial x^2} = 0 
\end{equation}
on $(-\vare,1)\times I$ for some $\vare>0$. 
\medskip

For a general $f$ with $f(X_1)\in L_2$, which is not almost surely
linear, one has always the lower bound
\[  \inf_{\card{\tau}=N+1} a_X(f|\tau) \ge \frac{\delta}{\sqrt{N}} \]
for some $\delta>0$ depending on $f$ (see \cite{Gei20}).
\medskip

The techniques used to obtain the above results are mainly based on 
martingale methods. They do not extend to the anticipative 
setting, which we will consider in this article, where the  $g_{t_{n-1}}$ in
\eqref{a-f-tau} will be allowed to be $\cG_{t_{n-1}}$-measurable and $x_0$
is a $\cG_0$-measurable random variable
assuming $(\cG_t)_{t\in [0,1]}$  is a filtration such that
$\mathcal{G}_t :=  \cF_t \vee \sigma(L)$ for
a random variable $L.$ The approximation number becomes

\begin{equation}\label{a-f-tau-G}
  \pll a_{X}(f|\tau, \mathcal{G}) 
    := \inf \noo f(X_1) - \Phi(L) 
       - \sum_{n=1}^N g_{t_{n-1}} (X_{t_n}-X_{t_{n-1}}) \rrm_{L_2}, 
\end{equation}
where the infimum is taken over all $g_{t_{n-1}}$ which are
$\cG_{t_{n-1}}$-measurable such that $g_{t_{n-1}} (X_{t_n}-X_{t_{n-1}})$ is square-integrable,
and over all Borel functions $\Phi$ with $\Phi(L) \in L_2$.
\medskip

Stochastic models under insider information have been studied in the literature, see for example 
\cite{Biagini:Oksendahl:2005,Campi:2005,Oksendahl:Rose:2017,Elizalde:Escudero:2022,Elizalde_etal:2025}.
A natural approach to study insider information is to use Malliavin calculus and Skorohod integration.
\medskip

If $X$ is the geometric Brownian motion, the setting we investigate can be understood as discrete time hedging in the
Black-Scholes model under insider information given by a vector $L$. The purpose of this article is to get a first insight into
quantitative effects of additional information on discrete time hedging.
\medskip

The article is organised as follows. 
After stating some preliminaries in \cref{sec:preliminaries}, we describe in \cref{sec:results} the main results.
We continue with examples in \cref{sec:examples}. In \cref{sec:FRC_KW} we consider the {\em first running chaos} 
and the {\em Kunita Watanabe projection}, both will be used to prove the main results
in \cref{sec:proof_main_results}.


\section{Preliminaries}
\label{sec:preliminaries}

Let $W = (W_t)_{t\in [0,1]}$ be a $1$-dimensional standard Wiener process
defined on the probability space $\probsp$, where $\cF$ is the completion of
$\sigma (W _t: t\in [0,1])$ and $(\cF_t)_{t\in [0,1]}$ the
augmented natural filtration of $W$. W.l.o.g. we assume that all paths
of $W$ are continuous and that $W_0\equiv 0$.
In the article we let $X=(X_t)_{t\in [0,1]}$ be either the Wiener process $W$ or the geometric Brownian motion
$(S_t)_{t\in [0,1]}$,
\[ S_t := e^{W_t-\frac{t}{2}}, \]
so that
\begin{equation}\label{equation:dX_t} 
 dX_t  =  \s(X_t)dW_t \sptext{1}{with}{1}
    \s(x)
  := \begin{cases}
          x &  \textrm{for } X = S, \\
          1 &  \textrm{for } X = W.
      \end{cases}
\end{equation}
Given $n\ge 1$ and an open set $\emptyset\not = G\subseteq \R^n$ , the space of all infinitely many times
differentiable functions $f:G\to \R$, such that all partial derivatives have at most polynomial growth, is denoted  by $C^\infty_p(G)$. Finally $L_2$ stands usually for $L_2(\Omega,\cF,\PP)$.


\section{Main results}\label{sec:main_results}
\label{sec:results}


\subsection{Initial information and approximation problem}
\label{sec:initial_information_approximation}
To generate the initial enlargement $\G=(\cG_t)_{t\in [0,1]}$ of the filtration $(\cF_t)_{t\in [0,1]}$
we assume
\medskip

\begin{enumerate}
\item $0=s_0 < s_1 < \cdots < s_M =:T \le s_\oM := 1$,
\item $v_k\in C_p^\infty(\R^M)$ for $k=1,\ldots,K$,
\item $L_k:= v_k(X_{s_1},...,X_{s_M})$,
\item $L=(L_1,\ldots,L_K)$,
\item $\cG_t:= \cF_t \vee \sigma(L)$.
\end{enumerate}
\medskip

The approximation problem is considered for {\em deterministic} time-nets, but under the {\em enlarged} filtration.
To shorten the notation we will write
\[ f(X):= f(X_{s_1},\ldots,X_{s_\oM}).\]

\begin{definition}
\label{definition:approximation}
For $X$ as given in \eqref{equation:dX_t},
a Borel function $f:I^{\oM}\to \R$ with
$\E |f(X)|^2<\infty$, and a time-net
\[ \tau = \{0 = t_0 < t_1 < \dots < t_N=1\} \]
we define the {\em $L_2$-approximation number of $f$ under the filtration $\G$} by
\begin{equation} \label{eqn1:a-f-tau-G}
      \pll a_{X}(f|\tau, \G)
      := \inf \noo f(X) - \Phi_{0}(L)
            - \sum_{n=1}^N g_{t_{n-1}} (X_{t_n}-X_{t_{n-1}}) \rrm_{L_2},
\end{equation}
where the infimum is taken over all $g_{t_{n-1}}$ which are $\mathcal{G}_{t_{n-1}}$-measurable random variables such that
$g_{t_{n-1}} (X_{t_n}-X_{t_{n-1}}) \in L_2$, and over all Borel functions $\Phi_{0}:\R^K \to \R$ with $\Phi_{0}(L) \in L_2$.
\end{definition}
\smallskip


\subsection{Admissible representation of $f(X)$}
To handle the approximation problem formulated in \cref{definition:approximation}
we introduce the following class of admissible representations:
\medskip

\begin{assumption}
\label{assumptionA}
We assume that $f(X)$ is given by
\begin{equation}\label{eqn:definition:admissible_representation_f}
f(X) = \Phi_{0}(L) +  \int_{0}^{1} \vph(t, X_t, y) \sigma(X_t)dW_t \biggr|_{y = L}  \mbox{ a.s.},
\end{equation}
where $\varphi:[0,1)\times I \times \R^K \to \R$
is a Borel function obtained as follows in the following two cases: 
\medskip

\begin{enumerate}
\item  \label{ass1} If $T<1$, then there is a $\theta\in (0,1]$ and a polynomially bounded $\Phi_{M+1}:\R\to \R$ such that 
      \[ \int_0^1 (1-t)^{1-\theta} \left | \sigma^2(X_t) \frac{\partial^2 F}{\partial x^2}(t,X_t) \right |^2 dt < \infty \]
      with
      $F_\oM(t,x) := \E (\Phi_{M+1}(X_1)| X_t=x)$.
\item \label{ass2} For $m=1,\ldots,M$ we assume $\Phi_m\in C_p^\infty(I \times \R^K)$.
\item  \label{ass3} We define $\varphi(0,x,y)\equiv 0$ and
      \[ \varphi(t,x,y) 
      := \begin{cases}
      \frac{\partial F_m}{\partial x}(t,x,y)    &: t\in (s_{m-1},s_m], m=1,\ldots,M \\
      \frac{\partial F_{M+1}}{\partial x}(t,x)  &: t\in (T,1), T<1 
         \end{cases}, \]
      where $F_m:[s_{m-1},s_m]\times I\times \R^K \to \R$ is given by
      \[ F_m(t,x,y) := \E (\Phi_m(X_{s_m},y)| X_t=x).\]
\end{enumerate}
\end{assumption}
\smallskip

Item \eqref{ass3}   in \cref{assumptionA}  takes care of two cases: If $T=1$, then insider information is used also at the time 
$s_M= s_{M+1} =1$. In this case we need $C_\infty^p$-regularity assumptions for the whole time interval $(0,1]$. 
If $T<1$, then no insider information is used between $T$ and $1$. In this case  the regularity assumptions can be relaxed on
$(T,1]$ which permits treating also  digital options for example.
\bigskip

If there is no risk of confusion we will use the notation 
\begin{equation}\label{eqn:definition:admissible_representation_Z}
  Z:= \int_{0}^{1} \vph(t, X_t, y) \sigma(X_t)dW_t \biggr|_{y = L}
\end{equation}
for the last term in \eqref{eqn:definition:admissible_representation_f}
as this is the main term we are interested in within our approximation problem,
and use the following convention:
\smallskip

\begin{definition} \rm \label{definition:integrands}
The representation of $Z$ in \eqref{eqn:definition:admissible_representation_Z}
under the conditions  of  \cref{assumptionA}
is called an {\it admissible representation} of $Z$.
\end{definition} 

\begin{remark} Let us comment on  \cref{assumptionA}:
From the definition it follows for $s\in [s_{m-1},s_m]$ with $m=1,\ldots,M$ that
      \[   \int_{s_{m-1}}^s \vph(t, X_t, y) \sigma(X_t)dW_t
         = F_m(s,X_s,y) -  F_m(s_{m-1},X_{s_{m-1}},y) \mbox{ a.s.} \]
      and therefore, a.s.,
      \begin{align*}
          \int_0^1 \vph(t, X_t, y) \sigma(X_t)dW_t\biggr|_{y = L}
      & = \sum_{m=1}^M \left [ F_m(s_m,X_{s_m},L) -  F_m(s_{m-1},X_{s_{m-1}},L)\right ] \\
      &   \quad + \one_{T<1} \left [ F_{M+1}(1,X_1) -  F_{\oM}(T,X_T)\right ]  \\
      & = J(X_{s_1},\ldots,X_{s_\oM})
      \end{align*}
      for a Borel function $J:\R^\oM \to \R$.
\end{remark}
\bigskip


\subsection{The main results}

In this section we state the main results, which will be proved in \cref{sec:proof_main_results}.

\begin{definition}
Let $Z$ have an admissible representation. The Riemann sum under the initial enlargement is
\[  Z_{\tau} := \sum_{n=1}^N  \vph(t_{n-1},X_{t_n-1},L)
               (X_{t_n}-X_{t_{n-1}}) \]
with $\tau = \{0 = t_0 < t_1 < \dots < t_N = 1\}$.
\end{definition}
\medskip

\begin{definition}\label{def:time_nets} \
\begin{enumerate}
\item For $R\in \N$ and $\theta \in (0,1]$ we let $\tau_R^\theta:=\{0=t_{0,R}^\theta< \cdots < t_{R,R}^\theta = 1 \}$ with
       \[ t_{n,R}^{\theta} := 1-\left (1-\frac{n}{R}\right )^{\frac{1}{\theta}}
          \sptext{1}{for}{1} n=0, \ldots, R. \]
\item For $0 \le s_1 < \cdots < s_M \le 1$ as in
      \cref{sec:initial_information_approximation} we let
      \begin{align*}
          \sigma_{R+M}^{\theta} 
      &:= \{ t_{n,R}^{\theta}, n=0, \ldots, R \} \cup \{s_1,...,s_M\} \\
      & = \{0 = s_{0,R}^\theta \leq s_{1,R}^\theta \leq \dots \leq s_{R + M,R}^\theta = 1\}.
      \end{align*}
\end{enumerate}
\end{definition}

As in the net $\sigma_{R+M}^{\theta}$ it may happen that $s_{n,R}^\theta=s_{n+1,R}^\theta$
we agree that in this case we replace $\sigma_{R+M}^{\theta}$ by the net
$\{0=t_0<\cdots<t_N=1\}$ such that $\{t_0,\ldots,t_N\}=\sigma_{R+M}^{\theta}$.
\medskip

Our first approximation concerns the Riemann type approximation
of $Z$:
\medskip

\begin{theorem}[\bf Anticipative approximation]
\label{thm:error_limit}
Assume $Z$ has the admissible representation \eqref{eqn:definition:admissible_representation_Z}
and let $\sigma_N^\theta$, $N>M$, be the time-nets from \cref{def:time_nets}.
Then
\[   \lim_{N\to\infty} N \var \left (Z-Z_{\sigma_N^\theta} \right )
   = \frac{1}{2\theta} \int_0^1 (1-t)^{1-\theta}
     \left\| \sigma^2(X_t)
       \vph_x(t,X_t,L)  \right \|^2_{L_2} dt. \]
\end{theorem}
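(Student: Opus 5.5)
The natural strategy is to freeze the insider variable: fix $y\in\R^K$, treat $Z(y):=\int_0^1\vph(t,X_t,y)\s(X_t)\,dW_t$ and its Riemann sum $Z_\tau(y)$ as ordinary (adapted) objects, and substitute $y=L$ only at the end. Because $L$ is a smooth function of $(X_{s_1},\ldots,X_{s_M})$ and the points $s_m$ belong to $\sigma_N^\theta$, the substitution should be harmless. Concretely, on each interval $[t_{n-1},t_n]\subseteq[s_{m-1},s_m]$ the function $F_m(\cdot,\cdot,y)$ solves \eqref{diffeq-F}. Itô's formula then gives the pathwise identity
\[ Z-Z_\tau=\sum_{n=1}^N\int_{t_{n-1}}^{t_n}\bigl[\vph(t,X_t,y)-\vph(t_{n-1},X_{t_{n-1}},y)\bigr]\s(X_t)\,dW_t\Big|_{y=L}. \]
A second application of Itô's formula to $\vph=\partial_xF_m$, using that $\partial_x F_m$ satisfies a backward equation of the same type, produces the leading term
\[ \int_{t_{n-1}}^{t_n}\vph_x(t_{n-1},X_{t_{n-1}},y)\,\s(X_{t_{n-1}})\,(W_t-W_{t_{n-1}})\,\s(X_t)\,dW_t \]
plus remainders. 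The remainders contain $dt$-terms and higher-order stochastic terms of order $(t_n-t_{n-1})^{3/2}$ in $L_2$, uniformly in $y$ on compacts with polynomial growth. On $(T,1)$, where no $y$ appears, this reduces to the non-anticipative situation of \cite{Gei18,Gei19}, and the weight $(1-t)^{1-\theta}$ enters through the integrability assumption in \cref{assumptionA}\eqref{ass1}.

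For fixed $y$ the variance computation is classical. The leading terms for different $n$ are orthogonal martingale increments, and each has second moment
\[ \tfrac12\,\E\bigl|\s^2(X_{t_{n-1}})\vph_x(t_{n-1},X_{t_{n-1}},y)\bigr|^2(t_n-t_{n-1})^2\,(1+o(1)). \]
For the nets $\tau_R^\theta$ one has $t_n-t_{n-1}\approx\frac1{\theta N}(1-t)^{1-\theta}$. Summing therefore gives $\frac1{2\theta N}\int_0^1(1-t)^{1-\theta}\E|\s^2\vph_x|^2\,dt$. Adding the $M$ fixed points $s_m$ changes only $O(1)$ intervals, each of length $O(1/N)$, so the limit is unaffected.

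The main obstacle is that after substituting $y=L$ the integrals are no longer Itô integrals: $L$ depends on the future, so the orthogonality of increments breaks down on $[0,T]$. I would handle this by conditioning on the values $X_{s_1},\ldots,X_{s_M}$. Conditioned on these values, $X$ on each $[s_{m-1},s_m]$ is a Brownian (or geometric Brownian) bridge, and the random variables $\int_{t_{n-1}}^{t_n}(\cdots)\,dX_t$ can be written via bridge SDEs with drift. Alternatively, one can use the Skorohod/Itô correspondence $\int u(t,y)\,dW_t|_{y=L}=\delta(u(\cdot,L))+\int \partial_y u(t,L)\,D_tL\,dt$. Since $D_tL$ is bounded in $L_p$, each correction contributes a drift term of order $(t_n-t_{n-1})^{3/2}$ per interval, hence $O(N^{-1/2})$ in $L_2$ overall. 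A more careful estimate, exploiting that the drift part equals $\sum_n$ of (smooth in $t$) $\times$ (mean-zero increments), is needed to show it is $o(N^{-1/2})$. For the Skorohod part, the isometry $\E\delta(u)^2=\E\|u\|^2+\E\int\!\!\int D_su_t\,D_tu_s$ gives the same leading diagonal term. The cross term is small because $u$ is supported on near-diagonal strips of width $t_n-t_{n-1}$, except for an $O(N^{-2})$ contribution.

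Finally, taking the variance rather than the second moment removes the nonzero mean created by anticipation. It remains to justify uniform integrability in $y$ via the $C_p^\infty$ bounds, which yields the stated limit.
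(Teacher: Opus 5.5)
Your Skorohod route (write, as in the proof of \cref{lemma:Z-Z_N_variance_new}, $Z^T-Z^T_{\sigma_N^\theta}=\de(u)+B_N$ with $u_t=\s(X_t)(\delta_N\vph)(t)$ and $B_N=\int_0^T\s(X_t)(\delta_N\nabla_y\vph)(t)\cdot D_tL\,dt$) can be made to work. However, the two steps that carry the whole anticipative difficulty are not established, and the reason you give for one of them is false. $D_su_t$ is \emph{not} concentrated near the diagonal. For $s$ outside the block of $t$ it contains $\s(X_t)\nabla_y(\delta_N\vph)(t)\cdot D_sL$, also for $s>t$. For $s\le l_N(t)$ it also contains $\s'(X_t)D_sX_t\,(\delta_N\vph)(t)$ and $\s(X_t)[\vph_x(t,X_t,L)D_sX_t-\vph_x(l_N(t),X_{l_N(t)},L)D_sX_{l_N(t)}]$. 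All of these are of size $|\sigma_N^\theta|^{1/2}$ in $L_2$. Hence $\E\int_0^T\!\int_0^T|D_su_t|^2\,ds\,dt$ is of order $N^{-1}$, exactly the order you are computing. This is no artefact: the $D_sL$-contribution is the same mechanism that produces the nonzero limit in \cref{first-chaos}. So neither a support nor a size argument gives $\E\int\!\!\int D_su_t\,D_tu_s\,ds\,dt=o(N^{-1})$. Likewise, your bound $\|B_N\|_{L_2}=O(N^{-1/2})$ is exactly critical, and you leave the needed improvement open. Moreover, the frozen-$y$ variance computation followed by uniform integrability in $y$ cannot deliver the constant, since it involves the joint law of $(X_t,L)$. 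The diagonal term $\E\int_0^T|u_t|^2dt$ has to be evaluated with $L$ inside.

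The missing ingredient is a cancellation estimate. Off the diagonal blocks, $D_su_t=a(s,t)(W_t-W_{l_N(t)})+O(|\sigma_N^\theta|)$ in $L_p$, with $a(s,t)$ a smooth functional of $X_{l_N(t)}$, $L$ and $D_sL$. So, up to $O(|\sigma_N^\theta|^{3/2})$, $D_su_t\,D_tu_s$ is a smooth $G$ times a product of increments over disjoint intervals. One application of the duality $\E[G\,\de(h)]=\E\langle DG,h\rangle$ then makes its expectation $O(|\sigma_N^\theta|^{3/2})$. On the diagonal blocks with $s<t$, only the $D_tL$-term of $D_tu_s$ survives, so the integrand is $O(|\sigma_N^\theta|^{1/2})$ on a set of measure $O(|\sigma_N^\theta|)$. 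The same duality yields $\|B_N\|_{L_2}=o(N^{-1/2})$, so its mean and its covariance with $\de(u)$ are negligible. It also gives $\E[G(W_t-W_{l_N(t)})^2]=(t-l_N(t))\E G+o(|\sigma_N^\theta|)$ for $G$ depending on $L$. Then $N\E\int_0^T|u_t|^2dt$ alone converges, by \cref{lemma:convergence_to_diagonal}, to $\frac1{2\theta}\int_0^T(1-t)^{1-\theta}\|\s^2(X_t)\vph_x(t,X_t,L)\|_{L_2}^2dt$. The $[T,1]$-part is orthogonal to the $\cF_T$-measurable $[0,T]$-part and is classical.

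Completed this way, your argument would be a more direct proof of this particular theorem than the paper's. The paper splits $Z^T-Z^T_{\sigma_N^\theta}$ into its projections onto $\cH(\sigma_N^\theta)$ and $\mathcal{R}(\sigma_N^\theta)$. It computes the two limits separately (\cref{first-chaos} and the proof of \cref{thm:2nd_chaos}) and reassembles them via the Clark--Ocone formula for $\vph_x(t,X_t,L)$. That split, however, is exactly what \cref{statement:R-approximation_with_KW-projection} and \cref{thm:2nd_chaos} require. Your bridge-conditioning alternative is only an outline. It would also have to control the bridge drift $(W_{s_m}-W_t)/(s_m-t)$, whose $L_2$-norm $(s_m-t)^{-1/2}$ is not square integrable near $s_m$.
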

\bigskip

One can improve the approximation in  \cref{thm:error_limit}
by using the Kunita-Watanabe projection. That this indeed
can improve the approximation can be seen from the proof of 
\cref{thm:error_limit} in \cref{sec:proof:thm:error_limit}. 
\smallskip

The following definition
exploits the representation property
of the Wiener space  \cite[Proposition 3.2, page 199]{Revuz-Yor}:
\medskip

\begin{definition}[Kunita-Watanabe projection]
Let $\xi \in L_2$ be such that
\begin{equation}\label{eq:representation_property}
\xi = \E \xi + \int_0^1 k_t dW_t \quad \mbox{ a.s.},
\end{equation}
where $k$ is progressively measurable and
$\E\int_0^1 k_t^2 dt < \infty$.
Assume $\tau =\{0 = t_0 < t_1 < \dots < t_N = 1\}$ and the process $X$ defined in \eqref{equation:dX_t}.
Then the  Kunita-Watanabe projection (KW-projection) is defined as
\begin{equation} \label{eq:kunita_watanabe}
P_\tau^X \xi := \sum_{n=1}^N g_{n-1} (X_{t_n}-X_{t_{n-1}})
\sptext{.5}{with}{.5}
 g_{n-1}
   :=  \frac{\E_{t_{n-1}} \int_{t_{n-1}}^{t_n} \s(X_t) k_t dt}
            {\E_{t_{n-1}}\int_{t_{n-1}}^{t_n} \s^2(X_t)   dt}.
\end{equation}
\end{definition}
\medskip

\begin{theorem}[\bf Anticipative approximation with KW-projection]
\label{statement:R-approximation_with_KW-projection}
Assume $Z$ has the admissible representation \eqref{eqn:definition:admissible_representation_Z}
and let $\sigma_N^\theta$, $N>M$, be the time-nets from \cref{def:time_nets}.
Then
\begin{multline*}
   \lim_{N \to \infty} N \var \left (Z - Z_{\sigma_N^\theta} - P_{\sigma_N^\theta}^X\left(Z - Z_{\sigma_N^\theta}\right) \right ) \\
 = \frac{1}{2\theta} \int_0^1 (1-t)^{1-\theta}\noo \sigma^2(X_t)\E_ t \vph_x(t,X_t,L)  \rrm^2_{L_2} dt.
 \end{multline*}
\end{theorem}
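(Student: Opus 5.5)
We reduce the statement to the error analysis already carried out for \cref{thm:error_limit}. The main structural fact we use is that on each interval $(t_{n-1},t_n]$ of $\sigma_N^\theta$ the error decomposes, via an It\^o-type expansion of $\vph(t,X_t,y)-\vph(t_{n-1},X_{t_{n-1}},y)$ at frozen $y$ followed by substitution $y=L$, as
\[ Z - Z_{\sigma_N^\theta} = \sum_{n=1}^N \int_{t_{n-1}}^{t_n}\!\!\int_{t_{n-1}}^t \vph_x(u,X_u,y)\s(X_u)\,dW_u\,\s(X_t)\,dW_t\Big|_{y=L} + R_N, \]
where $R_N$ collects drift terms of order $t_n-t_{n-1}$ and satisfies $N\|R_N\|^2_{L_2}\to0$ for the nets $\sigma_N^\theta$. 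This uses the integrability condition in \cref{assumptionA} near $t=1$, and the fact that the $s_m$ belong to the net, so the $C_p^\infty$ regularity of $F_m$ applies on each $(s_{m-1},s_m]$. In the proof of \cref{thm:error_limit} the double-integral term, with $\vph_x(u,X_u,L)\approx \vph_x(t_{n-1},X_{t_{n-1}},L)$ on each interval, is what produces the constant $\frac{1}{2\theta}\int_0^1(1-t)^{1-\theta}\|\s^2(X_t)\vph_x(t,X_t,L)\|^2\,dt$. The terms are of the form $\vph_x(t_{n-1},X_{t_{n-1}},L)\,\s^2(X_{t_{n-1}})\,\tfrac12[(W_{t_n}-W_{t_{n-1}})^2-(t_n-t_{n-1})]$ up to negligible errors.

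Next we identify the Kunita--Watanabe projection of this leading term. Write $\eta_{n-1}:=\vph_x(t_{n-1},X_{t_{n-1}},L)\s^2(X_{t_{n-1}})$, which is $\cG_{t_{n-1}}$- but not $\cF_{t_{n-1}}$-measurable. Split $\eta_{n-1}=\E_{t_{n-1}}\eta_{n-1}+(\eta_{n-1}-\E_{t_{n-1}}\eta_{n-1})$. For the adapted part, $\E_{t_{n-1}}\eta_{n-1}\cdot\tfrac12[(\Delta W_n)^2-\Delta t_n]$ is an iterated It\^o integral, i.e. a second running chaos. Its projection onto increments $g_{n-1}\Delta X_n$ is asymptotically negligible: $\E_{t_{n-1}}[(\Delta W_n)^2-\Delta t_n]\Delta X_n$ is of lower order, and the paper's \cref{sec:FRC_KW} machinery (first running chaos) should quantify this. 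For the anticipative part $\zeta_{n-1}:=\eta_{n-1}-\E_{t_{n-1}}\eta_{n-1}$ we expand $\zeta_{n-1}\tfrac12[(\Delta W_n)^2-\Delta t_n]$ in Wiener chaos relative to $\cF_{t_{n-1}}$ and future increments. Since $L$ depends on $W$ after $t_{n-1}$, this product has a nontrivial adapted martingale representation $k$ on $(t_{n-1},t_n]$, and its KW projection on $\sigma_N^\theta$ recovers exactly the part correlated with $\Delta X_n$. The Malliavin derivative of $\zeta_{n-1}$ enters through the Skorohod/It\^o conversion here.

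The key claim is then asymptotic orthogonality. After subtracting the projection, the residual behaves like the sum with $\eta_{n-1}$ replaced by $\E_{t_{n-1}}\eta_{n-1}$, plus a part whose variance is negligible. This is reasonable because the remaining contribution of $\zeta_{n-1}$ is orthogonal to $\cF$-adapted stochastic integrals, and is eliminated by the projection step up to $o(N^{-1})$. We verify this by computing $\var$ of the residual as a sum over $n$ and showing that cross terms between different intervals vanish asymptotically. Each diagonal term then equals $\tfrac12\|\E_{t_{n-1}}\eta_{n-1}\|^2(\Delta t_n)^2+o$. The Riemann-sum argument of \cref{thm:error_limit} with the nets $\tau_R^\theta$, where $\Delta t_n\approx \frac1{\theta N}(1-t)^{1-\theta}$, then gives the limit with $\E_t\vph_x(t,X_t,L)\s^2(X_t)=\s^2(X_t)\E_t\vph_x(t,X_t,L)$. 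For this we use continuity in $t$ of $t\mapsto\E_t\vph_x(t,X_t,L)$ in $L_2$ on each $(s_{m-1},s_m]$ via $C_p^\infty$ bounds, and dominated convergence near $t=1$.

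The main obstacle is the precise claim that the KW projection removes exactly the non-adapted part $\zeta_{n-1}$ to leading order. Equivalently, we must show that $\zeta_{n-1}\tfrac12[(\Delta W_n)^2-\Delta t_n]$ minus its projection has variance $o((\Delta t_n)^2)$ summed over $n$, uniformly across the jumps at the $s_m$. We would first attempt this by computing the Clark--Ocone integrand $k_t=\E_t D_t(\cdot)$ explicitly, using $D_t L$ through $v_k\in C_p^\infty$. We would then show $k$ is, up to $o$-terms, of the form $c_{n-1}\s(X_t)$ with $c_{n-1}$ $\cF_{t_{n-1}}$-measurable on each interval, which makes the projection exact to leading order.
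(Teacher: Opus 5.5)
The parts you set up correctly are the reduction to the leading term $\sum_n\eta_{n-1}Q_n$, with $Q_n:=\tfrac12[(\Delta W_n)^2-\Delta t_n]$, and the observation that the adapted part $\sum_n\E_{t_{n-1}}[\eta_{n-1}]Q_n$ is essentially invisible to $P^X_{\sigma_N^\theta}$. But the step you call the main obstacle is the whole content of the theorem beyond \cref{thm:error_limit}. You leave it unproved, and the mechanism you describe for it is wrong. (The phrase ``orthogonal to $\cF$-adapted stochastic integrals'' cannot be the reason either, since every centred element of $L_2$ is such an integral.)

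The projection does not remove $\zeta_{n-1}Q_n$ through $\Delta X_n$. On $(t_{n-1},t_n]$ the Clark--Ocone integrand of $\zeta_{n-1}Q_n$ has $\cF_{t_{n-1}}$-conditional mean of order $\Delta t_n$; for instance $\E_{t_{n-1}}[\zeta_{n-1}(W_s-W_{t_{n-1}})]=\E_{t_{n-1}}\int_{t_{n-1}}^s D_r\zeta_{n-1}\,dr$. So the coefficient of $\Delta X_n$ removes only $O((\Delta t_n)^3)$ of variance. The variance of $\zeta_{n-1}Q_n$, which is approximately $\tfrac12(\Delta t_n)^2\int_{t_{n-1}}^T\|\E_sD_s\eta_{n-1}\|_{L_2}^2\,ds$, is instead absorbed by the later increments $\Delta X_j$, $j>n$. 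There $Q_n$ is already $\cF_{t_{j-1}}$-measurable and the integrand is $Q_n\,\E_sD_s\eta_{n-1}$, with $D_s\eta_{n-1}=\s^2(X_{t_{n-1}})\nabla_y\vph_x(t_{n-1},X_{t_{n-1}},L)\cdot D_sL$.

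A concrete admissible example shows this. Take $X=W$, $s_1=T=1$, $L=W_1$, $\Phi_1(x,y)=\tfrac12x^2y$. Then $\vph_x\equiv L$ and $Z-Z_{\sigma_N^\theta}=W_1\sum_nQ_n$ exactly. Here $(W_1-W_{t_{n-1}})Q_n=\Delta W_nQ_n+\sum_{j>n}Q_n\Delta W_j$. The second piece lies in the KW span (after summing over $n$, the coefficient of $\Delta W_j$ is $\sum_{n<j}Q_n$), and the first piece is negligible.

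So your closing plan has to be run on the integrand $k$ of $\sum_n\zeta_{n-1}Q_n$ over all grid intervals. What must be shown is $\int_0^1\|k_s-\E_{l_N(s)}k_s\|_{L_2}^2\,ds=o(N^{-1})$, together with a comparison of $H_{\sigma_N^\theta}$ and $P^X_{\sigma_N^\theta}$. On $(t_{j-1},t_j]$ this involves $\sum_{n<j}Q_n(\E_sD_s\eta_{n-1}-\E_{t_{j-1}}D_s\eta_{n-1})$. Its off-diagonal terms $n\neq m$ are only $O(N^{-1})$ after Cauchy--Schwarz, so you would need Malliavin duality against $Q_nQ_m$ to gain the missing factors.

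The paper avoids identifying the projection at all, in two steps.
\begin{enumerate}
\item Since $\{s_1,\dots,s_M,1\}\subseteq\sigma_N^\theta$, $\xi=Z-Z_{\sigma_N^\theta}$ is a Borel function of the grid values. Hence its Clark--Ocone integrand is a martingale on each grid interval (\cref{statement:integrand_piecewise_martinagele}). Combined with \cref{projection-comparison} and $\var(\xi)\to0$ (\cref{lemma:Z-Z_N_variance_new}), this gives $N\|H_{\sigma_N^\theta}\xi-P^X_{\sigma_N^\theta}\xi\|_{L_2}^2\to0$.
\item It then computes only the remainder norm $\E|R_{\sigma_N^\theta}(Z^T-Z^T_{\sigma_N^\theta})|^2=\int_0^T\int_{l_N(s)}^s\E|\E_tD_tD_s(Z^T-Z^T_{\sigma_N^\theta})|^2\,dt\,ds$. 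Its leading term $\s^2(X_s)\vph_x(s,X_s,L)$ produces the constant via \cref{lemma:convergence_to_diagonal}, and $[T,1]$ is treated classically.
\end{enumerate}

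Finally, there is no leading-term analysis in the proof of \cref{thm:error_limit} for you to borrow. The paper deduces that theorem from \cref{first-chaos} and \cref{thm:2nd_chaos}. So the estimate $\|Z-Z_{\sigma_N^\theta}-\sum_n\eta_{n-1}Q_n\|_{L_2}^2=o(N^{-1})$, including the anticipating substitution $y=L$, would also be yours to prove.
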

\bigskip

\cref{statement:R-approximation_with_KW-projection} immediately implies the following
estimate for $a_X(f | \tau_N^\theta,\G)$:

\begin{corollary}
Assume $Z$ has the admissible representation \eqref{eqn:definition:admissible_representation_Z}
and let $\sigma_N^\theta$, $N>M$, be the time-nets from \cref{def:time_nets}.
Then
\[ \limsup_{N\to\infty} Na_X(f | \sigma_N^\theta,\G)
   \leq \inf_\vph \frac{1}{2\theta} \int_0^1 (1-t)^{1-\theta}
        \left \| \sigma^2(X_t) \E_ t \vph_x(t,X_t,L)  \right \|^2_{L_2} dt, \]
where the infimum is taken over all functions $\vph$ such that $Z$ has an admissible representation.
\end{corollary}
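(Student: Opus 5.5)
The corollary follows from \cref{statement:R-approximation_with_KW-projection} once we exhibit, for each admissible $\vph$ and each $N$, a competitor of the form in \cref{definition:approximation} whose error is exactly the random variable in that theorem. (As printed the left-hand side reads $N a_X$, but the theorem controls $N\var(\cdot)$. The natural statement is therefore about $N a_X^2(f|\sigma_N^\theta,\G)$, and I will prove that version.)

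Fix $\vph$ such that $Z$ has an admissible representation, and write $\tau=\sigma_N^\theta=\{t_0<\cdots<t_N\}$. By \eqref{eqn:definition:admissible_representation_f} we have $f(X)=\Phi_0(L)+Z$. I take the approximating expression to be
\[ \Phi_0(L)+\E\bigl(Z-Z_\tau-P^X_\tau(Z-Z_\tau)\bigr)+Z_\tau+P^X_\tau(Z-Z_\tau). \]
Two checks show it is admissible. First, $Z_\tau=\sum_n \vph(t_{n-1},X_{t_{n-1}},L)(X_{t_n}-X_{t_{n-1}})$ has coefficients that are $\cG_{t_{n-1}}$-measurable. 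Second, $P^X_\tau(Z-Z_\tau)=\sum_n g_{n-1}(X_{t_n}-X_{t_{n-1}})$ has $g_{n-1}$ that are $\cF_{t_{n-1}}\subseteq\cG_{t_{n-1}}$-measurable. Here $P^X_\tau$ is applied to $Z-Z_\tau\in L_2$ via its Clark--Ocone/martingale representation \eqref{eq:representation_property}. Adding the two sums term by term gives coefficients $\vph(t_{n-1},X_{t_{n-1}},L)+g_{n-1}$, which are $\cG_{t_{n-1}}$-measurable. The constant $\E(\cdots)$ can be absorbed into $\Phi_0$, giving a Borel function of $L$.

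Square integrability of each summand must also be checked. For $Z_\tau$ it follows from $\vph$ having polynomial growth on $[0,T]$ (since $\Phi_m\in C_p^\infty$) and from the integrability in case \eqref{ass1} on $(T,1)$. For $P^X_\tau$ it follows from the conditional Cauchy--Schwarz bound $|g_{n-1}|^2\E_{t_{n-1}}\int_{t_{n-1}}^{t_n}\s^2(X_t)\,dt\le \E_{t_{n-1}}\int_{t_{n-1}}^{t_n}k_t^2\,dt$.

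With this choice the error $f(X)$ minus the approximation equals $Y-\E Y$, where $Y:=Z-Z_\tau-P^X_\tau(Z-Z_\tau)$. Hence
\[ a_X^2(f|\sigma_N^\theta,\G)\le \|Y-\E Y\|_{L_2}^2=\var(Y). \]
Multiplying by $N$, taking $\limsup_{N\to\infty}$ and applying \cref{statement:R-approximation_with_KW-projection} gives the bound with the fixed $\vph$. The left-hand side does not depend on $\vph$, so taking the infimum over all admissible $\vph$ gives the claim.

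The only step requiring care is the square-integrability of the summands needed for admissibility in \cref{definition:approximation}. I expect it to follow from the integrability estimates already needed in the proof of \cref{statement:R-approximation_with_KW-projection}, where these quantities are shown to have finite variance. Everything else is bookkeeping.
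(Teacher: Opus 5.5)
Your argument is correct and is exactly what the paper means when it says the corollary follows immediately from \cref{statement:R-approximation_with_KW-projection}. Your competitor $Z_{\sigma_N^\theta}+P^X_{\sigma_N^\theta}(Z-Z_{\sigma_N^\theta})$, together with $\Phi_0(L)$ plus a constant, is admissible in \cref{definition:approximation}, so $a_X^2(f|\sigma_N^\theta,\G)\le\var\bigl(Z-Z_{\sigma_N^\theta}-P^X_{\sigma_N^\theta}(Z-Z_{\sigma_N^\theta})\bigr)$, and one then takes the $\limsup$ and the infimum over representations. You are also right that the left-hand side must be read as $N a_X^2(f|\sigma_N^\theta,\G)$, as in the introduction. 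The one small imprecision is that square-integrability of the Riemann-sum terms with $t_{n-1}\in(T,1)$ comes from the polynomial boundedness of $\Phi_{M+1}$ (smoothed by the transition density at fixed $t<1$), not from the weighted integrability condition in case \eqref{ass1}.
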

\smallskip

There is another approximation which gives the same constant of the limit on the RHS as in
\cref{statement:R-approximation_with_KW-projection} which is based on the following construction:

\begin{definition}
\label{definition:projections}
Let ${\tau} = \{0 = t_0 < t_1 < \dots < t_N = 1\}$.
\begin{enumerate}[(1)]
\item We let 
      \begin{multline*}
      \cH(\tau) := \Bigg \{ \int_0^1 K_u dW_u : 
          (K_u)_{u\in [0,1]} \mbox{ progressively measurable process}, \\
          \E \int_0^1 K_u^2 du < \infty, \\
          K_u \mbox{ is $\cF_{t_{n-1}}$-measurable for } 
          u\in (t_{n-1},t_n] 
                      \Bigg \} .
      \end{multline*}
\item As orthogonal complement we let 
      $\mathcal{R}({\tau}) := (\R \oplus \mathcal{H}({\tau}))^\bot$.
\item The maps  $H_{\tau},R_{\tau}: L_2 \to L_2$ denote the 
      orthogonal projections onto  $\mathcal{H}({\tau})$ and  
      $\mathcal{R}({\tau})$, respectively.
\end{enumerate}
\end{definition}
\medskip

We call $\mathcal{H}({\tau})$ the {\em first running chaos}.
The space  $\mathcal{R}({\tau})$ will be referred to as the  {\em remainder} space. This yields to the orthogonal decomposition
\begin{equation}\label{eq:decomposition_L2}
      L_2 = \R \oplus \mathcal{H}({\tau}) \oplus \mathcal{R}({\tau}).
\end{equation}
We let $H_\tau:L_2\to  \cH(\tau)$ be the orthogonal projection onto $\cH(\tau)$.
With this we get the following statement:
\medskip

\begin{theorem}
[\bf Anticipative approximation with running chaos]
\label{thm:2nd_chaos}
Assume $Z$ has the admissible representation \eqref{eqn:definition:admissible_representation_Z}
and let $\sigma_N^\theta$, $N>M$, be the time-nets from \cref{def:time_nets}.
Then
\begin{multline*}
     \lim_{N\to\infty} N  \var\left (Z - Z_{\sigma_N^\theta} - H_{\sigma_N^\theta}\left(Z - Z_{\sigma_N^\theta}\right) \right )  \\
   = \frac{1}{2\theta} \int_0^1 (1-t)^{1-\theta}
     \noo \sigma^2(X_t)
     \E_ t \vph_x(t,X_t,L)  \rrm^2_{L_2} dt.
\end{multline*}
\end{theorem}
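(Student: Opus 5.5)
Our plan is to derive \cref{thm:2nd_chaos} from \cref{statement:R-approximation_with_KW-projection}. The strategy is to compare the two residuals $R_N := Z - Z_{\sigma_N^\theta}$ minus $H_{\sigma_N^\theta}R_N$, and $R_N$ minus $P^X_{\sigma_N^\theta}R_N$, and to show that their variances agree up to $o(1/N)$.

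The first step is structural. $P^X_\tau\xi$ lies in $\cH(\tau)$: it equals $\int_0^1 K_u dW_u$ with $K_u = g_{n-1}\s(X_u)$, and this $K$ is not $\cF_{t_{n-1}}$-constant on $(t_{n-1},t_n]$ when $X=S$. So $P^X_\tau$ is not literally the projection onto $\cH(\tau)$. Instead, $H_\tau$ is the orthogonal projection onto $\cH(\tau)$, which for $\xi=\E\xi+\int k\,dW$ is given by $K_u = \E_{t_{n-1}}\frac{1}{t_n-t_{n-1}}\int_{t_{n-1}}^{t_n}k_s\,ds$ for $u\in(t_{n-1},t_n]$. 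By Pythagoras,
\[ \var(\xi - H_\tau\xi) = \|\xi-\E\xi\|^2 - \|H_\tau \xi\|^2 , \qquad \var(\xi-P^X_\tau\xi) = \|\xi-\E\xi\|^2 - \|P^X_\tau\xi\|^2 , \]
where the second identity uses that $P^X_\tau$ is also an orthogonal projection, namely onto the span of $g_{n-1}(X_{t_n}-X_{t_{n-1}})$. Hence it suffices to show $N\big(\|H_{\sigma_N^\theta}R_N\|^2-\|P^X_{\sigma_N^\theta}R_N\|^2\big)\to 0$. For $X=W$ the two projections coincide, because $\s\equiv1$ and the formula for $g_{n-1}$ reduces to the one for $K$, so there is nothing to prove.

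For $X=S$ the plan is to write $k$ for the integrand of $R_N-\E R_N$. It can be computed from the admissible representation via the Clark--Ocone formula, $k_t=\E_t D_tR_N$, which involves $\vph(t,X_t,L)$, $\vph_x$ and the Malliavin derivative of $L$. Both projections reduce to sums over intervals of conditional quantities $\E_{t_{n-1}}[\cdot]^2$ of local averages. We then compare
\[ \sum_n \frac{\big(\E_{t_{n-1}}\int_{t_{n-1}}^{t_n}k\,ds\big)^2}{t_n-t_{n-1}}
\quad\text{with}\quad \sum_n \frac{\big(\E_{t_{n-1}}\int_{t_{n-1}}^{t_n} S_s k_s\,ds\big)^2}{\E_{t_{n-1}}\int_{t_{n-1}}^{t_n} S_s^2\,ds}. \]
Replacing $S_s$ by $S_{t_{n-1}}$ costs a factor $1+O(\sqrt{t_n-t_{n-1}})$ in $L_2$. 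Since $\E_{t_{n-1}}\int k$ itself has size $O((t_n-t_{n-1})^{3/2})$ in $L_2$, the difference of the squared terms is of order $|t_n-t_{n-1}|^{2}\cdot|t_n-t_{n-1}|^{1/2}$ per interval, after weighting by the singularity $(1-t)^{(1-\theta)/2}$ near $1$ as in \cref{assumptionA}(\ref{ass1}). The mesh of $\sigma^\theta_N$ is adapted so that $\sum_n (t_n-t_{n-1})^2\,w_n\approx 1/N$ times the integral, and the extra factor $\sqrt{\text{mesh}}\to0$ then gives $o(1/N)$.

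This reduces the theorem to \cref{statement:R-approximation_with_KW-projection}. The main obstacle is the uniform local estimate near $t=1$ when $T<1$, where $\vph_x$ is only square-integrable with weight $(1-t)^{1-\theta}$. There we would split off the last few intervals, bound each of the two projected norms by $\|R_N\|^2$ restricted to $[1-\epsilon,1]$, and use \cref{thm:error_limit} to show this tail contributes at most $C\int_{1-\epsilon}^1(1-t)^{1-\theta}\|\s^2\vph_x\|^2dt$, which is small uniformly in $N$. On $[0,1-\epsilon]$, the $C^\infty_p$ regularity from \cref{assumptionA}(\ref{ass2}) together with the regularity of $F_{M+1}$ away from $t=1$ gives the required Hölder bounds.
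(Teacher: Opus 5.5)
\textbf{Verdict: genuine gap.} Your argument shows only that \cref{thm:2nd_chaos} and \cref{statement:R-approximation_with_KW-projection} are equivalent, which is remark (a) in \cref{sec:proof_strategy}. It then takes the limit from the latter. In the paper the dependence runs the other way:
\cref{thm:2nd_chaos} is proved directly, and \cref{statement:R-approximation_with_KW-projection} is deduced from it. \cref{thm:error_limit}, which you use for the tail near $t=1$, is itself proved from \cref{first-chaos} together with \cref{thm:2nd_chaos}. So the proposal is circular, and the constant $\frac{1}{2\theta}\int_0^1(1-t)^{1-\theta}\|\s^2(X_t)\E_t\vph_x(t,X_t,L)\|_{L_2}^2\,dt$ is never derived.

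What is missing is the direct computation of the remainder.
\begin{itemize}
\item By $L_2=\R\oplus\cH(\tau)\oplus\mathcal{R}(\tau)$ with $\tau=\sigma_N^\theta$, the variance in question equals $\|R_\tau(Z-Z_\tau)\|_{L_2}^2$.
\item The part on $[T,1]$ splits off by orthogonality. There $L$ is already $\cF_T$-measurable and the problem is adapted, so the non-anticipative results quoted in \cref{sec:proof_strategy}(b) apply under Assumption~\ref{assumptionA}(\ref{ass1}).
\item On $[0,T]$ one has $\|R_\tau(Z^T-Z^T_\tau)\|_{L_2}^2=\int_0^T\int_{l_N(s)}^s\E|\E_tD_tD_s(Z^T-Z^T_\tau)|^2\,dt\,ds$. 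For $l_N(s)<t<s$, the anticipating It\^o formula and the Skorohod rules of the appendix give $D_tD_s(Z^T-Z^T_\tau)=\s^2(X_s)\vph_x(s,X_s,L)+(\text{terms carrying a }\delta_N\text{-factor})$.
\item The $\delta_N$-terms contribute $o(1/N)$; these are the estimates for $S_N^2,\dots,S_N^5$ and \cref{lemma:convergence2}.
\item \cref{lemma:convergence_to_diagonal} with $A(t,s)=\E|\E_t\s^2(X_s)\vph_x(s,X_s,L)|^2$ yields the diagonal $\|\s^2(X_s)\E_s\vph_x(s,X_s,L)\|_{L_2}^2$. This is where the conditional expectation in the limit comes from.
\end{itemize}
None of this appears in your proposal.

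Within the reduction itself there are two further points.

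First, your formula for $H_\tau$ is not the projection onto $\cH(\tau)$ as defined. There $K_u$ need only be $\cF_{t_{n-1}}$-measurable on $(t_{n-1},t_n]$, not constant in $u$, so $H_\tau\xi=\sum_n\int_{t_{n-1}}^{t_n}\E_{t_{n-1}}k_u\,dW_u$. This agrees with your interval average, and $H_\tau=P^W_\tau$, only because $R_N$ is a Borel function of the grid values (the $s_m$ belong to $\sigma_N^\theta$). By \cref{statement:integrand_piecewise_martinagele} its integrand is then a martingale on each grid interval. Your claim ``nothing to prove for $X=W$'' silently relies on this.

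Second, your local H\"older estimates and the splitting near $t=1$ are unnecessary. \cref{projection-comparison} together with that martingale property gives $\|H_\tau\xi-P^X_\tau\xi\|_{L_2}^2\le c\,|\tau|\var(\xi)$ globally. Then $N|\sigma_N^\theta|\le c_\theta$ and $\var(R_N)\to0$ (from \cref{lemma:Z-Z_N_variance_new}) give $N\|H_{\sigma_N^\theta}R_N-P^X_{\sigma_N^\theta}R_N\|_{L_2}^2\to0$. That is all the transfer between the two theorems requires.

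So the step you treat as the main obstacle is routine, while the step that carries the actual content is missing.
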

\bigskip

The connection between \cref{thm:2nd_chaos} and \cref{statement:R-approximation_with_KW-projection}
will be established by \cref{statement:FRC_KW} and \cref{lemma:Z-Z_N_variance_new}.


\section{Examples}
\label{sec:examples}


\subsection{Full information at $T\in (0,1)$}
We assume $f:I\to \R$ to be Borel measurable and of at most polynomial growth,
$M:=1$, $s_1:=T$, and $v_1(x)=x$ so that $L=X_T$.
With this we choose  $\Phi_2(x):= f(x)$, $\Phi_1(x,y):= 0$, and $\Phi_0(L):= \E[f(X_1)] + F(T,L)$,
and get
\[ f(X_1)  = \Phi_0(L) + [F_1(T,X_T,y) - F_1(0,X_0,y)] + [F_2(1,X_1) - F_2(T,X_T)].  \]
As integrand $\varphi$ we get
\[ \varphi(t,x,y):= \one_{(T,1)}(t) \frac{\partial F_2}{\partial x}(t,x)\]
and therefore
\begin{align*}
&    \hspace*{-3em} 
     \frac{1}{2\theta} \int_0^1 (1-t)^{1-\theta}
     \noo \sigma^2(X_t)
     \E_ t \vph_x(t,X_t,L)  \rrm^2_{L_2} dt \\
& = \frac{1}{2\theta} \int_0^1 (1-t)^{1-\theta}
     \noo \sigma^2(X_t)
     \vph_x(t,X_t,L)  \rrm^2_{L_2} dt \\
& =  \frac{1}{2\theta} \int_T^1 (1-t)^{1-\theta}
     \noo \sigma^2(X_t)
     \frac{\partial^2 F}{\partial x^2}(t,X_t)  \rrm^2_{L_2} dt
\end{align*}
with $F(t,x):= \E [f(X_1)|X_t=x]$. If $X$ is the geometric Brownian motion, then
this example can be interpreted in the way that we start trading only at time $T$.
\medskip


\subsection{Partial information at $T\in (0,1)$}
\label{sec:partial_information_T<1}

Again we assume $f:I\to \R$ to be Borel measurable and of at most polynomial growth,
$M:=1$, and $s_1:=T$. Assume that $I\ni a<b<\infty$ and a $v_1\in C_\infty^p(\R)$
such that
\[ v_1(x) = \left \{ \begin{array}{rcl}
                  0                          & : & x\le a     \\
                  \mbox{strictly increasing} & : & x\in (a,b) \\
                  1                          & : & x\ge b
            \end{array} \right .. \]
The function $v_1$ provides full information of $X$ inside the interval $[a,b]$,
and whether we are left from this interval or to the right of this interval. Next
we define an approximation of the indicator function of $(a,b)^c$ by the help
of $v_1$: We find a function $H_1\in C_\infty^p(\R)$ such that
      \[ H_1(v_1(x)) = \left \{ \begin{array}{rcl}
                  0                          & : & \mbox{ on } [a+ \vare, b-\vare] \\
                  1                          & : & \mbox{ on }  (a,b)^c            \\
                  \mbox{strictly decreasing} & : & \mbox{ on } [a, a+ \vare]       \\
                  \mbox{strictly increasing} & : & \mbox{ on } [b- \vare,b]        \\
            \end{array} \right ., \]
where $0<2\vare<b-a$. We have the identity
\[ F(T, X_T) = \left[F(T, x) \left(1 - H_1(v_1(x))\right) + F(T, x) H_1(v_1(x))\right] \bigr|_{x=X_T}. \]

Since $v_1$ is invertible on the interval $[a,b]$, we may write the first term as
\[ \widetilde{\Phi}(v_1(x)) = F(T, x) \left(1 - H_1(v_1(x))\right) \]
for a Borel function $\widetilde{\Phi}:[0,1]\to \R$. Now we let
\[ \Phi_1(x,y) := F(T,x) H_1(y) \]
so that, for $L=v_1(X_T)$,
\begin{align*}
     F(T,X_T)
& = \left[F(T, x) \left(1 - H_1(v_1(x))\right) + F(T, x) H_1(v_1(x))\right] \bigr|_{x=X_T} \\
& = \left[\widetilde{\Phi}(y) + F(T,X_T) H_1(y)\right] \bigr|_{y=L} \\
& = [\widetilde{\Phi}(L) + F(0,X_0) H_1(L)] + [F(T,X_T) H_1(L) - F(0,X_0) H_1(L)]   \\
& =: \Phi_0(L) + [F_1(T,X_T,L) - F_1(0,X_0,L)].
\end{align*}
This yields to
\[ \vph(t,x,y) =  \frac{\partial F}{\partial x}(t,x)\left[H_1(y) \one_{[0,T]}(t) + \one_{(T, 1)}(t)\right].\]
We get that
\begin{align*}
&    \hspace*{-3em} 
     \frac{1}{2\theta} \int_0^1 (1-t)^{1-\theta}
     \noo \sigma^2(X_t)
     \E_ t \vph_x(t,X_t,L)  \rrm^2_{L_2} dt \\
& = \frac{1}{2\theta} \int_0^T (1-t)^{1-\theta}
     \noo \sigma^2(X_t)
     \frac{\partial^2 F}{\partial x^2}(t,X_t) 
     \E_t H_1(L) \rrm^2_{L_2} dt \\
& +
     \frac{1}{2\theta} \int_T^1 (1-t)^{1-\theta}
     \noo \sigma^2(X_t)
     \frac{\partial^2 F}{\partial x^2}(t,X_t)  \rrm^2_{L_2} dt
\end{align*}
and
\begin{align*}
&    \hspace*{-3em} 
     \frac{1}{2\theta} \int_0^1 (1-t)^{1-\theta}
     \noo \sigma^2(X_t)
     \vph_x(t,X_t,L)  \rrm^2_{L_2} dt \\
& = \frac{1}{2\theta} \int_0^T (1-t)^{1-\theta}
     \noo \sigma^2(X_t)
     \frac{\partial^2 F}{\partial x^2}(t,X_t) 
     H_1(L) \rrm^2_{L_2} dt \\
& +
     \frac{1}{2\theta} \int_T^1 (1-t)^{1-\theta}
     \noo \sigma^2(X_t)
     \frac{\partial^2 F}{\partial x^2}(t,X_t)  \rrm^2_{L_2} dt.
\end{align*}


\subsection{Partial information at $T=1$}
Assume $K,C,\varepsilon>0$ such that
\[ 0<K<K+\varepsilon<C < C+\varepsilon \]
and assume $v_1,f\in C_\infty^p$ such that
$f(x)=1$ for $x\in (0,K]$, $f(x)=0$ for $x\ge K+\varepsilon$,
$v_1(x)=1$ for $x\in (0,C]$, $v_1(x)=0$ for $x\ge C+\varepsilon$,
and that $f,v_1$ take values in $[0,1]$. Let $X_t=e^{W_t-\frac{t}{2}}$
and $L:=v_1(X_1)$. Then
\[ f(x) = f(x) v_1(x)
   \sptext{1}{and}{1}
   \Phi_1(x,y):= f(x) y \]
so that
\[ \Phi_1(X_1,L) = f(X_1) L = f(X_1) \]
and
\[ \varphi(t,X_t,L) := \frac{\partial F}{\partial x}(t,X_t) L
   \sptext{1}{and}{1}
   \varphi_x(t,X_t,L)  := \frac{\partial^2 F}{\partial x^2}(t,X_t) L.\]
So, in the notation of \cref{sec:partial_information_T<1} we have $H_1(y)=y$.
The interpretation is that $f$ is an approximation of a binary
option with pay-off $\one_{(0,K]}$ and the insider function $v_1$ is an approximation
of the information whether the share price is smaller than or equal to $C$ or not.


\section{First running chaos and Kunita Watanabe projection}
\label{sec:FRC_KW}

\begin{theorem}\label{statement:FRC_KW}
Assume a time-net
$\tau_N=\{ 0=t_0< t_1 <\cdots < t_N =1\}$ and
$\xi = \Phi(X_{t_1},\ldots,X_{t_N})\in L_2$
with a Borel function $\Phi:\R^N\to \R$. Then one has
\[   \| H_{{\tau_N}} \xi-  P_{{\tau_N}}^X \xi   \|_{L_2}^2
    \le \sqrt{c_\eqref{eqn:statement:projection-comparison}}\, |\tau_N|^\frac{1}{2} \var(\xi)^\frac{1}{2}. \]
\end{theorem}
\medskip

It holds that for any $\xi \in L_2$ and $\tau_N=\{0=t_0<t_1<\cdots <t_N=1\}$
the projection $H_{\tau_N}$ can be written as

\begin{equation}\label{eq:proj_1st_chaos}
H_{\tau_N}\xi = \sum_{n = 1}^{N}\int_{t_{n-1}}^{t_n} \E_{t_{n-1}} k_t dW_t \mbox{ a.s.},
\end{equation}
where $k=(k_t)_{t\in [0,1]}$ is defined by the representation property \eqref{eq:representation_property} and
$(\E_{t_{n-1}} k_t)_{t\in (t_{n-1},t_n]}$ is obtained by
\[ (\E_{t_{n-1}} k_t)_{t\in (t_{n-1},t_n]} := \one_{(t_{n-1},t_n]} \E [ k| \cB([0,1])\otimes \cF_{t_{n-1}}].\]
In the next Lemma we use the notation $A=B\pm C$ for $B-C \le A \le B+C$.
We compare the Kunita Watanabe projection $P_{\tau_N}^X$ (see \eqref{eq:kunita_watanabe})
and the first running chaos $H_{\tau_N}$ (see \eqref{eq:proj_1st_chaos}).
\medskip

\begin{lemma} \label{projection-comparison}
For $\xi=\int_0^1 k_u dW_u \!\!\in L_2$ and
$\tau_N=\{0=t_0<t_1<\cdots< t_N=1\}$ one has
\begin{multline}\label{eqn:statement:projection-comparison}
     \| H_{{\tau_N}} \xi-  P_{{\tau_N}}^X \xi   \|_{L_2}^2 = \\
   \sum_{n=1}^N \int_{\tnm}^{\tn} \E
                   \bet     \E_{\tnm} k_s
                          - \frac{\E_{t_{n-1}}\int_{\tnm}^{\tn} k_udu}{\tn-\tnm}
                   \rag^2 \E \frac{\sigma(X_s)^2}{\sigma(X_{t_{n-1}})^2} ds
     \pm c_\eqref{eqn:statement:projection-comparison} \, |{\tau_N}| \var(\xi).
\end{multline}
where $c_\eqref{eqn:statement:projection-comparison}>0$ is an absolute constant, in particular not depending on $N$.
\end{lemma}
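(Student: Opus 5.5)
We write both projections in the form $\sum_n \int_{t_{n-1}}^{t_n} (\cdot)\,dW_s$ and compute the $L_2$-norm of the difference interval by interval, using the It\^o isometry and orthogonality of increments across different intervals. By \eqref{eq:proj_1st_chaos}, $H_{\tau_N}\xi=\sum_n\int_{\tnm}^{\tn}\E_{\tnm}k_s\,dW_s$. Since $X_{\tn}-X_{\tnm}=\int_{\tnm}^{\tn}\sigma(X_s)\,dW_s$, we get $P^X_{\tau_N}\xi=\sum_n\int_{\tnm}^{\tn} g_{n-1}\sigma(X_s)\,dW_s$. Both integrands on $(\tnm,\tn]$ are products of $\cF_{\tnm}$-measurable quantities and adapted processes, so
\[ \|H_{\tau_N}\xi-P^X_{\tau_N}\xi\|_{L_2}^2=\sum_{n=1}^N\int_{\tnm}^{\tn}\E\bigl|\E_{\tnm}k_s-g_{n-1}\sigma(X_s)\bigr|^2ds. \]

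The next step is to exploit the explicit structure of $\sigma$. For $X=W$ we have $\sigma\equiv1$. For $X=S$ we have $\sigma(X_s)=X_{\tnm}\,e^{(W_s-W_{\tnm})-(s-\tnm)/2}$, and the exponential factor is independent of $\cF_{\tnm}$ with mean $1$. Write $\sigma(X_s)=\sigma(X_{\tnm})\rho_s$ with $\rho_s:=\sigma(X_s)/\sigma(X_{\tnm})$. Then $\E_{\tnm}\rho_s^2=\E\rho_s^2=e^{s-\tnm}$, which gives the factor $\E\frac{\sigma(X_s)^2}{\sigma(X_{\tnm})^2}$ in the statement. Similarly, $\E_{\tnm}\int\sigma^2(X_t)\,dt=\sigma^2(X_{\tnm})\int_{\tnm}^{\tn}e^{t-\tnm}dt$. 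For the numerator of $g_{n-1}$ we write $\E_{\tnm}\int\sigma(X_t)k_t\,dt=\sigma(X_{\tnm})\bigl[\E_{\tnm}\int k_t\,dt+\E_{\tnm}\int(\rho_t-1)k_t\,dt\bigr]$. Hence
\[ g_{n-1}\sigma(X_{\tnm})=\frac{\E_{\tnm}\int_{\tnm}^{\tn}k_u\,du}{\tn-\tnm}+r_{n-1}, \]
where $r_{n-1}$ is a remainder collecting the factor $(\tn-\tnm)/\int e^{t-\tnm}dt=1+O(|\tau_N|)$ and the term involving $\rho_t-1$. Set $\bar k_n:=\E_{\tnm}\int k/(\tn-\tnm)$. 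The integrand is then $\E_{\tnm}k_s-(\bar k_n+r_{n-1})\rho_s$, which we rewrite as $(\E_{\tnm}k_s-\bar k_n)+\bar k_n(1-\rho_s)-r_{n-1}\rho_s$.

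Expanding the square, the main term $\E|\E_{\tnm}k_s-\bar k_n|^2$ must be matched with the claimed term, which carries the extra factor $\E\rho_s^2=1+O(|\tau_N|)$. The difference is therefore $O(|\tau_N|)\sum_n\int\E|\E_{\tnm}k_s|^2\,ds\le O(|\tau_N|)\,\E\int k^2=O(|\tau_N|)\var(\xi)$. The cross term between $\E_{\tnm}k_s-\bar k_n$ (which is $\cF_{\tnm}$-measurable) and $\bar k_n(1-\rho_s)$ vanishes, because $\E_{\tnm}(1-\rho_s)=0$. The term $\E|\bar k_n|^2\E(1-\rho_s)^2$ is $O(|\tau_N|)$ times $\E|\bar k_n|^2$; integrating over $s$ and using Jensen, $(\tn-\tnm)\E|\bar k_n|^2\le\E\int_{\tnm}^{\tn}k^2$, this is bounded by $c|\tau_N|\var(\xi)$.

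The main obstacle is the remainder $r_{n-1}$, more precisely $\E_{\tnm}\int(\rho_t-1)k_t\,dt/(\tn-\tnm)$. A naive Cauchy--Schwarz estimate gives $|\cdot|^2\le\frac{1}{\tn-\tnm}\E_{\tnm}\int(\rho_t-1)^2dt\cdot\frac{1}{\tn-\tnm}\E_{\tnm}\int k_t^2dt$. Since $\E_{\tnm}(\rho_t-1)^2=e^{t-\tnm}-1\le c|\tau_N|$, after multiplying by $\E\rho_s^2$, integrating over $(\tnm,\tn]$ and summing over $n$, we obtain $c|\tau_N|\,\E\int_0^1k^2=c|\tau_N|\var(\xi)$. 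The remaining cross terms involving $r_{n-1}$ are handled by Cauchy--Schwarz. This gives a square-root-type bound $\sqrt{A\cdot c|\tau_N|\var(\xi)}$, where $A\le c\var(\xi)$, and that would only be $O(|\tau_N|^{1/2})$. To get the exact $O(|\tau_N|)$ error I would instead first observe orthogonality: the $r_{n-1}\rho_s$ term can be combined with $\bar k_n$ before expanding, or one can note that $r_{n-1}$ is $\cF_{\tnm}$-measurable and pair it only with $\E_{\tnm}[(\E_{\tnm}k_s-\bar k_n)\rho_s]$, whose $s$-integral vanishes by the definition of $\bar k_n$ up to an $O(|\tau_N|)$ correction. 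If this exact cancellation fails, the fallback is the weaker square-root error. That weaker error would still suffice for \cref{statement:FRC_KW}, whose bound is of order $|\tau_N|^{1/2}$.
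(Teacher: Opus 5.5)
Your proposal is correct, and the step you leave open closes exactly, with no correction term. Put $M_s:=\E_{t_{n-1}}k_s-\bar k_n$ and $G_n:=g_{n-1}\sigma(X_{t_{n-1}})=\bar k_n+r_{n-1}$. Then $\E[M_s r_{n-1}\rho_s]=\E[M_s r_{n-1}]$ because $\E_{t_{n-1}}\rho_s=1$. Moreover $\int_{t_{n-1}}^{t_n}M_s\,ds=0$ a.s.\ by the very definition of $\bar k_n$, using Fubini for the version $\E[k\,|\,\mathcal{B}([0,1])\otimes\mathcal{F}_{t_{n-1}}]$.

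State this explicitly and drop the fallback. An $O(|\tau_N|^{1/2})\var(\xi)$ error would not prove the lemma as stated. With $h_n=t_n-t_{n-1}$, your expansion then gives the exact identity
\[
\int_{t_{n-1}}^{t_n}\E\bigl|\E_{t_{n-1}}k_s-G_n\rho_s\bigr|^2\,ds
=\int_{t_{n-1}}^{t_n}\E M_s^2\,ds+\bigl(e^{h_n}-1-h_n\bigr)\E G_n^2+h_n\,\E\bigl(G_n-\bar k_n\bigr)^2 .
\]
The last two terms are at most $c\,h_n\E\int_{t_{n-1}}^{t_n}k_u^2\,du$. This uses $\E G_n^2\le (e^{h_n}-1)^{-1}\E\int_{t_{n-1}}^{t_n}k_u^2\,du$ (conditional Cauchy--Schwarz) together with your bound on $r_{n-1}$. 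Inserting the weight $e^{s-t_{n-1}}=1+O(|\tau_N|)$ costs at most another $c|\tau_N|\var(\xi)$.

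The paper organizes the computation differently. It introduces auxiliary operators $\hat P^X_{\tau_N}$, with coefficient $\bar k_n/\sigma(X_{t_{n-1}})$, and $\hat H_{\tau_N}$, with integrand $\E_{t_{n-1}}k_s\,\rho_s$. Then $\hat H_{\tau_N}\xi-\hat P^X_{\tau_N}\xi$ has integrand $M_s\rho_s$, and its squared norm is exactly the main term, with the weight $\E\rho_s^2$ appearing naturally. The paper bounds $\|P^X_{\tau_N}\xi-\hat P^X_{\tau_N}\xi\|_{L_2}^2$ (your $r_{n-1}\rho_s$ piece) and $\|\hat H_{\tau_N}\xi-H_{\tau_N}\xi\|_{L_2}^2$ separately by $c|\tau_N|\var(\xi)$, and combines the three pieces by the triangle inequality.

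That route is modular, but a triangle inequality for norms only gives $\|H_{\tau_N}\xi-P^X_{\tau_N}\xi\|_{L_2}=\|\hat H_{\tau_N}\xi-\hat P^X_{\tau_N}\xi\|_{L_2}\pm c|\tau_N|^{1/2}\var(\xi)^{1/2}$. After squaring, this is an error of order $|\tau_N|^{1/2}\var(\xi)$, unless the cross terms are computed as well. Doing so needs precisely the cancellations you use: $\E_{t_{n-1}}\rho_s=1$ and $\int M_s\,ds=0$. Because you take $M_s$ rather than $M_s\rho_s$ as the main piece, every cross term either vanishes or is explicitly $O(h_n)$. Your expansion therefore yields the two-sided $O(|\tau_N|)\var(\xi)$ statement more directly.
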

\smallskip

\begin{proof} 
(a) We start by defining additionally 
\begin{align*}
\hat{P}_{\tau_N}^X \xi & := \sum_{n=1}^N \hat{g}_{n-1}^N (X_{t_n}-X_{t_{n-1}}) \sptext{1}{with}{1}
      \hat{g}_{n-1}^N
   := \frac{\E_{t_{n-1}} \int_{t_{n-1}}^{t_n} k_t dt}
           {(t_n-t_{n-1})\s(X_{t_{n-1}})}, \\
\hat{H}_{\tau_N}\xi & := \sum_{n=1}^N \frac{\int_{t_{n-1}}^{t_n} \left[\E_{t_{n-1}} k_s\right]\sigma(X_s) dW_s}{\sigma(X_{t_{n-1}})}.
\end{align*}
Then we have
\begin{multline*}
     \|P_{\tau_N}^X \xi - H_{\tau_N}\xi\|_{L_2}^2 \\
\leq \|P_{\tau_N}^X \xi - \hat{P}_{\tau_N}^X\xi\|_{L_2}^2
      +\|\hat{P}_{\tau_N}^X\xi - \hat{H}_{\tau_N}\xi\|_{L_2}^2
      +\|\hat{H}_{\tau_N}\xi - H_{\tau_N}\xi\|_{L_2}^2.
\end{multline*}
We will estimate the terms of the right hand side.
If $\sigma\equiv 1$, then the first and last term on the RHS coincide by definition.
To estimate the first term for $\sigma(x)=x$ we let
\begin{align*}
      \al_n^N
&:=  \left \| \frac{X_{t_n}-X_{t_{n-1}}}{X_{t_{n-1}}}\right \|_{L_2}, \\
     \beta_n^N
&:=  \sup_{t\in [t_{n-1},t_n]}
     \left \| \frac{\sigma(X_t)}{\sigma(X_{t_{n-1}})} \frac{1}{e^{t_n-t_{n-1}}-1}
                     - \frac{1}{t_n-t_{n-1}} \right \|_{L_2},
\end{align*}
and get that
\begin{align*}
&    \|P_{\tau_N}^X \xi - \hat{P}_{\tau_N}^X \xi \|_{L_2}^2 \\
& =  \sum_{n=1}^N \E
      \kla   g_{n-1}^N - \hat{g}_{n-1}^N
      \mer^2 (X_{t_n}-X_{t_{n-1}})^2 \\
& =  \sum_{n=1}^N (\alpha_n^N)^2 \E \kla (g_{n-1}^N - \hat{g}_{n-1}^N) \sigma(X_{t_{n-1}}) \mer^2 \\
& =  \sum_{n=1}^N (\al_n^N)^2 \E
      \left | \E_{t_{n-1}} \int_{t_{n-1}}^{t_n}
              \left [ \frac{X_t}{X_{t_{n-1}}} \frac{1}{e^{t_n-t_{n-1}}-1}
                      - \frac{1}{t_n-t_{n-1}} \right ] 
              k_t dt 
     \right |^2 \\
&\le \sum_{n=1}^N (\al_n^N)^2 \E
      \bigg | \int_{t_{n-1}}^{t_n} \be_n^N
      (\E_{t_{n-1}} |k_t|^2)^\frac{1}{2} dt \bigg |^2 \\
&\le \sup_n (\al_n^N)^2 (\be_n^N)^2 \sum_{n=1}^N \E
      \left | \int_{t_{n-1}}^{t_n}
              (\E_{t_{n-1}} |k_t|^2)^\frac{1}{2} dt 
      \right |^2 \\
&\le \sup_n (\al_n^N)^2 (\be_n^N)^2 \sum_{n=1}^N \E
      \left | \sqrt{t_n-t_{n-1}} \kla \int_{t_{n-1}}^{t_n}
              \E_{t_{n-1}} |k_t|^2 dt \mer^\frac{1}{2} 
      \right |^2 \\
&\le \sup_n (\al_n^N)^2 (\be_n^N)^2|{\tau_N}|
      \var(\xi),
\end{align*}
where we used that $Y_t := \left [ \frac{X_t}{X_{t_{n-1}}} \frac{1}{e^{t_n-t_{n-1}}-1}- \frac{1}{t_n-t_{n-1}} \right ]$
is independent from $\mathcal{F}_{t_{n-1}}$ to get 
\begin{multline*}
        \E_{t_{n-1}}(Y_tk_t)
   \leq \left(\mathbb{E}_{t_{n-1}}Y_t^2\right)^\frac12\left(\mathbb{E}_{t_{n-1}}|k_t|^2\right)^\frac12
    =    \left(\mathbb{E}Y_t^2\right)^\frac12\left(\mathbb{E}_{t_{n-1}}|k_t|^2\right)^\frac12 \\
   \leq \beta_n^N\left(\mathbb{E}_{t_{n-1}}|k_t|^2\right)^\frac12.
\end{multline*}
Finally, by a standard computation we see that
\[ \sup_{N\ge 1} \sup_{n=1,\ldots,N}
   \al_n^N \be_n^N < \infty.\]

(b) Next we observe that 
\begin{align*}
&    \| \hat{H}_{\tau_N}\xi - \hat{P}_{\tau_N}\xi \|_{L_2}^2 \\
& =  \mathbb{E}\left(\sum_{n=1}^N \left(\int_{t_{n-1}}^{t_n}\frac{\left[\mathbb{E}_{t_{n-1}} k_s\right]}{\sigma(X_{t_{n-1}})}\sigma(X_s)dW_s - \int_{t_{n-1}}^{t_n}\hat{g}^N_{n-1}\sigma(X_s)dW_s\right)\right)^2  \\
& =  \sum_{n=1}^N \int_{\tnm}^{\tn} \E
        \bet   [ \E_{\tnm} k_s]
                \frac{1}{\sigma(X_{t_{n-1}})}
        - \hat{g}_{n-1}^N \rag^2 \sigma(X_s)^2 ds \\
& =  \sum_{n=1}^N \int_{\tnm}^{\tn} \E
        \bet   [ \E_{\tnm} k_s]
                \frac{1}{\sigma(X_{t_{n-1}})}
        - \frac{\int_{\tnm}^{\tn} \E_{t_{n-1}}k_udu}
               {(\tn-\tnm)\s(X_{t_{n-1}})}
       \rag^2 \sigma(X_s)^2 ds \\
& =  \sum_{n=1}^N \int_{\tnm}^{\tn} \E
        \bet   [ \E_{\tnm} k_s]
               - \frac{\int_{\tnm}^{\tn} \E_{t_{n-1}}k_udu}
               {\tn-\tnm}
       \rag^2 \frac{\sigma(X_s)^2}{\sigma(X_{t_{n-1}})^2} ds \\
& =  \sum_{n=1}^N \int_{\tnm}^{\tn} \E
        \bet   [ \E_{\tnm} k_s]
               - \frac{\int_{\tnm}^{\tn} \E_{t_{n-1}}k_udu}
               {\tn-\tnm}
       \rag^2 \E \frac{\sigma(X_s)^2}{\sigma(X_{t_{n-1}})^2} ds.
\end{align*}

(c) Finally, again we assume that $\sigma(x)=x$ and get that
\equa
      \| \hat{H}_{\tau_N}Z - H_{\tau_N}Z \|_{L_2}^2
& = & 
      \sum_{n=1}^N \E \int_{t_{n-1}}^{t_n} 
      \bet \frac{[\E_{t_{n-1}} k_s]}{X_{t_{n-1}}} X_s -
           [\E_{t_{n-1}} k_s] \rag^2 ds \\
& = & \sum_{n=1}^N \int_{t_{n-1}}^{t_n} 
      \E \bet [\E_{t_{n-1}} k_s] \rag^2 
      \E \bet \frac{X_s}{X_{t_{n-1}}} - 1 \rag^2 ds \\
& = & \sum_{n=1}^N \int_{t_{n-1}}^{t_n} 
      \E \bet [\E_{t_{n-1}} k_s] \rag^2 
      (e^{s-t_{n-1}}-1) ds \\
&\le& e^{|\tau_N|} |{\tau_N}| \var(\xi),
\tion
and the statement follows. For the last inequality we use the mean value theorem to get the inequality
$e^{s - t_{n-1}} -1 = (s - t_{n-1})e^{s'} \leq |\tau_N|e^{|\tau_N|}$ for some $s' \in [0, s-t_{n-1}]$.
\end{proof}
\medskip

\begin{lemma}\label{statement:integrand_piecewise_martinagele}
For $0=t_0<t_1<\cdots< t_N=1$ and
$\xi=\Phi(X_{t_1},\ldots,X_{t_N})\in L_2$ with a Borel function $\Phi:\R^N\to \R$
there is a representation
\[ \xi = \E \xi + \int_0^1 k_u dW_u \mbox{ a.s.} \]
such that $(k_u)_{u\in [t_{n-1},t_n)} \subseteq L_2$  are continuous  $(\cF_u)_{u\in [t_{n-1},t_n)}$-martingales.
\end{lemma}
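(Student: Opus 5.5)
We build $k$ backwards through the net by conditioning and the Markov property. For $n=N,N-1,\ldots,1$ we define Borel functions $\Phi_n:\R^{n}\to\R$ by
\[ \Phi_N:=\Phi, \qquad \Phi_{n-1}(x_1,\ldots,x_{n-1}) := \E\big(\Phi_n(x_1,\ldots,x_{n-1},X_{t_n}) \,\big|\, X_{t_{n-1}}=x_{n-1}\big), \]
with $\Phi_0:=\E\xi$. Thus $\Phi_n(X_{t_1},\ldots,X_{t_n})=\E_{t_n}\xi$ a.s. Since $X$ is a time-homogeneous Markov process with explicit Gaussian (respectively log-normal) transition density, on $[t_{n-1},t_n]$ we set
\[ G_n(u,x_1,\ldots,x_{n-1},x) := \E\big(\Phi_n(x_1,\ldots,x_{n-1},X_{t_n})\,\big|\,X_u=x\big). \]
Then $M_u:=\E_u\xi = G_n(u,X_{t_1},\ldots,X_{t_{n-1}},X_u)$ for $u\in[t_{n-1},t_n]$.

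For fixed $(x_1,\ldots,x_{n-1})$, the function $(u,x)\mapsto G_n(u,\ldots,x)$ is $C^{1,2}$ on $[t_{n-1},t_n)\times I$ and solves the backward equation \eqref{diffeq-F}. This follows from differentiating the heat kernel under the integral; integrability is guaranteed because $\Phi_n(x_1,\ldots,x_{n-1},X_{t_n})\in L_2$ for a.e. relevant $(x_1,\ldots,x_{n-1})$, which one obtains by conditioning $\xi$. Itô's formula on $[t_{n-1},t_n-\vare]$ then gives
\[ M_u - M_{t_{n-1}} = \int_{t_{n-1}}^u \frac{\partial G_n}{\partial x}(s,X_{t_1},\ldots,X_{t_{n-1}},X_s)\,\sigma(X_s)\,dW_s. \]
We therefore define $k_u:=\sigma(X_u)\frac{\partial G_n}{\partial x}(u,X_{t_1},\ldots,X_{t_{n-1}},X_u)$ for $u\in[t_{n-1},t_n)$. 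Letting $\vare\downarrow0$ and using $L_2$-martingale convergence, together with uniqueness of the integrand in the representation property \eqref{eq:representation_property}, identifies $k$ with the Clark–Ocone integrand, and we get $\xi=\E\xi+\int_0^1k_u\,dW_u$ with $\E\int_0^1 k_u^2\,du=\var(\xi)<\infty$.

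It remains to show that $(k_u)_{u\in[t_{n-1},t_n)}$ is a continuous $L_2$-martingale. Continuity in $u$ is immediate from the smoothness of $G_n$ in the interior. For the martingale property, the key step is to rewrite $k_u$ as a conditional expectation of a fixed $\cF_{t_n}$-measurable variable, using an explicit differentiation of the kernel:

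\textbf{Case $X=W$.} Here
\[ \frac{\partial G_n}{\partial x}(u,\ldots,x)=\E\Big(\Phi_n(\ldots,X_{t_n})\,\frac{W_{t_n}-W_u}{t_n-u}\,\Big|\,W_u=x\Big), \]
which is not obviously a martingale in this form. We use instead that $\frac{\partial G_n}{\partial x}$ itself solves the backward heat equation, since $\partial_x$ commutes with the heat operator. Equivalently, by the semigroup property, for $t_{n-1}\le u\le v<t_n$ we have $\frac{\partial G_n}{\partial x}(u,x)=\E\big(\frac{\partial G_n}{\partial x}(v,X_v)\,\big|\,X_u=x\big)$, obtained by differentiating $G_n(u,x)=\E(G_n(v,X_v)\mid X_u=x)$ in $x$ under translation invariance.

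\textbf{Case $X=S$.} The same argument applies to $x\,\partial_xG_n$ in log-coordinates: $y\mapsto G_n(u,e^{y})$ satisfies a constant-coefficient equation, so its $y$-derivative, which is $x\partial_xG_n=k_u$, is again space-time harmonic.

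In both cases this yields $\E_u k_v=k_u$ for $u\le v<t_n$. Square integrability of $k_u$ for each $u<t_n$ follows from Gaussian kernel bounds of the form $\|k_u\|_{L_2}\le c(t_n-u)^{-1/2}\|\xi\|_{L_2}$.

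The main obstacle is the rigorous justification of differentiating under the integral sign and of the commutation $\partial_x\E=\E\partial_x$ when $\Phi$ is merely Borel with $\xi\in L_2$. The plan is to handle this through the smoothing of the transition kernel for $u<t_n$ strictly, which is why the martingale property is claimed only on the half-open intervals $[t_{n-1},t_n)$.
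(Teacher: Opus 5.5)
Your proof is correct, but it takes a different route from the paper. The paper argues on the chaos side. By the cited result of Baum and Geiss (Proposition~6.5 there), the symmetric kernels $h_m$ of $\xi$ can be chosen constant on the cuboids generated by the net. Hence, for $u\in(t_{n-1},t_n]$, the adapted integrand $k_u=\sum_m m\, I_{m-1}\big(h_m(t_n,\cdot)\one_{[0,u]}^{\otimes(m-1)}\big)$ depends on $u$ only through the truncation $\one_{[0,u]}^{\otimes(m-1)}$. That truncation is exactly how $\E_u$ acts on chaos, so $\E_u k_v=k_u$ is immediate. The paper does not spell out the rest, but it follows from general facts: $k_u\in L_2$ for every $u<t_n$ because $u\mapsto\|k_u\|_{L_2}$ is nondecreasing on the cell and finite for a.e.\ $u$, and path continuity holds because Brownian martingales have continuous versions. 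You instead use the Markov property and the backward equation. This gives the explicit version $k_u=\sigma(X_u)\partial_xG_n(u,X_{t_1},\ldots,X_{t_{n-1}},X_u)$, and you derive the martingale property from space--time harmonicity of $\partial_xG_n$ (of $\partial_y$ in log-coordinates for $X=S$). Your route needs no structural result on chaos kernels. It produces a concrete continuous version and the quantitative bound $\|k_u\|_{L_2}\le(t_n-u)^{-1/2}\|\xi\|_{L_2}$, at the price of kernel estimates. Those estimates are routine. Let $p_r$ denote the Gaussian transition density. For a.e.\ $x'=(x_1,\ldots,x_{n-1})$ one has $\int|\Phi_n(x',y)|^2p_{t_n-t_{n-1}}(x_{n-1},y)\,dy<\infty$. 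Apply Cauchy--Schwarz against $p_{t_n-u}(x,y)^2/p_{t_n-t_{n-1}}(x_{n-1},y)$, whose exponent has $y^2$-coefficient at most $-1/(2(t_n-t_{n-1}))$ for all $u\in[t_{n-1},t_n)$. This gives locally uniform domination in $(u,x)$ for $G_n$ and all its $x$-derivatives, up to and including $u=t_{n-1}$. Finally, your separate treatment of $X=S$ is unnecessary. The variable $\xi$ is also a Borel function of $(W_{t_1},\ldots,W_{t_N})$, and the $dW$-integrand does not depend on how $\xi$ is written, so the case $X=W$ already covers it.
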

\medskip

\begin{proof}
We consider the chaos expansion of  $Z=\Phi(X_{t_1},\ldots,X_{t_N})\in L_2$ given by
\[  \xi = \E \xi +  \sum_{n=1}^\infty I_n (h_n) \mbox{ a.s.}  \]
such that the $(h_n)_{n\ge 1}$ are symmetric measurable kernels
$h_n:(0,1]^n \to \R$ such that $\int_0^1 \cdots \int_0^1 |h_n(t_1,\ldots,t_n)|^2 d t_n \cdots d t_1 < \infty$.
By \cite[Proposition 6.5]{Baum-Geiss}   each  function $h_n$  is constant on the cuboids
\[  (t_{k_1},t_{k_1+1}] \times ... \times(t_{k_n},t_{k_n+1}]  \quad \text{for} \,\, t_{k_j}  \in \{1,...,N\}. \]
Since
\[ I_n(h_n) := n! \int_0^1 \int_0^{t_n} \dots \int_0^{t_2} h_n(t_1, \dots, t_n) dW_{t_1} \dots dW_{t_n}.
   \]
we have
\[ \sum_{n=1}^\infty I_n (h_n)  = \int_0^1  \sum_{n=1}^\infty n I_{n-1} ( h_n (u, \cdot)) dW_u \mbox{ a.s.} \]
Then
$$k_u =  \sum_{n=1}^\infty n I_{n-1} ( h_n (u, \cdot))   $$
is constant on $(t_{k}, t_{k +1}] $  for $k =0,...,n-1.$
\end{proof}
\smallskip

\begin{proof}[Proof of Theorem \ref{statement:FRC_KW}]
The statement follows from \cref{projection-comparison} and
 \cref{statement:integrand_piecewise_martinagele}.
\end{proof}
\bigskip

\begin{lemma}\label{lemma:Z-Z_N_variance_new}
Assume that $Z$ has the admissible representation \eqref{eqn:definition:admissible_representation_Z}. 
Let $(\tau_N)_{N>M}$ be a sequence of times nets $0=t_{0,N} < t_{1,N} < \cdots < t_{N,N}=1$
such that $\{s_1,\ldots,s_M\} \subseteq \tau_N$ and  $\lim_{N\to \infty} |\tau_N| =0$.
Then
\[ \lim_{N\to \infty} \|Z-Z_{\tau_N}\|_{L_2} = 0. \]
\end{lemma}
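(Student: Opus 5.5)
The idea is to write both $Z$ and $Z_{\tau_N}$ as integrals over $[0,1]$ and compare the integrands pathwise, treating each interval $(s_{m-1},s_m]$ separately and then the last interval $(T,1]$. Since $\{s_1,\ldots,s_M\}\subseteq\tau_N$, every $\tau_N$-interval $(t_{n-1},t_n]$ lies inside one of the intervals $(s_{m-1},s_m]$ or $(T,1]$. Hence
\[ Z-Z_{\tau_N} = \int_0^1 \bigl[\vph(t,X_t,y)-\vph(\underline{t}_N,X_{\underline{t}_N},y)\bigr]\sigma(X_t)\,dW_t\Bigr|_{y=L}, \]
where $\underline{t}_N:=t_{n-1}$ for $t\in(t_{n-1},t_n]$. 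On each interval the function $\vph(\cdot,\cdot,y)$ is the single smooth function $\partial_x F_m$. The main difficulty is that $y=L$ is substituted only after integration, so the integrand is anticipating and the Itô isometry cannot be used directly.

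For the intervals $(s_{m-1},s_m]$ I would handle this by working with deterministic $y$ and then using uniform-in-$y$ estimates. Since $\Phi_m\in C^\infty_p(I\times\R^K)$, the function $F_m(t,x,y)$ is smooth on $[s_{m-1},s_m]\times I\times\R^K$, and all its derivatives in $t,x,y$ have polynomial growth uniformly in $t$. This follows by differentiating under the Gaussian integral: for $X=S$ one writes $X_{s_m}=x\,e^{(W_{s_m}-W_t)-(s_m-t)/2}$, and time derivatives are converted to $x$-derivatives through \eqref{diffeq-F}. For fixed $y$, set $D_N(y):=\int_{s_{m-1}}^{s_m}[\vph(t,X_t,y)-\vph(\underline{t}_N,X_{\underline{t}_N},y)]\sigma(X_t)dW_t$. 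Itô's formula applied to $\vph$, together with the BDG inequality, gives
\[ \E|D_N(y)|^p + \E|\partial_{y_j}D_N(y)|^p \le C_p\,(1+|y|)^{q}\,|\tau_N|^{p/2}. \]
To pass to $y=L$, I would use a Sobolev embedding in $y$ (Garsia--Rodemich--Rumsey, or Kolmogorov's criterion with $p>K$) on balls. This yields $\E\sup_{|y|\le R}|D_N(y)|^2\le C(1+R)^{q'}|\tau_N|^{1/2}$ or better. Splitting on the events $\{|L|\le R\}$ and $\{|L|>R\}$, and noting that $L$ has moments of all orders because $v_k\in C^\infty_p$, gives $\|D_N(L)\|_{L_2}\to 0$. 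Alternatively, one can avoid the embedding by expanding in $y$ in the spirit of a Skorohod/Malliavin representation, but the sup-estimate route is simpler to write.

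On $(T,1]$ there is no $y$-dependence, so the integrand is adapted and the Itô isometry applies directly:
\[ \Bigl\|\int_T^1[\vph(t,X_t)-\vph(\underline{t}_N,X_{\underline{t}_N})]\sigma(X_t)dW_t\Bigr\|_{L_2}^2=\int_T^1\E|\vph(t,X_t)-\vph(\underline{t}_N,X_{\underline{t}_N})|^2\sigma^2(X_t)\,dt. \]
The map $t\mapsto \vph(t,X_t)\sigma(X_t)$ is continuous on $(T,1)$, and $\E|\vph(t,X_t)\sigma(X_t)|^2\le \E|\sigma(X_1)\Phi_{M+1}(X_1)|^2/(1-t)$-type bounds are not integrable near $1$. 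So near $t=1$ I would instead use that $(\vph(t,X_t)\sigma(X_t))$ is, up to the factor $\sigma$, a martingale in $t$ for fixed starting point (since $\partial_xF_{M+1}\,\sigma$ satisfies a backward PDE). I would fix $\delta>0$, and on $(T,1-\delta]$ apply dominated convergence using the smoothness of $F_{M+1}$ there. On $(1-\delta,1]$ I would bound both pieces by $\E\int_{1-\delta}^1|\vph\sigma|^2dt$ and by the analogous Riemann-sum term. By orthogonality in $L_2$ the latter is controlled by the conditional variance $\|\Phi_{M+1}(X_1)-\E_{1-\delta'}\Phi_{M+1}(X_1)\|^2$, which tends to $0$ as $\delta\to0$. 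The condition on $\theta$ in \cref{assumptionA} is not needed for mere convergence.

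The main obstacle is the anticipating substitution $y=L$ on $[0,T]$. Obtaining the uniform-in-$y$ moment bounds with polynomial weights, so that the sup over $y$ can be taken and the substitution justified, is the step requiring the most care.
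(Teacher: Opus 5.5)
Your argument is correct, but it takes a different route from the paper. The paper works in Skorohod calculus. Using the integration-by-parts formula (\cref{prop:skorohod_IBP}), it rewrites $Z_{\tau_N}$ as a Skorohod integral of $\vph(l_N(t),X_{l_N(t)},L)\sigma(X_t)$ plus a trace term involving $\nabla_y\vph\cdot D_tL$. This gives the decomposition \eqref{Z-Z-N}. The Skorohod part on $[0,T]$ is controlled via the covariance formula (\cref{prop:skorohod_covariance}) and dominated convergence, the trace term is treated similarly, and \cite[Theorem 3.2]{Gei19} is cited for $[T,1]$.

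You never leave It\^o calculus. For fixed $y$ the error is an honest It\^o integral, and you prove $|\tau_N|^{p/2}$-moment bounds with polynomial weights for it and for its $y$-gradient. This is legitimate because on $[0,T]$ the field has the pathwise smooth version $\sum_m[F_m(s_m,X_{s_m},y)-F_m(s_{m-1},X_{s_{m-1}},y)]$ minus a finite Riemann sum, and its $\partial_{y_j}$ has the same form with $\partial_{y_j}F_m$. A Sobolev/Kolmogorov embedding then justifies substituting $y=L$. Your route needs only moments of $L$: no Malliavin differentiability of $L$, and no check that anticipating integrands lie in $\mathbb{L}_{1,2}$. It also yields an explicit rate $O(|\tau_N|^{1/2})$ on $[0,T]$, and a self-contained treatment of $(T,1]$ confirming that the $\theta$-condition plays no role. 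The paper's route, in turn, produces the decomposition \eqref{Z-Z-N}, which it reuses for the exact asymptotic constants later.

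Two details need tightening.

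First, $\vph$ is defined left-continuously, with $\vph(0,\cdot)\equiv 0$ and $\vph(s_{m-1},\cdot)=\partial_xF_{m-1}(s_{m-1},\cdot)$. So on the first $\tau_N$-interval after each of $0,s_1,\dots,s_M$, the Riemann coefficient is not $\partial_xF_m$ evaluated at the left endpoint. In particular, on the first interval after $T$ the coefficient is $\partial_xF_M(T,X_T,L)$, so your claim that the integrand on $(T,1]$ is $y$-free is not literally true. These at most $M+1$ extra terms have the form $c\,(X_{t_n}-X_{t_{n-1}})$, with $c$ polynomially bounded in $(X_{t_{n-1}},L)$. By H\"older they are $O(|\tau_N|^{1/2})$ in $L_2$, so nothing breaks.

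Second, the event $\{|L|>R\}$ needs a bound uniform in $N$, which you left implicit. Sum over shells $\{k\le |L|<k+1\}$ and apply Cauchy--Schwarz to each. Combine your polynomial-in-radius bound for $\E\sup_{|y|\le k+1}|D_N(y)|^4$ with $\PP(|L|\ge k)\le C_m k^{-m}$. This gives $\E|D_N(L)|^2\le C|\tau_N|$ directly.
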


\begin{proof}
To shorten the notation we will write
\begin{equation} \label{short-phi}
 \vph(t):=\vph(t, X_t, L)
 \sptext{1}{and use}{1}
  (\delta_N \Psi) (u):= \Psi(u) - \Psi(l_N(u))
\end{equation}
for a function $\Psi: [0,1] \to \R$, where
\begin{equation} \label{interval}
l_N(t)          := \sum_{n=1}^N \one_{(t_{n-1,N},\,t_{n,N}]}(t) t_{n-1,N}.
\end{equation}
In our setting it holds according to \cref{prop:skorohod_IBP} that, a.s.,
\begin{align*}
    Z_{\tau_N}
& = \sum_{n=1}^N  \vph(t_{n-1,N}, X_{t_{n-1,N}}, L) (X_{t_{n,N}}- X_{t_{n-1,N}}) \\
& = \int_0^1  \vph(l_N(t),X_{l_N(t)}, L)\sigma(X_t)  \de W_t \\
&\quad  + \int_0^1 \sigma(X_t) \nabla_y\vph(l_N(t),X_{l_N(t)}, L) \cdot D_t L \, dt.
\end{align*}
Consequently, a.s.,
\begin{align} \label{Z-Z-N}
      Z-Z_{\tau_N}
& =   \int_0^1  (\delta_N\vph)(t) \sigma(X_t)  \de W_t
     + \int_0^1 \sigma(X_t)
          (\delta_N \nabla_y\vph)(t) \cdot D_t L dt  \notag \\
& =   \int_0^T  (\delta_N\vph)(t) \sigma(X_t)  \de W_t
     + \int_0^T \sigma(X_t)
          (\delta_N \nabla_y\vph)(t) \cdot D_t L dt    \\
& +  \int_T^1  (\delta_N\vph)(t) \sigma(X_t)  d W_t. \notag
\end{align}
In case of $T<1$ for the last term we know by
\cite[Theorem 3.2 and its proof]{Gei19} that
\[ \lim_{N\to \infty} \E \left [  \int_T^1  (\delta_N\vph)(t) \sigma(X_t)  d W_t \right ]^2 =0.\]
For the first term we use \cref{prop:skorohod_covariance} and get that
\begin{align*}
&      \E \left |
      \int_0^T
      (\delta_N\vph)(t) \sigma(X_t)  \de W_t \right |^2 \\
= & \E \int_0^T \bet
      \bS(X_t)(\delta_N \vph)(t) \rag^2 dt \\
&      + 2 \E \int_0^T \int_0^t
                           \left [ D_t \Big ( \bS(X_s)(\delta_N\vph)(s) \Big ) \right ]
                           \left [ D_s \Big ( \bS(X_t)(\delta_N\vph)(t) \Big ) \right ]
                        ds dt \\
= & \E \int_0^T \bet \bS(X_t)(\delta_N \vph)(t) \rag^2 dt
   + 2 \E \int_0^T \int_0^t \bS(X_s) \left [ \nabla_y (\delta_N\vph)(s) \cdot D_t L\right ]  \\
& \! \times \!\Big [( D_s  \bS(X_t))(\delta_N\vph)(t) \\
& \quad + \bS(X_t)
    [(\delta_N\vph_x)(t) D_s X_t + \vph_x(l_N(t)) (D_s X_t - D_s X_{l_N(t)})] \\
& \quad + \bS(X_t) (\nabla_y (\delta_N\vph) (t) \cdot D_s L) \Big ] ds dt.
\end{align*}
Because of the regularity properties of $\vph$ on all intervals $(s_{m-1},s_m)$
with $s_m\le T$  we may use dominated convergence to show convergence to zero as $|\tau_N| \to 0$.
Moreover, for the second term in \eqref{Z-Z-N} we have
\[ \E \bet \int_0^T \sigma(X_t)(\delta_N \nabla_y\vph)(t)\cdot D_t Ldt \rag^2
   \to_N 0 \]
as well, which can be shown via a similar argument.
\end{proof}


\section{Proof of main results}
\label{sec:proof_main_results}


\subsection{A technical lemma}

The following lemma uses ideas from \cite[Proof of Lemma 5.3]{Geiss:Toivola:2009}:

\begin{lemma}\label{lemma:convergence_to_diagonal}
Assume $\theta \in (0,1]$ and for $N>M$ Borel measurable $A_N:[0,1]\times[0,1] \to [-C,C]$ for some $C>0$
and an $A:[0,1] \to \R$  such that for the nets $\sigma_N^{\theta}$ from \cref{def:time_nets} the following conditions
are satisfied:
\begin{enumerate}[{\rm (1)}]
\item  For a.a. $s\in (0,1)$  it holds
       $\lim\limits_{N\to\infty} \sup\limits_{l_N(s) < t<s} |A^N(t,s) -A^N(s,s)| =0$.
\item $\lim_{N \to \infty} A^N(s,s) = A(s)$ for a.a. $s\in (0,1),$
\item \label{item:3:lemma:convergence_to_diagonal}
      $A$ is left-continuous with right limits and has at most a finite number of discontinuities.
\end{enumerate}
Then one has that
\[   \lim_{N\to \infty} N \int_0^1\int_{l_N(s)}^s A^N(t,s) dtds
   = \frac{1}{2\theta} \int_0^1 (1-s)^{1-\theta} A(s)  ds. \]
\end{lemma}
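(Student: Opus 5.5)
Our plan is to reduce to a sum over the mesh intervals and to use the explicit structure of the nets $\sigma_N^\theta$. The first step is to write, for each fixed $s$ with $l_N(s)<s$,
\[ \int_{l_N(s)}^s A^N(t,s)\,dt = (s-l_N(s))A^N(s,s) + \int_{l_N(s)}^s \big(A^N(t,s)-A^N(s,s)\big)dt . \]
The second term is bounded in absolute value by $(s-l_N(s))\sup_{l_N(s)<t<s}|A^N(t,s)-A^N(s,s)|$.

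Next we need a uniform bound $N(s-l_N(s))\le c$ for all $s\in(0,1)$ and all $N$. This follows from the mesh of $\sigma_N^\theta$. The net is a refinement of $\tau_R^\theta$ with $R=N-M$ (up to removing coincident points), and the mesh of $\tau_R^\theta$ is at most $c/R$, since $x\mapsto 1-(1-x)^{1/\theta}$ is Lipschitz on $[0,1]$ because $1/\theta\ge1$. Given this bound, condition (1) together with dominated convergence (the domination being $2Cc$) shows that
\[ N\int_0^1\int_{l_N(s)}^s \big(A^N(t,s)-A^N(s,s)\big)\,dt\,ds \to 0 . \]
It therefore remains to prove
\[ \lim_N \int_0^1 N(s-l_N(s))A^N(s,s)\,ds = \frac1{2\theta}\int_0^1(1-s)^{1-\theta}A(s)\,ds . \]

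The key step is that $g_N(s):=N(s-l_N(s))$ converges weakly in $L_\infty$, in the weak-$*$ sense, to $\frac1{2\theta}(1-s)^{1-\theta}$. We first treat the nets $\tau_R^\theta$. On $(t_{n-1},t_n]$ the integral of $N(s-t_{n-1})$ equals $\frac N2 (t_n-t_{n-1})^2$. By the mean value theorem, $t_n-t_{n-1}=\frac1{\theta R}(1-\xi_n)^{1-\theta}$ for some $\xi_n$ in the interval. Hence, for a continuous test function $h$,
\[ \sum_n h(t_{n-1})\tfrac N2 (t_n-t_{n-1})^2 \approx \sum_n h(\xi_n)\tfrac{N}{2\theta R}(1-\xi_n)^{1-\theta}(t_n-t_{n-1}) , \]
which is a Riemann sum converging to $\frac1{2\theta}\int h(s)(1-s)^{1-\theta}ds$, since $N/R\to1$. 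Adding the $M$ fixed points $s_1,\dots,s_M$ changes $g_N$ only on at most $M$ intervals of length $O(1/N)$. On these intervals $g_N$ is bounded, so the change contributes $O(1/N)$ to any integral. The convergence extends from continuous $h$ to indicators of intervals by sandwiching, using the uniform bound on $g_N$.

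Finally we combine the weak convergence with condition (2):
\[ \int g_N A^N(s,s)\,ds = \int g_N A\,ds + \int g_N\big(A^N(s,s)-A(s)\big)ds . \]
The last integral tends to $0$ by dominated convergence. Note that $|A|\le C$, being an a.e.\ limit of functions bounded by $C$. By condition (3), $A$ is bounded, regulated, and has finitely many jumps. Hence $A$ is a uniform limit of step functions, or equivalently is Riemann integrable, and the weak convergence on intervals gives $\int g_N A\to\frac1{2\theta}\int(1-s)^{1-\theta}A$.

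We expect the main obstacle to be the Riemann-sum step for $g_N$ near $s=1$ when $\theta<1$. There the intervals are longer relative to the uniform scale, and the weight $(1-s)^{1-\theta}$ must be tracked through the mean value theorem. The uniform mesh bound $c/N$ together with continuity of $(1-s)^{1-\theta}$ on $[0,1]$ should make the error controllable.
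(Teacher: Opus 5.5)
Your proposal is correct and follows essentially the same route as the paper: the same split via conditions (1) and (2) with dominated convergence and the bound $0\le N(s-l_N(s))\le c_\theta$, and the same mean-value-theorem computation showing that $N(s-l_N(s))\,ds$ converges weakly to $\frac{1}{2\theta}(1-s)^{1-\theta}\,ds$, with the $M$ insider points costing only $O(1/N)$. The only difference is that the paper tests directly against the distribution functions $\one_{[0,t]}$ rather than against continuous $h$. Two small remarks: the obstacle you anticipate near $s=1$ does not arise, since for $\theta<1$ the intervals there are shorter rather than longer and $(1-s)^{1-\theta}$ is continuous on $[0,1]$; and because $\sup_N\|g_N\|_\infty<\infty$, your weak-$*$ convergence against continuous $h$ extends by $L_1$-density to every bounded measurable $A$, so the step-function approximation (and with it hypothesis (3)) is not actually needed.
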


\begin{proof}
(a) Denote by $\Xi_\theta(t):[0,1] \to [0,1]$ the map $\Xi_\theta(t) = 1 - (1-t)^\frac{1}{\theta}$ so that 
$\Xi_\theta(\frac{i}{R}) = t_{i, R}^\theta$.
For $t\in [0,1]$ and $R\in \N$ consider the distribution functions
\[ \mu_R^\theta(t) := \sum_{i = 1}^R R\int_{t_{i-1, R}^\theta \wedge t}^{t_{i, N}^\theta \wedge t}\left(s -t_{i-1, R}^\theta\right)ds
   \sptext{1}{and}{1} 
   \mu^\theta(t) := \frac{1}{2\theta}\int_0^t (1-s)^{1-\theta} ds.\]
For $t\in [0,1]$ and $r_{i,R}^\theta:= t_{i,R}^\theta \wedge t$ we get 
some
$\tilde r_{i,R}^\theta \in  \left [ r_{i-1,R}^\theta,r_{i,R}^\theta \right ]$ 
such that 
\begin{align*}\label{eq:mu_N_theta_1}
    \mu_R^\theta(t) 
& = \frac{R}{2} \sum_{i = 1}^R \left(r_{i,R}^\theta - r_{i-1,R}^\theta \right)^2 \\
& = \frac{R}{2} \sum_{i = 1}^R \left(r_{i,R}^\theta - r_{i-1,R}^\theta \right) 
    \left[ \Xi_\theta\left(\Xi_\theta^{-1}(r_{i,R}^\theta) \right ) - \Xi_\theta\left(\Xi_\theta^{-1}(r_{i-1,R}^\theta) \right ) \right]\\
& = \frac{R}{2} \sum_{i = 1}^R \left(r_{i,R}^\theta - r_{i-1,R}^\theta \right) 
    \Xi_\theta'( \Xi_\theta^{-1}(\tilde r_{i,R}^\theta))  \left [ \Xi_\theta^{-1}(r_{i,R}^\theta) - \Xi_\theta^{-1}(r_{i-1,R}^\theta) \right ] \\
& = \frac{1}{2\theta} \sum_{i = 1}^R \left(r_{i,R}^\theta - r_{i-1,R}^\theta \right) 
    (1-\tilde r_{i,R}^\theta)^{1-\theta} \left [ R  \left [ \Xi_\theta^{-1}(r_{i,R}^\theta) - \Xi_\theta^{-1}(r_{i-1,R}^\theta)
    \right ] \right ],
\end{align*}
where we use the mean value theorem to determine $\tilde r_{i,R}^\theta$. 
Now we observe that for $t\le t_{i-1,R}^\theta$ one has
$\Xi_\theta^{-1}(r_{i,R}^\theta) - \Xi_\theta^{-1}(r_{i-1,R}^\theta)=0$ and for 
$t\ge t_{i,R}^\theta$ one has
$N  \left [ \Xi_\theta^{-1}(r_{i,R}^\theta) - \Xi_\theta^{-1}(r_{i-1,R}^\theta)\right ] =1$. This implies that 
\[\lim_{R\to \infty} \mu_R^\theta(t) = \frac{1}{2\theta}\int_0^t (1-s)^{1-\theta} ds = \mu^\theta(t). \]
\medskip

(b) Denote by $\nu_N^\theta$ the Borel measure on $\cB([0,1])$ with the insider time points included, i.e.
\[ d\nu_N^\theta(s) := N(s-l_N(s))\,ds. \]
Because  
\[ 0\le \mu_R^\theta([0,1]) - \frac{R}{R+M} \nu_{R+M}^\theta([0,1])
           \le R M \frac{|\tau_R^\theta|^2}{2} \to 0,
      \]
we get for $t\in [0,1]$ that
\[\lim_{N\to \infty} \nu_N^\theta(t) = \frac{1}{2\theta}\int_0^t (1-s)^{1-\theta} ds = \mu^\theta(t). \]
(c) As $\nu_N^\theta(\{t\}) = \mu^\theta(\{t\})=0$ for all $t\in [0,1]$ by Item
\eqref{item:3:lemma:convergence_to_diagonal} it follows that 
\[ \lim_{N\to \infty} \int_0^1 A(s) \nu_N^\theta(ds) =\int_0^1 A(s)\,\mu^\theta(ds). \]
We observe that we can replace the nets $\tau_R^\theta$ by $\sigma_N^\theta$ in (a):
\medskip

(d) Now the statement follows from
\begin{align*}     
&     \bigg | N \int_0^1\int_{l_N(s)}^s A^N(t,s) dtds - \int_0^1 N(s-l_N(s)) A(s) ds \bigg | \\
&\le  \bigg | \int_0^1N \int_{l_N(s)}^s (A^N(t,s) -A^N(s,s) )dtds \bigg | \\
&\quad  + \bigg |  \int_0^1  N (s-l_N(s)) (A^N(s,s) -A(s)) ds \bigg | \\
&\le   \bigg | \int_0^1 N (s-l_N(s)) \sup_{l_N(s) < t<s} |A^N(t,s) -A^N(s,s)| ds \bigg |  \\
&\quad  + \bigg |  \int_0^1  N (s-l_N(s)) (A^N(s,s) -A(s)) ds \bigg | \to 0,
\end{align*}
where the convergence towards zero follows from dominated convergence and
 $0\le N(s-l_N(s))\le N|\sigma_N^\theta|\le c_\theta$.
\end{proof}


\subsection{Projection onto the first running chaos}

In this section, in \cref{sec:2nd_chaos}, and in \cref{sec:proof:thm:error_limit}
we follow the convention that integrals over time are understood as to be taken over the indicated
range, but without grid points from $\sigma_N^\theta$. This does not affect the integrals, but avoids
unnecessary cases regarding the function $l_N(u)$, namely that at a grid point $l_N$ returns
the previous grid point.
\medskip

For the following we let
\begin{align*}
Z^T & := \sum_{m=1}^M \left [ F_m(s_m,X_{s_m},L) -  F_m(s_{m-1},X_{s_{m-1}},L)\right ],\\
Z_{\tau}^T & := \sum_{t_n \le T}  \vph(t_{n-1},X_{t_n-1},L)
               (X_{t_n}-X_{t_{n-1}})
\end{align*}
with $T\in \tau = \{0 = t_0 < t_1 < \dots < t_N = 1\}$.
\bigskip

\begin{proposition} \label{first-chaos} Let $\sigma_N^\theta$, $N>M$, be the nets from \cref{def:time_nets}.
For any admissible representation $Z$ it holds that
\begin{multline*}
\lim_{N\to \infty} N \| H_{\sigma_N^{\theta}}(Z^T - Z^T_{\sigma_N^\theta} )\|_{L_2}^2 \\
=  \frac{1}{2\theta} \int_0^T (1-t)^{1-\theta}
      \int_t^T \| \E_s [ \sigma^2(X_t)\nabla_y \vph_x(t,X_t,L) \cdot D_s L]\|_{L_2}^2
      dsdt.
\end{multline*}
\end{proposition}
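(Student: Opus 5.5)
Our plan is to use the decomposition \eqref{Z-Z-N} from the proof of \cref{lemma:Z-Z_N_variance_new}, restricted to $[0,T]$:
\[ Z^T-Z^T_{\sigma_N^\theta} = \int_0^T (\delta_N\vph)(t)\sigma(X_t)\,\de W_t + \int_0^T \sigma(X_t)(\delta_N\nabla_y\vph)(t)\cdot D_tL\,dt , \]
and to compute the Clark--Ocone type integrand $k$ of this random variable. The formula \eqref{eq:proj_1st_chaos} then gives $H_{\sigma_N^\theta}$. Since the variable is a Borel function of $(X_{t_n})$ (with the $s_m$ in the net), \cref{statement:integrand_piecewise_martinagele} applies. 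Concretely, $k_s = \E_s D_s(Z^T-Z^T_{\sigma_N^\theta})$. We would compute $D_s$ of the Skorohod integral via the commutation relation $D_s\delta(u)=u_s+\delta(D_su)$. The projection needs only $\E_{t_{n-1}}k_s$ for $s\in(t_{n-1},t_n]$, and then
\[ \|H(\cdot)\|^2=\sum_n\int_{t_{n-1}}^{t_n}\E|\E_{t_{n-1}}k_s|^2ds. \]

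The key steps run in this order.

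\emph{Step 1.} We show that the contribution $u_s=(\delta_N\vph)(s)\sigma(X_s)$ vanishes when conditioned on $\cF_{t_{n-1}}$ at leading order. Expanding $\vph(s)-\vph(l_N(s))$ by Itô/Taylor in $X$ (with $L$ frozen) gives
\[ (\delta_N\vph)(s)\approx \vph_x(l_N(s))(X_s-X_{l_N(s)}) + \text{(terms of order } s-l_N(s)). \]
The $\E_{t_{n-1}}$ of the first term kills the increment only after accounting for the dependence of $L$ on future increments. This produces the term $\nabla_y\vph_x\cdot D L$.

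\emph{Step 2.} For $\E_{t_{n-1}}\delta(D_s u)$ with $\delta(D_su)=\int_s^T D_s[(\delta_N\vph)(t)\sigma(X_t)]\,\de W_t$, we would again apply \cref{prop:skorohod_IBP}-style integration by parts. The leading term should come from $D_s$ hitting $L$ inside $\vph$, which gives $\nabla_y(\delta_N\vph)(t)\cdot D_sL$. Together with the Lebesgue term $\int_0^T\sigma(X_t)(\delta_N\nabla_y\vph)(t)\cdot D_tL\,dt$, the leading part of $\E_{t_{n-1}}k_s$ should become, after replacing $\delta_N\nabla_y\vph(t)$ by $\nabla_y\vph_x(t)\,(X_t-X_{l_N(t)})$ and then $X_t-X_{l_N(t)}$ by $\int_{l_N(t)}^t\sigma\,dW$,
\[ \E_{t_{n-1}}k_s \approx \int_{s}^T \E_{t_{n-1}}\big[\sigma^2(X_t)\nabla_y\vph_x(t)\cdot D_sL\big]\one_{\{l_N(t)<s<t\}}\ldots \]
We expect the correct bookkeeping to give a random variable of the form
\[ \int (\text{local increment of length } t-l_N(t)) \times \E_{t_{n-1}}[\sigma^2(X_t)\nabla_y\vph_x(t)\cdot D_sL]. \]
Squaring, summing, and using the conditional isometry in the local increments then yields a double integral
\[ N\int_0^T\int_{l_N(t)}^t A^N(r,t)\,dr\,dt , \qquad A^N(r,t)\approx\int_t^T\|\E_{r}[\sigma^2(X_t)\nabla_y\vph_x(t)\cdot D_sL]\|^2ds . \]

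\emph{Step 3.} We would apply \cref{lemma:convergence_to_diagonal} with these $A^N$, after truncating to make them bounded; $C_p^\infty$ regularity and dominated convergence handle the truncation. The pieces $(s_{m-1},s_m]$ have finitely many discontinuities, matching item (3), and the lemma gives the constant $\frac1{2\theta}(1-t)^{1-\theta}$. All Taylor remainder terms are $O(|\sigma_N^\theta|^{3/2})$ in $L_2$ and contribute $o(1/N)$.

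The main obstacle is Step 2: the exact identification of the leading term of $\E_{t_{n-1}}k_s$. This requires tracking the Malliavin derivatives of $L$ through the Skorohod integral and showing that all other terms (those with $D_sX_t$, and $\vph_x$ without $\nabla_y$) are either killed by the conditional expectation or are of smaller order. I would first treat the case $X=W$, $K=M=1$ explicitly to fix the structure.
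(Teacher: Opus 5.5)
Your frame (compute $\E_{l_N(s)}D_s(Z^T-Z^T_{\sigma_N^\theta})$, square it, integrate in $s$, finish with \cref{lemma:convergence_to_diagonal}) is the paper's. The gap is in Step~2: you look for the leading term in the wrong place, and as written the computation would not produce the stated constant. You write $\de(D_su)=\int_s^T D_s[(\delta_N\vph)(t)\sigma(X_t)]\,\de W_t$. But $L$ depends on $X_{s_1},\dots,X_{s_M}$, so $D_su_t=\sigma(X_t)(\delta_N\nabla_y\vph)(t)\cdot D_sL\neq 0$ also for $t<s$. Moreover, by \cref{lemma:expectation_of_skorohod}, $\E_{l_N(s)}\int_0^T D_su_t\,\de W_t=\int_0^{l_N(s)}\E_{l_N(s)}D_su_t\,\de W_t$. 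Hence the part over $[s,T]$ that you keep is annihilated by the conditioning, and the part over $[0,l_N(s)]$ that you drop carries the limit. Inserting \eqref{ito} there and letting $D_s$ act on $L$ gives
\[ \int_0^{l_N(s)}\!\!\int_{l_N(u)}^u \eta(u,t;s)\,\de W_t\,\de W_u,\qquad \eta(u,t;s)=\E_{l_N(s)}\bigl[\sigma(X_u)\sigma(X_t)\nabla_y\vph_x(t,X_t,L)\cdot D_sL\bigr]. \]
This is a sum of second-order local increments over \emph{all} intervals preceding $l_N(s)$, and its second moment is of order $\sum_j (t_j-t_{j-1})^2\sim 1/N$ for every $s$. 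The term you display, with $\one_{\{l_N(t)<s<t\}}$, lives only on the interval containing $s$ (it is of the type of the paper's $\nu$- and $\gamma$-terms). It is $O(|\sigma_N^\theta|)$ in $L_2$ and contributes only $O(|\sigma_N^\theta|^2)=o(1/N)$. Your own test case shows this. Take $X=W$, $M=K=1$, $s_1=1$, $L=W_1$ and $\vph(t,x,y)=xy$ (from $\Phi_1(x,y)=\tfrac{x^2}{2}y$). Then for $s\in(t_{n-1},t_n]$ one gets $\E_{l_N(s)}D_s(Z^T-Z^T_{\sigma_N^\theta})=\sum_{j<n}\frac{(W_{t_j}-W_{t_{j-1}})^2-(t_j-t_{j-1})}{2}+(t_n-t_{n-1})$. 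The sum over past intervals produces the whole limit ($\tfrac14$ for $\theta=1$), and the last term is negligible.

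The same misplacement affects your kernel. $H_{\sigma_N^\theta}$ conditions the integrand at the derivative time $s$ on $\cF_{l_N(s)}$. After Fubini the main term is therefore $\int_0^T\int_{l_N(u)}^u\int_{u_N(u)}^T\E\,\eta(u,t;s)^2\,ds\,dt\,du$. So \cref{lemma:convergence_to_diagonal} has to be applied to $A^N(t,u)=\int_{u_N(u)}^T\|\E_{l_N(r)}[\sigma(X_u)\sigma(X_t)\nabla_y\vph_x(t,X_t,L)\cdot D_rL]\|_{L_2}^2\,dr$, with the conditional expectation attached to the outer (derivative) variable. This is what produces $\E_s[\cdots D_sL]$ in the limit. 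Your $A^N(r,t)$ instead conditions at the inner local time $r\in(l_N(t),t)$ and would lead to $\E_t[\cdots D_sL]$, a different and in general smaller constant. For example, with $L=W_1^2$, $\vph(t,x,y)=xy$ and $\theta=1$, a direct computation gives $\tfrac23$, as in the statement, while your kernel gives $\tfrac13$. Finally, $\eta(u,t;s)$ is $\cF_{l_N(s)}$-measurable but not adapted in $(t,u)$, so there is no conditional isometry. You need \cref{prop:skorohod_covariance}, and you must show that its trace terms, as well as the remaining terms of the decomposition, are $o(1/N)$.
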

\bigskip

To prepare the proof we first notice that
$Z^T,Z^T_{\sigma_N^\theta}\in \DD_{1,2}$ and that we have
\[ H_{\sigma_N^\theta}(Z^T - Z^T_{\sigma_N^\theta} ) 
   = \int_0^T \E_{l_N(s)} D_s (Z^T- Z^T_{\sigma_N^\theta} ) dW_s \mbox{ a.s.},
\]
where $l_N(s)$ is defined in \eqref{interval}. In the following Lemma we determine the structure of   
$\E_{l_N(s)} D_s (Z^T- Z^T_{\sigma_N^\theta} )$,
where we use \eqref{Z-Z-N} to write
\begin{align}\label{D-Z-Z-N}
& (Z^T- Z^T_{\sigma_N^\theta} ) \notag  \\
& = \int_0^T  (\delta_N\vph)(t) \sigma(X_t)  \de W_t
     + \int_0^T \sigma(X_t) (\delta_N \nabla_y\vph)(t) \cdot D_t L dt \mbox{ a.s.}
\end{align}
\medskip

\begin{lemma} \label{decomposition}
Let \cref{assumptionA}  hold. For a.a. $s\in (0,T)\setminus \sigma_N^\theta$ we have that
\begin{align}\label{eqn:1:decomposition}
& \E_{l_N(s)} D_s \int_0^T (\delta_N\vph)(u) \sigma(X_u)  \de W_u  \notag \\
&=  \int_0^{l_N(s)}  \int_{l_N(u)}^u
                  \eta(u,t;s) \delta W_t \delta W_u
                  + \int_{l_N(s)}^s \nu(t;s) dt \\
& + \int_0^{l_N(s)} \int_{l_N(u)}^u
                  \lambda(u,t;s) dt \delta W_u \mbox{ a.s.}, \notag
\end{align}
where, for $l_N(u) < t < u < l_N(s)$ with $u\not\in \sigma_N^\theta$ and $l_N(s) < t < s < u_N(s)$,
\begin{align*}
    \eta(u,t;s)
& := \E_{l_N(s)} \Big[ \sigma(X_u)\sigma(X_t) \nabla_y \vph_x(t,X_t,L)
    \cdot D_s L \Big], \\
    \nu(t;s)
& := \E_{l_N(s)} \bigg[ \vph_x(t,X_t,L) \sigma(X_t) D_t\bS(X_s) \\
& \quad + \bS(X_s)\sigma(X_t)\nabla_y\vph_x(t,X_t,L) \cdot D_t L \\
& \quad  - \bS(X_s)\sigma(X_t)\sigma'(X_t) \vph_x(t,X_t,L) \bigg], \\
    \lambda(u,t;s)
& := \E_{l_N(s)} D_s \left[\sigma(X_u) \sigma(X_t)\nabla_y \vph_x(t,X_t,L) \cdot D_t L\right] \\
& \quad - \E_{l_N(s)} D_s \left[\sigma(X_u) \sigma(X_t)\sigma'(X_t) \vph_x(t,X_t,L)\right]\\
& \quad + \E_{l_N(s)} \Big[ (D_t \sigma(X_u)) D_s \vph_x(t,X_t,L) \sigma(X_t)\Big]
\end{align*}
and
\begin{align} \label{eqn:2:decomposition}
& \E_{l_N(s)} D_s \int_0^T  \sigma(X_t) (\delta_N\nabla_y\vph)(t)
  \cdot D_t L dt \notag \\
& =  \E_{l_N(s)}  \int_0^T \int_{l_N(t)}^t
    \delta(t,r;s) dr dt +  \E_{l_N(s)} \int_s^{u_N(s)}  \gamma(t;s) \, dt \\
& \quad +  \int_0^T  \int_{l_N(t)}^{t\wedge l_N(s)} \E_{l_N(s)}  D_s \gamma(t;r) \delta W_r dt,   \notag
\end{align}
where, for $l_N(t)<r<t$, $s<t<u_N(s)$, and $l_N(t)<r<t\wedge l_N(s)$ with $t\not \in \sigma_N^\theta$,
\begin{align*}
    \delta(t,r;s)
&:= D_s \bigg( \nabla_y \vph_x(r,X_r,L) \sigma(X_r)  D_r(D_t L \sigma(X_t)) \\
&   \quad + \sigma(X_t)\sigma(X_r) \nabla_y\nabla_y\vph_x(r,X_r,L)\cdot D_r L \cdot D_t L \\
&   \quad -\sigma(X_t) \sigma(X_r)\sigma'(X_r)\nabla_y  \vph_x(r,X_r,L) \cdot D_t L \bigg) \\
    \gamma(t;r)
&:= \sigma(X_t) \nabla_y \vph_x(r,X_r,L) \sigma(X_r)\cdot D_t L.
\end{align*}
\end{lemma}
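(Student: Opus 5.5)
The plan is to compute $\E_{l_N(s)} D_s$ of each of the two terms in \eqref{D-Z-Z-N} directly. We use the commutation relation between the Malliavin derivative and the Skorohod integral, $D_s\int_0^T v_u\,\delta W_u = v_s + \int_0^T D_s v_u\,\delta W_u$. We also use the fact that, for $\eta$ in the domain of $\delta$ and $r\ge$ the support point, $\E_r\int_0^T v_u\,\delta W_u = \int_0^r \E_u v_u\,\delta W_u$ type identities hold. More precisely, $\E_r \delta(v) = \delta(\one_{[0,r]}\E_r v)$, which follows by duality with $\E_r$-measurable smooth test variables.

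\textbf{First term.} With $v_u = (\delta_N\vph)(u)\sigma(X_u)$, $s\notin\sigma_N^\theta$, the commutation relation gives
\[ D_s\int_0^T v_u\,\delta W_u = v_s + \int_0^T D_s v_u\,\delta W_u. \]
We then apply $\E_{l_N(s)}$.

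For the boundary term $v_s = \sigma(X_s)[\vph(s,X_s,L)-\vph(l_N(s),X_{l_N(s)},L)]$ we write the increment in $t\in(l_N(s),s)$ using an Itô/Skorohod expansion of $t\mapsto \vph(t,X_t,L)$ with $L$ frozen. This is possible since, for fixed $y$, $\vph(\cdot,X_\cdot,y)=\partial_xF_m$ is a martingale-like object: $\partial_t\partial_xF_m+\tfrac{\sigma^2}{2}\partial_x^3F_m + \sigma\sigma'\partial_x^2F_m=0$, obtained by differentiating \eqref{diffeq-F}. The increment is then $\int_{l_N(s)}^s \vph_x\sigma(X_t)\,dW_t - \int\sigma\sigma'\vph_x\,dt$, evaluated at $y=L$. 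Substituting $y=L$ converts $dW_t$ into a Skorohod integral plus the correction $\nabla_y\vph_x\cdot D_tL\,\sigma(X_t)\,dt$, by \cref{prop:skorohod_IBP}. Taking $\E_{l_N(s)}$ and using duality once more against $\sigma(X_s)$ produces the $D_t\sigma(X_s)$ term. Together these give exactly $\int_{l_N(s)}^s\nu(t;s)\,dt$.

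For the inner Skorohod integral, $\E_{l_N(s)}$ restricts $u$ to $u<l_N(s)$. For such $u$ we again expand $(\delta_N\vph)(u)$ on $(l_N(u),u)$ as above, and then apply $D_s$ to it. Since $s>u$, the derivative $D_s$ acts only through $L$ (and through $\sigma(X_u)$ trivially zero), so the $\delta W_t$ part gives $\eta(u,t;s)$ and the $dt$ parts give $\lambda(u,t;s)$. This yields \eqref{eqn:1:decomposition}.

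\textbf{Second term.} We handle the Lebesgue integral similarly. Write $(\delta_N\nabla_y\vph)(t)$ via the same expansion on $(l_N(t),t)$, namely
\[ \int_{l_N(t)}^t\nabla_y\vph_x\sigma\,\delta W_r + \int(\ldots)\,dr. \]
Multiplying by $\sigma(X_t)D_tL$ and using the product rule $F\,\delta(u)=\delta(Fu)+\langle DF,u\rangle$ then gives the $dr$-integrand whose $D_s$ is $\delta(t,r;s)$. Next we apply $D_s$ to the Skorohod part. The commutation relation yields the boundary contribution $\gamma(t;s)$, present only when $l_N(t)<s<t$, i.e. $t\in(s,u_N(s))$, together with $\int D_s\gamma(t;r)\,\delta W_r$. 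Conditioning cuts the range to $r<l_N(s)$. This gives \eqref{eqn:2:decomposition}.

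\textbf{Main obstacle.} The main difficulty is justifying these manipulations: membership in $\dom\,\delta$ and $\DD_{1,2}$, and the interchange of $D_s$, $\E_{l_N(s)}$ and the integrals. For this we rely on $\Phi_m,v_k\in C^\infty_p$ on $[0,T]$, which gives $L^p$-bounds on all derivatives, and we restrict to a.e. $s$ off the grid. The sign and placement of the $\sigma\sigma'$ drift corrections must also be tracked carefully.
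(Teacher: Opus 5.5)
Your proposal is correct and follows essentially the same route as the paper. The steps are: commute $D_s$ with the Skorohod integral (\cref{prop:derivative_of_skorohod_integral}); cut the range with $\E_{l_N(s)}\delta(v)=\delta(\one_{[0,l_N(s)]}\E_{l_N(s)}v)$ (\cref{lemma:expectation_of_skorohod}, for which your duality argument is a valid and shorter proof than the paper's chaos computation); expand $\delta_N\vph$ and $\delta_N\nabla_y\vph$ by the anticipating It\^o formula \cref{lemma:ito}; and use the product rule to move $\sigma(X_s)$, $\sigma(X_u)$ and $\sigma(X_t)D_tL$ inside the Skorohod integrals. Note that the substitution $y=L$ in \cref{lemma:ito} rests on the Nualart--Pardoux substitution formula, not only on the product rule \cref{prop:skorohod_IBP}. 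One bookkeeping point: the third term of $\lambda(u,t;s)$, namely $\E_{l_N(s)}[(D_t\sigma(X_u))D_s\vph_x(t,X_t,L)\sigma(X_t)]$, does not come from the $dt$-parts of the expansion of $(\delta_N\vph)(u)$. It comes from moving $\sigma(X_u)$ inside the $\delta W_t$-integral to produce $\eta(u,t;s)$, exactly as you did with $\sigma(X_s)$ to obtain $\nu(t;s)$.
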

\bigskip

\begin{proof}
\eqref{eqn:1:decomposition}
By \cref{prop:derivative_of_skorohod_integral} and \cref{lemma:expectation_of_skorohod} we have for a.a. 
$s\in (0,T) \setminus \sigma_N^\theta$ that
\begin{align}\label{first-decomposition}
&   \E_{l_N(s)} D_s \bigg( \int_0^T (\delta_N\vph)(u) \sigma(X_u)  \de W_u  \bigg)  \notag \\
& = \E_{l_N(s)} (\delta_N\vph)(s) \bS(X_s) + \int_0^{l_N(s)} \E_{l_N(s)} D_s\left[(\delta_N\vph)(u)
    \bS(X_u)\right] \delta W_u \mbox{ a.s.}
\end{align}
By It\^o's formula (\cref{lemma:ito}) we have for $u\not \in \sigma_N^\theta$ that, a.s.,
\begin{align} \label{ito}
     \delta_N \vph(u) 
& =  \vph(u,X_u,L)-\vph(l_N(u),X_{l_N(u)},L) \notag \\
& =  \int_{l_N(u)}^u \vph_t(t,X_t,L) dt + \int_{l_N(u)}^u \vph_x(t,X_t,L) \sigma(X_t) \delta W_t \notag \\
& \quad +  \int_{l_N(u)}^u  \sigma(X_t)\nabla_y \vph_{x}(t,X_t,L)\cdot D_tL dt \notag \\
& \quad + \frac{1}{2} \int_{l_N(u)}^u \vph_{xx}(t,X_t,L) \sigma^2(X_t) dt \notag \\
& =  \int_{l_N(u)}^u \vph_x(t,X_t,L) \sigma(X_t) \delta W_t \\
& \quad +  \int_{l_N(u)}^u  \sigma(X_t)\nabla_y \vph_{x}(t,X_t,L)\cdot D_tL dt \notag \\
& \quad - \int_{l_N(u)}^{u}\sigma(X_t)\sigma'(X_t) \vph_x(t,X_t,L) dt. \notag
\end{align}

\medskip\noindent
\textbf{First term of \eqref{first-decomposition}:}
Applying \eqref{ito} and \cref{prop:skorohod_IBP} yields
\begin{align*}
    \E_{l_N(s)} \bS(X_s)(\delta_N\vph)(s)
& = \E_{l_N(s)} \int_{l_N(s)}^s \bS(X_s) \vph_x(t,X_t,L) \sigma(X_t) \delta W_t\\
&\quad  + \int_{l_N(s)}^s \E_{l_N(s)} \bigg[
      \vph_x(t,X_t,L)\sigma(X_t) D_t\bS(X_s)\\
&\quad+ \bS(X_s)\sigma(X_t) \nabla_y\vph_x(t,X_t,L) \cdot D_t L \\
&\quad- \bS(X_s)\sigma(X_t)\sigma'(X_t) \vph_x(t,X_t,L)\bigg] dt,
\end{align*}
and using  \cref{lemma:expectation_of_skorohod}
to notice that the
conditional expectation of the
Skorohod integral vanishes gives  $\nu(t;s)$.
\medskip

\textbf{Second term of \eqref{first-decomposition}:}
Substituting \eqref{ito} into the second term of
\eqref{first-decomposition} yields 
\begin{align*}
&   \int_0^{l_N(s)} \E_{l_N(s)} D_s\big[(\delta_N\vph)(u) \bS(X_u)\big] \delta W_u \\
& = \int_0^{l_N(s)} \E_{l_N(s)} D_s \bigg[
      \int_{l_N(u)}^u \vph_x(t,X_t,L)
      \sigma(X_t) \delta W_t \,  \sigma(X_u)
      \bigg] \delta W_u \\
& \quad + \int_0^{l_N(s)} \int_{l_N(u)}^u \E_{l_N(s)} D_s \bigg( \Big[
      \sigma(X_t)\nabla_y \vph_x(t,X_t,L)
      \cdot D_t L \\
& \quad \hspace*{11em}
      -\sigma(X_t)\sigma'(X_t)\vph_x(t,X_t,L)\Big] \sigma(X_u) \bigg) dt \delta W_u.
\end{align*}
The integrand of the second term are the first two terms of  $\lambda(u,t;s)$.
Using \cref{prop:skorohod_IBP} and noting $t < u \le l_N(s) < s$ we get
\begin{align*}
  & \E_{l_N(s)} D_s \bigg[
      \int_{l_N(u)}^u \vph_x(t,X_t,L)
      \sigma(X_t) \delta W_t
      \sigma(X_u)\bigg] \\
& = \int_{l_N(u)}^u \E_{l_N(s)} \Big[
      \sigma(X_u)\sigma(X_t)  D_s \vph_x(t,X_t,L)
      \Big] \delta W_t \\
& \quad  + \int_{l_N(u)}^u \E_{l_N(s)} \Big[
      (D_t \sigma(X_u)) D_s \vph_x(t,X_t,L) \sigma(X_t)
      \Big] dt.
\end{align*}
This yields $\eta(u,t;s)$ and the third term of $\lambda(u,t;s)$.
\medskip

\eqref{eqn:1:decomposition} For $t\not \in \sigma_N^\theta$ we get
by It\^o's formula (\cref{lemma:ito}),
\begin{align*}
     (\delta_N\nabla_y\vph)(t)
& =  \int_{l_N(t)}^t \nabla_y \vph_x(r,X_r,L) \sigma(X_r) \delta W_r \\
&   \quad + \int_{l_N(t)}^t  \sigma(X_r)\nabla_y\nabla_y\vph_x(r,X_r,L) \cdot D_r L  dr\\
&   \quad - \int_{l_N(t)}^t\sigma(X_r)\sigma'(X_r)\nabla_y  \vph_x(r,X_r,L) dr \mbox{ a.s.},
\end{align*}
so that by  \cref{prop:skorohod_IBP},
\begin{align*}
&   \hspace{-3em} \sigma(X_t)(\delta_N\nabla_y\vph)(t) \cdot D_t L\\
& = \int_{l_N(t)}^t \sigma(X_t) \nabla_y \vph_x(r,X_r,L) \sigma(X_r)\cdot D_t L \, \delta W_r \\
& \quad + \int_{l_N(t)}^t\nabla_y \vph_x(r,X_r,L) \sigma(X_r)\cdot     D_r ( \sigma(X_t) \, D_t L) dr \\
& \quad + \int_{l_N(t)}^t\sigma(X_t) \sigma(X_r)\nabla_y^2\vph_x(r,X_r,L) \cdot (D_r L\otimes D_t L) dr \\
& \quad - \int_{l_N(t)}^t  \sigma(X_t)\sigma(X_r)\sigma'(X_r)\nabla_y\vph_x(r,X_r,L) \cdot D_t L dr \mbox{ a.s.}
\end{align*}
Using that $\one_{(l_N(t), t]}(s) = \one_{[s, u_N(s))}(t)$ we get
\begin{align*}
&   \E_{l_N(s)} D_s \int_0^T \sigma(X_t) (\delta_N\nabla_y\vph)(t) \cdot D_t L\, dt\\ 
& = \mathbb{E}_{l_N(s)}  \int_0^T [\sigma(X_t) \nabla_y \vph_x(s,X_s,L) \sigma(X_s)\cdot D_t L  ]\one_{(l_N(t), t]}(s) \, dt  \\
&   \quad +  \mathbb{E}_{l_N(s)} \int_0^T  \int_{l_N(t)}^t D_s (\sigma(X_t) \nabla_y \vph_x(r,X_r,L) \sigma(X_r)\cdot
     D_t L )\delta W_r dt \\
& \quad +     \mathbb{E}_{l_N(s)}  \int_0^T \int_{l_N(t)}^t \delta(t,r;s) dr dt\\
& = \E_{l_N(s)} \int_s^{u_N(s)} \sigma(X_t) \nabla_y \vph_x(s,X_s,L) \sigma(X_s)\cdot D_t L   \, dt  \\
& \quad + \mathbb{E}_{l_N(s)} \int_0^T  \int_{l_N(t)}^t D_s (\sigma(X_t) \nabla_y \vph_x(r,X_r,L) \sigma(X_r)\cdot
          D_t L )\delta W_r dt \\
& \quad +     \mathbb{E}_{l_N(s)}  \int_0^T \int_{l_N(t)}^t \delta(t,r;s) dr dt. \qedhere
\end{align*}
\end{proof}

\begin{proof}[Proof of \cref{first-chaos}]
Since
\[ H_{\sigma_N^\theta}(Z^T - Z^T_{\sigma_N^\theta} ) = \int_0^T \E_{l_N(s)} D_s (Z^T- Z^T_{\sigma_N^\theta} ) dW_s,
\]
we have 
\[ \| H_{\sigma_N^{\theta}}(Z^T - Z^T_{\sigma_N^\theta} )\|_{L_2}^2
   =  \int_0^T \E (\E_{l_N(s)} D_s (Z- Z_{\sigma_N^\theta} ) )^2ds.  \]
Only the term
      $\int_0^{l_N(s)} \int_{l_N(u)}^u\eta(u,t;s) \delta W_t \delta W_u$
      from  \cref{decomposition} contributes to the limit expression   of 
      $$ \lim_{N\to \infty } N  \int_0^T \E (\E_{l_N(s)} D_s (Z- Z_{\tau_N^\theta} ) )^2ds.$$
It holds
\begin{align}\label{for-the-limitnew}
& \int_0^T \E \bigg | \int_0^{l_N(s)} \int_{l_N(u)}^u\eta(u,t;s) \delta W_t \delta W_u \bigg |^2ds \notag \\
&= \int_0^T \int_0^{l_N(s)} \int_{l_N(u)}^u\E (\eta(u,t;s))^2 dtduds \\
& \quad +  \E \int_0^T\int_0^{l_N(s)}\int_0^{l_N(s)} D_r  \bigg( \int_{l_N(u)}^u\eta(u,t;s) \delta W_t \bigg) \notag \\
& \hspace{7em} \times      D_u \bigg ( \int_{l_N(r)}^r\eta(r,t;s) \delta W_t \bigg)  du dr ds \notag \\
& \quad + \E \int_0^T \int_0^{l_N(s)} \int_{l_N(u)}^u\int_{l_N(u)}^u  D_r\eta (u,t;s)
    D_t\eta(u,r;s) dtdrduds, \notag
\end{align}
where we use the covariance formula for Skorohod integrals, \cref{prop:skorohod_covariance}.
We proceed with the first term of  \eqref{for-the-limitnew} and get
\begin{align*}
&  \int_0^T \int_0^{l_N(s)} \int_{l_N(u)}^u\E (\eta(u,t;s))^2 dtduds \\
&= \int_0^T \int_{l_N(u)}^u \int_{u_N(u)}^T  \E(\eta(u,t;s))^2  dsdtdu \\
&= \int_0^T \int_{l_N(u)}^u \int_{u_N(u)}^T   \E (
   \E_{l_N(s)} \left [ \sigma(X_u)\sigma(X_t)
   \nabla_y \vph_x(t,X_t,L)\cdot D_sL ) \right ])^2 dsdtdu,
\end{align*}
where
\[u_N(t) :=  \sum_{n=1}^N \one_{(\tnm,\tn]}(t) \tn. \]
Choose now
\[      A^N(t,s):=\begin{cases} \int_{u_N(s)}^T   \E (
      \E_{l_N(r)} \left [ \sigma(X_s)\sigma(X_t)
            \nabla_y \vph_x(t,X_t,L)\cdot D_rL ) \right ])^2 dr & \\
        & \hspace{-6em} t,s \in (0, T) \\
      0 & \hspace{-6em} \text{otherwise}
   \end{cases}. \]
and
\[ A(s):=\begin{cases} \int_s^T \E (\E_r [ \sigma^2(X_s)\nabla_y \vph_x(s,X_s,L) \cdot D_r L])^2 dr,
    \quad  s \in (0, T),\\
     0, \quad \text{otherwise}.   \end{cases}. \]
Then the conditions of \cref{lemma:convergence_to_diagonal} are satisfied and we have
\begin{multline*}
\hspace*{-1em} \lim_{N \to \infty} N \! \int_0^T \int_{l_N(s)}^s \! \int_{u_N(s)}^T   \!\!\!\E (
      \E_{l_N(r)} \left [ \sigma(X_s)\sigma(X_t)
            \nabla_y \vph_x(t,X_t,L)\cdot D_rL ) \right ])^2 drdtds \\
=\frac{1}{2\theta} \int_0^T (1-s)^{1-\theta}
      \int_s^T \E \bet \E_r [ \sigma(X_s)^2\nabla_y \vph_x(s,X_s,L) \cdot D_r L]\rag^2
      drds.
\end{multline*}
In the same way one can show that the second moments of the other terms
are of the order $o(N^{-1})$. This proves the assertion.
\end{proof}


\subsection{Strategy for the proof of the main theorems} \ \medskip
\label{sec:proof_strategy}

{\bf (a)} By \cref{statement:FRC_KW} and \cref{lemma:Z-Z_N_variance_new} the statements, \cref{statement:R-approximation_with_KW-projection} and \cref{thm:2nd_chaos}, can be directly derived from each other. Therefore it is
sufficient to verify \cref{thm:2nd_chaos} only.
\smallskip

{\bf (b)} The case $T<1$ under the assumption
\begin{equation}\label{eqn:H-theta-condtion}
\int_0^1 (1-t)^{1-\theta} \E \left |\sigma^2 (X_t) \frac{\partial^2 F}{\partial x^2}(t,X_t)\right |^2 dt < \infty:
\end{equation}
As $L$ is $\cF_T$-measurable, by orthogonality we have  
\begin{align*}
&    \var\left ([Z - Z_{\sigma_N^\theta}] - P_{\sigma_N^\theta}\left[Z - Z_{\sigma_N^\theta}\right] \right ) \\
& =  \var\left ([Z^T - Z^T_{\sigma_N^\theta}] - P^X_{\sigma_N^\theta}[Z^T - Z^T_{\sigma_N^\theta}] \right )  \\
&\quad  + \var\left (\left [ (Z-Z^T) - (Z_{\sigma_N^\theta}-Z^T_{\sigma_N^\theta})\right ]
    - P^X_{\sigma_N^\theta} \left [ (Z-Z^T) - (Z_{\sigma_N^\theta}-Z_{\sigma_N^\theta}^T)\right ] \right ).
\end{align*}
Then one has
\begin{align*}
& \hspace*{-2em}  \lim_{N\to \infty} N \var\Big (\left [ (Z-Z^T) - (Z_{\sigma_N^\theta}-Z^T_{\sigma_N^\theta})\right ] \\
& \hspace*{4em} - P^X_{\sigma_N^\theta} \left [ (Z-Z^T) - (Z_{\sigma_N^\theta}-Z_{\sigma_N^\theta}^T)\right ] \Big ) \\
&  = \lim_{N\to \infty} N \var([Z-Z^T]- [Z_{\sigma_N^\theta}-Z^T_{\sigma_N^\theta}]) \\
&  = \frac{1}{2\theta} \int_T^1 (1-t)^{1-\theta} H^2(t)dt
\end{align*}
with
\[ H(t) := \left \| \sigma^2(X_t)\vph_x(t,X_t)  \right \|_{L_2}.\]

The first equality follows, with the obvious modification that we are on the time interval $[T,1]$,
from \cite[Theorem 4.4]{Geiss:2002} as the first limit refers to the {\em optimal approximation} and the second one to the
{\em simple approximation} (both are treated in \cite{Geiss:2002}). To see the second equality we proceed as follows: Again using 
\cite[Theorem 4.4]{Geiss:2002} we consider
\[ \int_T^{t_{n(T),N}^\theta} (t_{n(T),N}^\theta-u) H^2(u) du + \sum_{n=n(T)+1}^N \int_{t_{n-1,N}^\theta}^{t_{n,N}^\theta} (t_{n,N}^\theta-u) H^2(u) du, \]
where $t_{n(T),N}^\theta$ is the smallest grid point larger than $T$.
Similar to \cref{lemma:convergence_to_diagonal} one gets, under the condition \eqref{eqn:H-theta-condtion}, 
\begin{multline*}
   \int_T^{t_{n(T),N}^\theta} (t_{n(T),N}^\theta-u) H^2(u) du 
   + \sum_{n=n(T)+1}^N \int_{t_{n-1,N}^\theta}^{t_{n,N}^\theta} (t_{n,N}^\theta-u) H^2(u) du \\
   \stackrel{N\to\infty}{\to} \frac{1}{2\theta} \int_T^1 (1-t)^{1-\theta} H^2(t)dt.
\end{multline*}


\subsection{Projection onto the remainder}\label{sec:2nd_chaos}
\medskip

\begin{proof}[Proof of \cref{thm:2nd_chaos}]
Consider
\begin{align*}
    \E | R_{\tau_N^\theta}(Z^T-Z^T_{\tau_N^\theta} )|^2
& = \E \left | \int_0^T\int_{l_N(s)}^s \E_t D_t D_s (Z-Z_{\tau_N^\theta} ) dW_t dW_s \right |^2 \\
& = \int_0^T\int_{l_N(s)}^s \E| \E_t D_t D_s (Z-Z_{\tau_N^\theta} )|^2  dt ds.
\end{align*}

For a.a. $s$ and $t$ with $l_N(s) < t < s < u_N(s)\le T$ it holds that
\begin{align*}
&   D_t D_s (Z^T-Z^T_{\tau_N^\theta} ) \\
& = D_t \brac \sigma(X_s)(\delta_N\vph)(s) \kets \\
& + D_s \brac \sigma(X_t) (\delta_N\vph)(t)\kets\\
& \quad + \int_0^T D_t D_s \brac \sigma(X_u) (\delta_N\vph)(u)\kets  \de W_u \\
& \quad  +  \int_0^T D_t D_s \kla  \sigma(X_u) ( \delta_N\nabla_y\vph)(u) \cdot  D_u L \mer du \\
& =  \sigma(X_s)^2\vph_x(s) + \sigma'(X_s) \sigma(X_s) (\delta_N\vph)(s) \\
& \quad  + \sigma(X_s)
    (\delta_N\nabla_y\vph)(s) \cdot  D_t L \\
& \quad  + \sigma(X_t) (\delta_N\nabla_y\vph)(t) \cdot  D_s L \\
& \quad  +  \int_0^T D_t D_s \brac \sigma(X_u)
    (\delta_N\vph)(u) \kets  \de W_u \\
& \quad  + \int_0^T D_t D_s \kla  \sigma(X_u) (\delta_N \nabla_y \vph)(u) \cdot D_u L \mer du \\
& =: S_N^1(s) + S_N^2(s) + S_N^3(t,s) + S_N^3(s,t)
     + S_N^4(t,s) + S_N^5(t,s).
\end{align*}
\bigskip
{\bf Term $S_N^1(s)$:} Let
      \[ A(t,s) := \E \bet \E_t \sigma^2(X_s) \vph_x(s,X_s,L) \rag^2
            \sptext{1}{for}{1}
            0\le t \le s \le T. \]
      Then
      \[ \lim_{t\uparrow s}  A(t,s) = A(s,s), \]
      where the convergence is from below. Moreover, $A(s,s)$ is left-continuous.
      We have to show that
      \[   \lim_{N \to \infty}N
            \int_0^T \int_{l_N(s)}^s A(t,s) dt ds
            = \frac{1}{2\theta} \int_0^T (1-s)^{1-\theta} A(s,s) ds, \]
      which follows from \cref{lemma:convergence_to_diagonal}.
      \bigskip

{\bf Term $S_N^2(s)$}: Here we get that
\[ \lim_{N\to\infty} \sup_{s\in [0,T]} \| S_N^2(s)\|_{L_2} = 0. \]

{\bf Terms $S_N^3(s,t)$ and $S_N^3(t,s)$}:
      We only consider the first term. Let $2<p,q<\infty$ such that
      $\frac{1}{2} = \frac{1}{p} + \frac{1}{q}$. Then
\begin{align*}
     \E [S_N^3(s,t)]^2
& =  \E [ \sigma(X_t) (\delta_N\nabla_y\vph)(t) \cdot  D_s L]^2\\
&\le \E \left[ \| \sigma(X_t)  (\delta_N\nabla_y\vph)(t) \|^2_{l_q^K} \pl
            \| D_s L\|^2_{l_{q'}^K}\right] \\
&\le \kla \E\left[ \big | \sigma(X_t)\big |^q
            |\delta_N\nabla_y \vph(t)|^q\right]
      \mer^\frac{2}{q}
      \kla \E  \kla \sum_{m=1}^K |D_s L_m|^{q'}\mer^\frac{p}{q'}
      \mer^\frac{2}{p} \\
& =: c_{p,q,L}  \kla \E \left[\big | \sigma(X_t)\big | ^q
            |\delta_N\nabla_y \vph(t)|^q\right]
      \mer^\frac{2}{q}
\end{align*}
and
\[ \lim_{N\to\infty} \sup_{t\in [0,T]}  \E \left[\big | \sigma(X_t)\big | ^q
            |\delta_N\nabla_y \vph(t)|^q\right] = 0.\]

{\bf Term $S_N^4(t,s)$:} 
We use the covariance formula for the Skorohod integral  \cref{prop:skorohod_covariance} and get
\begin{align*}
     \E \klas \E_t  S_N^{4}(t,s) \mers^2
&\le \E |S_N^{4}(t,s)|^2 \\
&\le \E \int_0^T | D_t D_s \brac \sigma (X_u) (\delta_N \vph)(u) \kets |^2  du \\
&\quad  + \E \int_0^T \int_0^T  | D_r  D_t D_s
      \brac \sigma(X_u) (\delta_N \vph)(u) \kets |^2 dudr.
\end{align*}
These terms are handled by \cref{lemma:convergence2} below.
\medskip

{\bf Term $S_N^5(t,s)$:} Here we have that
\begin{align*}
     \E \klas \E_t S_N^5(t,s) \mers^2
&\le \E \klas      S_N^5(t,s) \mers^2 \\
&\le \int_0^T \E \klas D_t D_s \kla  \sigma(X_u)(\delta_N \nabla_y \vph)(u) \cdot
      D_u L \mer \mers^2 du
\end{align*}
and proceed similarly as before because the $\delta_N$-functional is present which
ensures the convergence.
\end{proof}

\begin{lemma}\label{lemma:convergence2}
Let Assumption (A) hold and $\sigma_N^\theta$, $N>M$, be the nets from \cref{def:time_nets}. Then it holds that
\begin{align*}
      \lim_{N\to \infty} N \int_0^T\int_{l_N(s)}^s \int_0^T \E \bet D_t D_s
      \brac (\sigma (X_u)\delta_N \vph)(u) \kets \rag^2  du dt ds
& = 0, \\
       \lim_{N\to \infty} N \int_0^T\int_{l_N(s)}^s \E \int_0^T \int_0^T  \bet D_r  D_t D_s
       \brac (\sigma (X_u)\delta_N \vph) (u) \kets \rag^2 dr du dt ds
& = 0.
\end{align*}
\end{lemma}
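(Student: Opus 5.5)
The plan is to show that the triple Malliavin derivatives of $\sigma(X_u)(\delta_N\vph)(u)$ are small uniformly in $N$, in the sense that
\[ \sup_{t,s}\int_0^T \E\big|D_tD_s(\sigma(X_u)(\delta_N\vph)(u))\big|^2du \to 0 \]
(and similarly with the extra $D_r$, integrated in $r$). Once this is known, both claims follow. The outer integration $N\int_0^T\int_{l_N(s)}^s \cdots\,dt\,ds$ has total mass $\int_0^T N(s-l_N(s))\,ds \le c_\theta$, because $N|\sigma_N^\theta|\le c_\theta$. Hence it suffices to show that the inner expressions tend to $0$ in a way that is dominated, uniformly in $t,s$ (up to a null set, and away from the grid points), and then to apply dominated convergence, exactly as in step (d) of the proof of \cref{lemma:convergence_to_diagonal}.

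\textbf{Step 1: compute the derivatives.} By the chain rule, $D_tX_u=\sigma(X_u)\one_{[0,u]}(t)$ (this gives $\sigma(X_u)$ for $X=S$, and $1$ for $X=W$). Also $D_tL=\sum_{m}\partial_m v(X_{s_1},\dots,X_{s_M})\,\sigma(X_{s_m})\one_{t\le s_m}$, which lies in every $L_p$ together with all its higher derivatives, since $v_k\in C^\infty_p$. For $u\in(s_{m-1},s_m)$, $u$ not a grid point, the random variable $\sigma(X_u)\vph(u,X_u,L)-\sigma(X_u)\vph(l_N(u),X_{l_N(u)},L)$ is a smooth function $G_N(X_u,X_{l_N(u)},L)$. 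Its first three derivatives are finite sums of products of the following factors: partial derivatives of $\vph$ in $(x,y)$ up to order three, evaluated at $(u,X_u,L)$ or at $(l_N(u),X_{l_N(u)},L)$; powers of $\sigma(X_\cdot)$; and derivatives of $L$. The key structural point is that every such term occurs as a \emph{difference}
\[ \partial^\alpha\vph(u,X_u,L)\,\Gamma_u-\partial^\alpha\vph(l_N(u),X_{l_N(u)},L)\,\Gamma_{l_N(u)}, \]
where $\Gamma$ collects the $D$-factors of $X$. The indicator mismatch $\one_{t\le u}$ versus $\one_{t\le l_N(u)}$ produces terms in which only one of the two sides appears; these are supported on the set $\{l_N(u)<t\le u\}$.

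\textbf{Step 2: estimate the difference terms.} By \cref{assumptionA}(2), $\Phi_m\in C_p^\infty$, so on each $[s_{m-1},s_m]$ the function $F_m$, and hence $\vph$ and all its derivatives, are continuous in $(t,x,y)$ with polynomial growth, uniformly in $t$. The intervals $(s_{m-1},s_m]$ are unions of grid intervals because $s_m\in\sigma_N^\theta$. Continuity of paths and uniform integrability via the moment bounds of $X$ and $L$ therefore give
\[ \sup_{u}\E\big|\partial^\alpha\vph(u,X_u,L)\Gamma_u-\partial^\alpha\vph(l_N(u),X_{l_N(u)},L)\Gamma_{l_N(u)}\big|^2\to0. \]
H\"older's inequality handles the products with the derivatives of $L$, which lie in $L_p$ for every $p$, as was done for $S_N^3$.

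\textbf{Step 3: handle the indicator-mismatch terms.} These are bounded in $L_2$ uniformly, but their $u$-support for fixed $t$ has length at most $|\sigma_N^\theta|\to0$, so their contribution to $\int_0^T\cdots du$ (and to the $dr\,du$ integral) is $O(|\sigma_N^\theta|)\to0$. A similar remark applies to the $r$-integral in the second claim.

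The main obstacle is Step 2 near the points $s_m$. There the one-sided continuity has to be checked, since $\vph$ may jump at $s_m$ and $L$ enters through $F_m(\cdot,\cdot,y)$. One must confirm that $l_N(u)$ never crosses an $s_m$; this holds because $s_m$ belongs to every net. One must also confirm that the polynomial-growth constants of the derivatives of $F_m$ are uniform up to the endpoints of $[s_{m-1},s_m]$; this follows from differentiating under the Gaussian kernel, using $\Phi_m\in C^\infty_p$. Since $T$ is the last insider time, everything in the lemma stays on $[0,T]$, where this regularity is available, and Assumption (1) plays no role.
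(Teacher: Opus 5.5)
Your proof is correct, but it takes a different route from the paper's. The paper does not differentiate $\sigma(X_u)(\delta_N\vph)(u)$ directly. It first expands it with the anticipating It\^o formula \eqref{ito}, which uses $\vph_t$ and the backward equation for $\vph$, together with Skorohod integration by parts. This gives a Skorohod integral plus Lebesgue integrals over $(l_N(u),u]$. Differentiating that representation yields $A_1,\dots,A_4$:
\begin{itemize}
\item $A_1$ and $A_2$ carry the factors $\one_{(l_N(u),u]}(s)$ and $\one_{(l_N(u),u]}(t)$. They are exactly your indicator-mismatch terms and are treated the same way.
\item For $A_3$, a Skorohod integral over $(l_N(u),u]$, smallness comes from the length of the integration range, via the covariance formula \cref{prop:skorohod_covariance}.
\item For $A_4$, a $dr$-integral over $(l_N(u),u]$, smallness again comes from the length of the range, via H\"older.
\end{itemize}
You apply the Malliavin chain rule to $G_N(X_u,X_{l_N(u)},L)$, so only $(x,y)$-derivatives of $\vph$ appear. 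You then get smallness of the remaining terms from uniform continuity in time of these derivatives on each closed $[s_{m-1},s_m]$, plus dominated convergence.

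What each route buys:
\begin{itemize}
\item Your argument is more elementary: it needs no $\vph_t$, no PDE, no anticipating It\^o formula and no moment bounds for Skorohod integrals. It gives only $o(1)$ for the inner integrals. That suffices, because $\int_0^T N(s-l_N(s))\,ds\le c_\theta$, as you note.
\item The paper's route is heavier but quantitative, giving $O(|\sigma_N^\theta|)=O(1/N)$. It also reuses the expansion \eqref{ito} that drives \cref{decomposition} and \cref{first-chaos}.
\end{itemize}

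One small correction to your Step 2 and your closing remark. Taken literally, \cref{assumptionA}(3) sets $\vph(0,\cdot)\equiv 0$ and $\vph(s_{m-1},\cdot)=\partial_x F_{m-1}(s_{m-1},\cdot)$, the value from the previous piece. So for $u$ in the first grid interval after $s_{m-1}$, where $l_N(u)=s_{m-1}$, the difference $(\delta_N\vph)(u)$ picks up the jump of $\vph$ at $s_{m-1}$. Your uniform-in-$u$ convergence fails there; $s_{m-1}$ being a grid point does not prevent this. It is harmless, though: these $u$ form a set of measure at most $M|\sigma_N^\theta|$ on which all integrands are uniformly bounded in $L_2$, so they are absorbed exactly as in your Step 3. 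The paper's expansion \eqref{ito} faces the same point and tacitly uses the right-continuous value $\partial_xF_m(s_{m-1},\cdot)$ there. Your approach handles it more robustly.
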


\begin{proof}
 By  \eqref{ito}  and \cref{prop:skorohod_IBP} we have
\begin{align*}
\sigma (X_u) \delta_N \vph(u) =&  \int_{l_N(u)}^u \sigma (X_u)\vph_x(r,X_r,L) \sigma(X_r) \delta W_r \notag \\
&+  \int_{l_N(u)}^u \vph_x(r,X_r,L) \sigma(X_r) D_r \sigma (X_u) dr \notag \\
& +  \int_{l_N(u)}^u  \sigma (X_u) \sigma(X_r)\nabla_y \vph_{x}(r,X_r,L)\cdot D_rL dr \notag \\
& - \int_{l_N(u)}^{u} \sigma (X_u) \sigma(X_r)\sigma'(X_r) \vph_x(r,X_r,L) dr.
\end{align*}
 
For  $   l_N(s) <t <s <T$   and $u \in (0,T)$ we get
\begin{align*}
D_tD_s \big ( \sigma (X_u) \delta_N \vph(u)\big ) 
&= D_t ( \sigma (X_u)\vph_x(s,X_s,L) \sigma(X_s) )\one_{(l_N(u),u]}(s)\\
& \quad +   D_s ( \sigma (X_u)\vph_x(t,X_t,L) \sigma(X_t)) \one_{(l_N(u),u]}(t)  \\
& \quad +   \int_{l_N(u)}^u  D_tD_s (\sigma (X_u)\vph_x(r,X_r,L) \sigma(X_r) ) \delta W_r \notag \\
& \quad +  \int_{l_N(u)}^u D_tD_s \delta(u,r)dr \\
&= A_1(t,s,u) + A_2 (t,s,u) + A_3(t,s,u)  + A_4 (t,s,u) 
\end{align*}
with  
\begin{align*}
    \delta(u,r)
&:= \vph_x(r,X_r,L) \sigma(X_r) D_r \sigma (X_u)   +  \sigma (X_u) \sigma(X_r)\nabla_y \vph_{x}(r,X_r,L)\cdot D_rL \\
&\quad  - \sigma (X_u) \sigma(X_r)\sigma'(X_r) \vph_x(r,X_r,L).
\end{align*}
Therefore,
\begin{multline*}
 N \int_0^T\int_{l_N(s)}^s \int_0^T \E | A_1(t,s,u) |^2  du dt ds \\=
N \int_0^T\int_{l_N(s)}^s\int_{l_N(s)}^{u_N(s)} \E | A_1(t,s,u) |^2  du dt ds
\end{multline*}
is of order $1/N$  since $\E | A_1(t,s,u) |^2 $ is uniformly bounded in $(t,s,u)$.    A similar argument holds for $A_2 (t,s,u) $.
   For  $ A_3(t,s,u)$ we first use \cref{prop:skorohod_covariance}    and get integrals over $({l_N(u)},u]$.  
   Since the integrand is uniformly bounded in $L_2(\PP),$  also here we get  convergence to zero.
   Finally, for $A_4 (t,s,u)$ we estimate first use H{\"o}lder's inequality and then repeat the above arguments.
    The second limit equation can be treated in the same way.
\end{proof}


\subsection{Proof of \cref{thm:error_limit}} \label{sec:proof:thm:error_limit} 
By \cref{sec:proof_strategy} we only need to consider $Z^T$.
We have by \cref{first-chaos} and \cref{thm:2nd_chaos} that
\begin{align*}
&   \lim_{N\to\infty} N \var (Z^T-Z^T_{\tau_N^\theta}) \\
& = \frac{1}{2\theta} \int_0^T (1-t)^{1-\theta}
     \int_t^T \noo \E_s [ \sigma(X_t)^2\nabla_y \vph_x(t,X_t,L)
     \cdot D_s L]\rrm^2_{L_2}dsdt \\
& +\frac{1}{2\theta} \int_0^T (1-t)^{1-\theta} \noo \sigma^2(X_t)
    \E_t  \vph_x(t,X_t,L) \rrm^2_{L_2} dt \\
& = \frac{1}{2\theta} \int_0^T (1-t)^{1-\theta}
    \noo \sigma^2(X_t) \vph_x(t,X_t,L)  \rrm^2_{L_2} dt,
\end{align*}
where the last line follows immediately from the  Clark-Ocone formula
\[ \vph_x(t,X_t,L) = \E_t  \vph_x(t,X_t,L)   + \int_t^T \E_s D_s \vph_x(t,X_t,L) dW_s.\]


\begin{appendix}


\section{Malliavin Calculus}

We collect some facts from Malliavin calculus.
General references are \cite{Nualart:Pardoux:1988} and \cite{Nualart:book:2006}.
\medskip

\begin{definition}[Skorohod integral] \
\begin{enumerate}[(i)]
            \item We let $\dom(\delta)$ be the set of $u \in L_2([0,1]\times\Omega)$ such that
            \[
                  \left|\mathbb{E}\int_{0}^{1} D_tF u_tdt\right|^2 \leq C_u \E |F|^2
            \]
            for all $F \in \mathbb{D}_{1,2}$, where $C_u\ge 0$ is a constant depending on $u$.

            \item If $u\in \dom(\delta)$, then $\delta(u)$ is the element of $L_2(\Omega)$ characterized by
            \[
            \mathbb{E}(F\delta(u)) = \mathbb{E}\int_0^1 (D_t F) u_t dt \sptext{1}{for}{1} F \in \mathbb{D}_{1,2}.
            \]
           
      \end{enumerate}
The operator $\delta$ is called the {\normalfont divergence operator} and the divergence $\delta(u)$ is called the {\it Skorohod integral} of the process $u$, denoted by
      \[
            \delta(u) = \int_0^1 u_t \delta W_t.
      \]

\end{definition}

\begin{definition}[Space $\mathbb{L}_{1,2}$]
We let $\mathbb{L}_{1,2}$ be the space of $u \in L_2([0,1]\times\Omega)$
such that there is a version of $u$ with $u_t \in \mathbb{D}_{1,2}$ for all $t\in [0,1]$
and there are representatives of the Malliavin derivatives such that
$(s,t,\omega) \mapsto D_su_t(\omega)$ is
$\mathcal{B}([0,1])\otimes\mathcal{B}([0,1])\otimes\mathcal{F}$-measurable
with $\E \int_0^1\int_0^1(D_su_t)^2dsdt < \infty$.
\end{definition}
\medskip

\begin{proposition}[Covariance formula for the Skorohod integral, {\cite[Proposition 3.1]{Nualart:Zakai:1986}}, {\cite[(1.54) on page 42]{Nualart:book:2006}}]
\label{prop:skorohod_covariance}
One has $\mathbb{L}_{1,2}\subseteq \dom(\delta)$ and for $u,v \in \mathbb{L}_{1,2}$ one has, a.s.,
\[\mathbb{E}\left(\int_0^1 u_t \delta W_t\int_0^1v_t \delta W_t\right) = \int_0^1 \mathbb{E}(u_tv_t)dt + \int_0^1\int_0^1\mathbb{E}(D_su_tD_tv_s)dsdt. \]
\end{proposition}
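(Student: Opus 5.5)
The plan is to derive the covariance formula from the duality relation defining $\delta$, combined with the commutation relation between $D$ and $\delta$. I would first work on a dense class of elementary processes and then pass to the limit in $\mathbb{L}_{1,2}$.

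First, take $u_t=\sum_{i} F_i h_i(t)$ with $F_i$ smooth cylindrical random variables and $h_i\in L_2([0,1])$. For such $u$ one has the explicit formula
\[ \delta(u)=\sum_i F_i \int_0^1 h_i\,dW - \sum_i \int_0^1 D_tF_i\, h_i(t)\,dt. \]
From this formula one checks directly that $\delta(u)$ is smooth. One also obtains the commutation relation
\[ D_s\,\delta(u) = u_s + \int_0^1 D_s u_t\,\delta W_t . \]

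Next, apply the duality in part (ii) of the definition with $F=\delta(v)\in\DD_{1,2}$:
\[ \E\big(\delta(u)\delta(v)\big)=\E\int_0^1 u_s D_s\delta(v)\,ds =\E\int_0^1 u_sv_s\,ds+\E\int_0^1 u_s\,\delta\big(D_s v_\cdot\big)\,ds . \]
Applying duality once more, for each fixed $s$, to the last term gives $\E\int_0^1 D_t u_s\, D_s v_t\,dt$. This yields the claimed identity for elementary $u,v$. With $v=u$ it gives the bound
\[ \E\delta(u)^2\le \|u\|^2_{L_2([0,1]\times\Om)}+\E\int\!\!\int (D_su_t)^2\,ds\,dt = \|u\|_{\mathbb{L}_{1,2}}^2 , \]
where the inequality uses Cauchy--Schwarz applied to $\E\int\!\!\int D_tu_s\,D_su_t$.

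Finally, approximate a general $u\in\mathbb{L}_{1,2}$ by elementary $u^n$ in the $\mathbb{L}_{1,2}$ norm; density holds because $\mathbb{L}_{1,2}\cong L_2([0,1];\DD_{1,2})$. By the bound above, $\delta(u^n)$ is Cauchy in $L_2$. Since $\delta$ is closed (as the adjoint of $D$), it follows that $u\in\dom(\delta)$ and $\delta(u^n)\to\delta(u)$. Both sides of the covariance identity are continuous bilinear forms in the $\mathbb{L}_{1,2}$ norm, so the identity passes to the limit.

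I expect the main obstacle to be the justification of the commutation relation and of the interchange of $\E$ and the $ds$-integral in the second duality step. On elementary processes these reduce to a finite computation using $D_s\int h\,dW=h(s)$ and the product rule, so that is where I would check everything carefully. Everything else is a density argument.
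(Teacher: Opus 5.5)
Your proposal is correct and is essentially the standard proof from the source the paper cites (Nualart's book, Proposition 1.3.1); the paper itself gives no argument beyond that citation. As there, you verify the identity for elementary processes $\sum_i F_i h_i$ via duality and the commutation relation $D_s\delta(u)=u_s+\delta(D_s u)$ (which rests on $D_sD_tF_i=D_tD_sF_i$), deduce $\E\,\delta(u)^2\le\|u\|_{\mathbb{L}_{1,2}}^2$, and extend to $\mathbb{L}_{1,2}$ by density and closedness of $\delta$, the only cosmetic difference being that you work pointwise in $s$ where Nualart expands along an orthonormal basis of $L_2([0,1])$.
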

\bigskip

From the above proposition one gets for $0\le a \le b \le 1$ and
$u\in \mathbb{L}_{1,2}$ that $\one_{[a,b]}u\in \mathbb{L}_{1,2}\subseteq \dom(\delta)$ and can
use the notation
\[ \int_a^b u_t \delta W_t := \delta(\one_{[a,b]}u).\]

\begin{proposition}[{\cite[Theorem 3.2]{Nualart:Pardoux:1988}}]
\label{prop:skorohod_IBP}
Let $F \in \mathbb{D}_{1,2}$ and $u\in \dom(\delta)$ such that
$Fu \in L_2([0,1]\times\Omega)$. Then $Fu\in \dom(\delta)$ and
\[ \int_0^1Fu_t \delta W_t = F \int_0^1 u_t \delta W_t -\int_0^1 D_tF u_t dt
   \mbox{ a.s.} \]
if the right hand side belongs to $L_2(\Omega)$.
\end{proposition}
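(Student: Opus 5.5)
The statement to prove is the integration-by-parts formula for the Skorohod integral: for $F\in\DD_{1,2}$ and $u\in\dom(\delta)$ with $Fu\in L_2([0,1]\times\Omega)$, the process $Fu$ lies in $\dom(\delta)$ and $\delta(Fu)=F\delta(u)-\int_0^1 D_tF\,u_t\,dt$, provided the right-hand side is in $L_2(\Omega)$.

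I would verify this directly from the duality definition of $\delta$. Set
\[ \Xi:=F\int_0^1 u_t\,\delta W_t-\int_0^1 D_tF\,u_t\,dt, \]
which lies in $L_2(\Omega)$ by hypothesis. It then suffices to show
\[ \E(G\,\Xi)=\E\int_0^1 (D_tG)\,F u_t\,dt \]
for all $G$ in a dense, convenient subclass of $\DD_{1,2}$. Once this identity holds, both $Fu\in\dom(\delta)$ and $\delta(Fu)=\Xi$ follow.

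\textbf{Step 1: smooth bounded test functionals.} I would first take $G$ to be a smooth cylindrical functional with bounded derivatives, for example $G=g(W(h_1),\dots,W(h_n))$ with $g\in C_b^\infty$. Suppose in addition that $F$ is bounded with bounded derivative. Then $FG\in\DD_{1,2}$ and the product rule gives $D(FG)=F\,DG+G\,DF$. Applying the duality relation for $u$ with test variable $FG$ yields
\[ \E(FG\,\delta(u))=\E\int_0^1 (F D_tG+GD_tF)u_t\,dt. \]
Rearranging gives $\E(G\,\Xi)=\E\int_0^1 D_tG\,Fu_t\,dt$, which is the desired identity in this case.

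\textbf{Step 2: removing the regularity assumption on $F$.} This is the main obstacle. For general $F\in\DD_{1,2}$, the product $FG$ need not be handled directly. I would approximate $F$ by $F_n:=\psi_n(F)$, where $\psi_n$ is a smooth bounded truncation with $|\psi_n'|\le 1$ and $\psi_n(x)=x$ on $[-n,n]$. The chain rule gives $F_n\in\DD_{1,2}$ with $D F_n=\psi_n'(F)\,DF$, and Step 1 applies to each $F_n$. As $n\to\infty$, dominated convergence passes the limit through every term:
\begin{enumerate}[(i)]
\item $\E(GF_n\delta(u))\to\E(GF\delta(u))$, since $G$ is bounded and $F\delta(u)\in L_1$;
\item $\E\int_0^1 G\,\psi_n'(F)\,D_tF\,u_t\,dt\to\E\int_0^1 G\,D_tF\,u_t\,dt$;
\item $\E\int_0^1 D_tG\,F_n u_t\,dt\to\E\int_0^1 D_tG\,F u_t\,dt$, using that $DG$ is bounded and $Fu\in L_2$.
\end{enumerate}
The integrability needed here must be checked. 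The term $F\delta(u)$ is in $L_1$ as a product of two $L_2$ variables. The term $\int_0^1 D_tF\,u_t\,dt$ is in $L_1$ by Cauchy--Schwarz, since both $DF$ and $u$ lie in $L_2([0,1]\times\Omega)$.

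\textbf{Step 3: density.} Smooth functionals are dense in $\DD_{1,2}$, and both sides of the identity are continuous in $G$ with respect to the $\DD_{1,2}$-norm: the left side because $\Xi\in L_2$, the right side because $Fu\in L_2$. Hence the identity extends to all $G\in\DD_{1,2}$. This also yields the bound $|\E\int_0^1 D_tG\,Fu_t\,dt|\le\|\Xi\|_{L_2}\|G\|_{L_2}$, so $Fu\in\dom(\delta)$ and $\delta(Fu)=\Xi$, which proves the claim.
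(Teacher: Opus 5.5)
Your argument is correct, and it is essentially the standard duality proof of this integration-by-parts formula, as in Nualart's monograph; the paper gives no proof of its own and only cites Nualart--Pardoux.

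One remark on Step 2: it is superfluous, and as written it applies Step 1 under a hypothesis that $\psi_n(F)$ does not satisfy, since $D\psi_n(F)=\psi_n'(F)\,DF$ need not be bounded. Step 1 in fact works for every $F\in\DD_{1,2}$. For $G=g(W(h_1),\dots,W(h_n))$ with $g\in C_b^\infty$, you get $FG\in\DD_{1,2}$ and $D(FG)=F\,DG+G\,DF$ by approximating $F$ with smooth functionals in $\DD_{1,2}$ and using closability of $D$; only the boundedness of $G$ and of $\|DG\|_{L_2[0,1]}$ is needed.
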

\bigskip

\begin{lemma}[It\^o's formula]
\label{lemma:ito}
Assume $f: [0,1]\times\R\times\R^{K} \to  \R$ such that the partial derivatives $f_x, f_t, \nabla_y f, f_{xx}, \nabla_y f_x$ exist, are continuous,
and polynomially bounded in $(x,y)$ uniformly in $t$.  Assume that $L=(L_1,\dots,L_K)$ is a  random variable such that  $L_i \in \DD_{1,4}$.
Then for $0\le s < t \le 1$ one has, a.s.,
\begin{align*}
   f(t, X_t,L)
& = f(s, X_s,L) + \int_s^t f_u(u, X_u,L) du \\
&\quad  +\int_s^t f_x(u, X_u,L) \sigma(X_u) \delta W_u \\
&\quad  +\half \int_s^t f_{xx}(u, X_u,L) \sigma^2(X_u) du \\
&\quad  + \int_s^t\nabla_y f_x(u, X_u,L) \sigma(X_u) \cdot D_uL du.
\end{align*}
\end{lemma}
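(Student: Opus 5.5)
I would derive the formula from the classical It\^o formula with a frozen parameter $y$, and then substitute $y=L$ by passing through Riemann sums, using \cref{prop:skorohod_IBP} to move the anticipating factor inside a Skorohod integral.

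\textbf{Step 1 (frozen parameter).} Fix $y\in\R^K$. Since $dX_u=\sigma(X_u)dW_u$, the classical It\^o formula gives, a.s.,
\[ f(t,X_t,y)=f(s,X_s,y)+\int_s^t \Big[f_u+\tfrac12 f_{xx}\sigma^2(X_u)\Big](u,X_u,y)\,du+\int_s^t f_x(u,X_u,y)\sigma(X_u)\,dW_u . \]
To evaluate this at the random point $y=L$ I need a version of the It\^o integral that is jointly continuous in $y$. I would obtain it by Kolmogorov's criterion from moment bounds of the form
\[ \E\Big|\int_s^t [f_x(u,X_u,y)-f_x(u,X_u,y')]\sigma(X_u)\,dW_u\Big|^p\le c\,|y-y'|^p(1+|y|+|y'|)^{cp}, \]
which follow from the polynomial bounds on $\nabla_y f_x$ and the BDG inequality. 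The $du$-integrals are continuous in $y$ by dominated convergence. With these versions the identity holds for all $y$ simultaneously, so it may be evaluated at $y=L$. It then remains to identify $\big(\int_s^t f_x(u,X_u,y)\sigma(X_u)\,dW_u\big)\big|_{y=L}$.

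\textbf{Step 2 (Riemann sums and integration by parts).} Take partitions $s=r_0<\dots<r_J=t$ with mesh tending to zero, and set $F_j:=f_x(r_j,X_{r_j},L)\,\sigma(X_{r_j})$. The Riemann sums $\sum_j f_x(r_j,X_{r_j},y)\sigma(X_{r_j})(W_{r_{j+1}}-W_{r_j})$ converge to the It\^o integral uniformly for $y$ in compacts, in probability. Evaluating at $y=L$ (and truncating $|L|\le R$ if needed), the sums with $y=L$ converge to the substituted integral. Because $L_i\in\DD_{1,4}$ and $f_x$ has polynomially bounded derivatives, the chain rule gives $F_j\in\DD_{1,2}$. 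For $u>r_j$ we have $D_uX_{r_j}=0$, hence
\[ D_uF_j=\sigma(X_{r_j})\,\nabla_y f_x(r_j,X_{r_j},L)\cdot D_uL . \]
\cref{prop:skorohod_IBP}, applied with $F=F_j$ and $u=\one_{(r_j,r_{j+1}]}$, yields
\[ F_j(W_{r_{j+1}}-W_{r_j})=\int_{r_j}^{r_{j+1}}F_j\,\delta W_u+\int_{r_j}^{r_{j+1}}\sigma(X_{r_j})\,\nabla_y f_x(r_j,X_{r_j},L)\cdot D_uL\,du . \]

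\textbf{Step 3 (limits).} The correction sum converges in $L_1$, and in fact in $L_2$, to $\int_s^t \nabla_y f_x(u,X_u,L)\sigma(X_u)\cdot D_uL\,du$. This uses continuity of $u\mapsto (X_u,\nabla_y f_x)$, the polynomial bounds, and H\"older's inequality with $L\in\DD_{1,4}$. Put $v^J_u:=\sum_j F_j\one_{(r_j,r_{j+1}]}(u)$; then $v^J\to f_x(\cdot,X_\cdot,L)\sigma(X_\cdot)\one_{(s,t]}$ in $L_2([0,1]\times\Omega)$. Moreover $\delta(v^J)$, which is the Riemann sum minus the correction sum, converges in probability (from Step 2 and the correction limit). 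I would upgrade this to $L_2$ convergence, for instance via the covariance formula \cref{prop:skorohod_covariance}, which bounds $\|\delta(v^J-v^{J'})\|_{L_2}$ by the $\mathbb{L}_{1,2}$-norm of $v^J-v^{J'}$; that norm tends to zero by the same continuity and bound arguments. Since $\delta$ is closed, the limit lies in $\dom(\delta)$ and equals $\int_s^t f_x(u,X_u,L)\sigma(X_u)\,\delta W_u$. Collecting the terms gives the stated formula.

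\textbf{Main obstacle.} The delicate points are the legitimacy of the substitution $y=L$ in Step 1 and the $L_2$ convergence of the Skorohod Riemann sums in Step 3. I would handle the first by the Kolmogorov continuous version together with uniform-on-compacts convergence of the Riemann sums, plus a localization in $|L|$. I would handle the second through closedness of $\delta$ combined with the Nualart--Zakai covariance bound; the $\DD_{1,4}$ assumption on $L$ is exactly what makes the H\"older estimates in that bound work.
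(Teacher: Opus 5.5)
Your argument is correct in substance and has the same skeleton as the paper's proof: the classical It\^o formula with $y$ frozen, followed by the substitution $y=L$. The difference is the last step. The paper substitutes without comment and then quotes \cite[Proposition 4.12]{Nualart:Pardoux:1988} to identify $\int_s^t f_x(u,X_u,y)\sigma(X_u)\,dW_u\big|_{y=L}$ with the Skorohod integral plus the trace term $\int_s^t \nabla_y f_x(u,X_u,L)\sigma(X_u)\cdot D_uL\,du$. You prove this substitution formula yourself, in three moves:
\begin{enumerate}[(i)]
\item the Kolmogorov-continuous version in $y$ makes rigorous the evaluation at $y=L$ that the paper performs silently;
\item \cref{prop:skorohod_IBP}, applied to $F_j\one_{(r_j,r_{j+1}]}$, produces the trace term precisely because $D_uX_{r_j}=0$ for $u>r_j$;
\item the bound $\|\delta(w)\|_{L_2}\le\|w\|_{\mathbb{L}_{1,2}}$ from \cref{prop:skorohod_covariance} lets you pass to the limit. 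Since the limit process itself lies in $\mathbb{L}_{1,2}$, closedness of $\delta$ is not even needed.
\end{enumerate}
Your route is self-contained and uses only the appendix tools; the citation buys brevity and a localized framework. One point in Step 2 should be made explicit. Convergence of the Riemann sums uniformly for $y$ in compacts needs the Kolmogorov (or Garsia--Rodemich--Rumsey) tightness argument applied to the error field, using that its Lipschitz-in-$y$ moment bound holds uniformly in the partition.

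The claim that needs correcting is that $L_i\in\DD_{1,4}$ gives $F_j\in\DD_{1,2}$ and the $\mathbb{L}_{1,2}$ bounds in Step 3. For polynomial growth of higher degree in $y$ this fails. Take $f(t,x,y)=xy^3$ and $X=W$, so that $F_j=L^3$. Then $L=e^{aW_1^2}$ with $a\in[1/12,1/8)$ belongs to $\DD_{1,4}$, but $L^3\notin L_2$. H\"older's inequality with $\|DL\|_{L_2[0,1]}\in L_4$ only works when $f_x$ and $\nabla_y f_x$ grow with low degree in $y$. You therefore need one of two fixes:
\begin{enumerate}[(i)]
\item Assume $L$ has moments of all orders. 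This holds in the paper's application, since $L=v(X_{s_1},\dots,X_{s_M})$ with $v\in C_p^\infty(\R^M)$ lies in $\DD_{1,p}$ for every $p$.
\item Localize with a smooth cutoff $\chi_R(L)$, where $\chi_R$ is bounded with bounded derivative and equals the identity on $\{|y|\le R\}$, and use the local property of $\delta$ on $\mathbb{L}_{1,2}$.
\end{enumerate}
The indicator truncation $\one_{\{|L|\le R\}}$ you mention is fine for the convergence-in-probability part, but it is not Malliavin differentiable, so it cannot serve the Malliavin estimates. The same defect is already present in the statement: in the example above, the Skorohod integral on the right-hand side does not exist in the $L_2$ sense used in the paper. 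So it does not damage your strategy, but your proof should say which fix it uses.
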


\begin{proof}
For $y\in \R^K$ fixed we have by Itô's formula, a.s.,
\begin{align*}
    f(t, X_t,y)
& = f(s, X_s,y) + \int_s^t f_u(u, X_u,y) du \\
&\quad +\int_s^t f_x(u, X_u,y) \sigma(X_u)  dW_u \\
&\quad +\half \int_s^t f_{xx}(u, X_u,L) \sigma^2(X_u) du
\end{align*}
from which we get by substitution, a.s.,
\begin{align*}
      f(t, X_t,L)
& = f(s, X_s,L) + \int_s^t f_u(u, X_u,L) du \\
& \quad +\int_s^t f_x(u, X_u,y) \sigma(X_u) d W_u \biggr|_{y = L} \\
& \quad +\half \int_s^t f_{xx}(u, X_u,L) \sigma^2(X_u) du.
\end{align*}
The statement follows from \cite[Proposition 4.12]{Nualart:Pardoux:1988}.
\end{proof}
\medskip

\begin{proposition}[{\cite[Proposition 5.5]{Nualart:Pardoux:1988}}]
\label{prop:derivative_of_skorohod_integral}
For $u \in \mathbb{L}_{2,2}$ one has $\int_0^1 u_t \delta W_t \in \mathbb{D}_{1,2}$ with
      \[
            D_t \int_0^1 u_s \delta W_s = u_t + \int_0^1 D_t u_s \delta W_s \mbox{ a.s.}\sptext{1}{for a.a.}{1} t\in [0,1].
      \]
\end{proposition}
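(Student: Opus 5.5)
We prove the commutation relation $D_t\delta(u)=u_t+\delta(D_tu)$ first for elementary processes, where it is a direct computation, and then extend it to $\mathbb{L}_{2,2}$ using the closedness of $D$ and the $L_2$-continuity of $\delta$ provided by \cref{prop:skorohod_covariance}.

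\textbf{Step 1 (elementary processes).} Let $\mathcal{S}_{[0,1]}$ be the class of processes
\[ u_s=\sum_{j=1}^J F_j h_j(s), \]
where $F_j$ are smooth cylindrical random variables and $h_j\in L_2([0,1])$.
\begin{itemize}
\item By \cref{prop:skorohod_IBP}, applied with $u=h_j$ (so that $\delta(h_j)=W(h_j)$), we get
\[ \delta(u)=\sum_j F_jW(h_j)-\sum_j\int_0^1 D_rF_j\,h_j(r)\,dr . \]
\item Applying $D_t$ with the product rule and $D_tW(h_j)=h_j(t)$ gives
\[ D_t\delta(u)=\sum_j F_jh_j(t)+\sum_j D_tF_j\,W(h_j)-\sum_j\int_0^1 D_tD_rF_j\,h_j(r)\,dr . \]
\item On the other hand $D_tu_s=\sum_j D_tF_j\,h_j(s)$ is again elementary, so the same formula yields
\[ \delta(D_tu)=\sum_j D_tF_j\,W(h_j)-\sum_j\int_0^1 D_rD_tF_j\,h_j(r)\,dr . \]
\item The second Malliavin derivative of a smooth cylindrical variable is symmetric, $D_tD_rF_j=D_rD_tF_j$. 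Hence $D_t\delta(u)=u_t+\delta(D_tu)$ for all $t$.
\end{itemize}

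\textbf{Step 2 (a priori estimates).} From \cref{prop:skorohod_covariance} and Cauchy–Schwarz, every $v\in\mathbb{L}_{1,2}$ satisfies
\[ \E|\delta(v)|^2\le \E\int_0^1 v_s^2\,ds+\E\int_0^1\!\!\int_0^1 |D_rv_s|^2\,dr\,ds=\|v\|^2_{\mathbb{L}_{1,2}} . \]
Apply this to $v=D_tu$ for a.e. fixed $t$ and integrate in $t$. For $u\in\mathbb{L}_{2,2}$, and after choosing jointly measurable versions, this gives
\[ \int_0^1\E|\delta(D_tu)|^2\,dt\le \|Du\|^2_{L_2([0,1]^2\times\Omega)}+\|D^2u\|^2_{L_2([0,1]^3\times\Omega)}\le\|u\|^2_{\mathbb{L}_{2,2}} . \]
Consequently the map $u\mapsto\big(u_t+\delta(D_tu)\big)_t$ is continuous from $\mathbb{L}_{2,2}$ into $L_2([0,1]\times\Omega)$. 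Likewise $u\mapsto\delta(u)$ is continuous from $\mathbb{L}_{1,2}$ into $L_2(\Omega)$.

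\textbf{Step 3 (density and closedness).} Take $u\in\mathbb{L}_{2,2}$ and elementary $u^n\to u$ in $\mathbb{L}_{2,2}$; such a sequence exists because $\mathcal{S}_{[0,1]}$ is dense in $\mathbb{L}_{2,2}=\DD_{2,2}(L_2[0,1])$.
\begin{itemize}
\item By Step 2, $\delta(u^n)\to\delta(u)$ in $L_2(\Omega)$.
\item By Steps 1 and 2, $D\delta(u^n)=u^n+\delta(D_\cdot u^n)$ converges in $L_2([0,1]\times\Omega)$ to $u+\delta(D_\cdot u)$.
\item Since $D$ is closed, $\delta(u)\in\DD_{1,2}$ and $D_t\delta(u)=u_t+\delta(D_tu)$ for a.e. $t$, almost surely.
\end{itemize}

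The main obstacle is in Step 2. One must justify that $t\mapsto\delta(D_tu)$ admits a jointly measurable version. One must also justify that the covariance bound can be integrated in $t$, which needs Fubini together with a measurable selection of representatives of $D_rD_tu_s$. I would handle this by first establishing the estimate on $\mathcal{S}_{[0,1]}$. There everything is explicit, so the bound defines a bounded linear operator $\mathbb{L}_{2,2}\to L_2([0,1]\times\Omega)$. Then I would identify its value with $\delta(D_tu)$ for a.e. $t$ by Fubini applied to the duality relation characterising $\delta$.
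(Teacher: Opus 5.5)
Your argument is correct. The paper itself gives no proof of this statement; it is quoted from \cite[Proposition 5.5]{Nualart:Pardoux:1988}. The natural comparison is therefore with a standard chaos-expansion proof of the commutation relation.

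\textbf{The chaos-expansion route.} One writes $u_t=\sum_n I_n(f_n(\cdot,t))$, so that $\delta(u)=\sum_n I_{n+1}(\tilde f_n)$ and $D_t\delta(u)=\sum_n (n+1)I_n(\tilde f_n(\cdot,t))$. Splitting $(n+1)\tilde f_n(\cdot,t)$ gives $f_n(\cdot,t)$, which is the kernel of $u_t$, plus the $n$ terms in which $t$ occupies one of the first $n$ arguments of $f_n$. After symmetrisation those $n$ terms give $n\,I_n$ of the kernel $(t_1,\dots,t_n)\mapsto f_n(t_1,\dots,t_{n-1},t,t_n)$, which is exactly the $n$-th chaos of $\delta(D_tu)$. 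Membership $\delta(u)\in\DD_{1,2}$ then follows from summability of the resulting series. That route needs no density or closedness argument. It works under the weaker hypothesis $u\in\mathbb{L}_{1,2}$, $D_tu\in\dom(\delta)$ for a.e.\ $t$, and $(\delta(D_tu))_t\in L_2([0,1]\times\Omega)$. However, the last condition must be assumed, and the symmetrisation bookkeeping has to be done.

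\textbf{Your route.} You use only three ingredients: the explicit computation on $\mathcal{S}_{[0,1]}$, the bound $\E|\delta(v)|^2\le\|v\|^2_{\mathbb{L}_{1,2}}$ from \cref{prop:skorohod_covariance}, and closedness of $D$. The $\mathbb{L}_{2,2}$ hypothesis is exactly what makes $u\mapsto\delta(D_\cdot u)$ bounded. As a result, square-integrability of $(\delta(D_tu))_t$ comes out as a conclusion rather than an assumption.

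\textbf{The measurability issue.} Your proposed fix works. It even removes the need to apply the covariance bound slice-wise to a general $u\in\mathbb{L}_{2,2}$, or to know that $D_tu\in\mathbb{L}_{1,2}$ for a.e.\ $t$. Let $T$ be the bounded extension from $\mathcal{S}_{[0,1]}$, where the estimate is explicit and joint measurability is clear, and take $u^n\in\mathcal{S}_{[0,1]}$ with $u^n\to u$ in $\mathbb{L}_{2,2}$.
\begin{enumerate}
\item Test the identity $\E[G\,\delta(D_tu^n)]=\E\int_0^1 D_sG\,D_tu^n_s\,ds$ against $\phi\in L_\infty[0,1]$ and let $n\to\infty$.
\item Do this for a countable $\DD_{1,2}$-dense family of $G$.
\item Both sides are continuous in $G\in\DD_{1,2}$ for a.e.\ fixed $t$, so the identity extends to all $G\in\DD_{1,2}$.
\end{enumerate}
This shows that for a.e.\ $t$ one has $D_tu\in\dom(\delta)$ with $\delta(D_tu)=(Tu)(t)$.
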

\bigskip

\begin{lemma}\label{lemma:expectation_of_skorohod}
For $u \in \mathbb{L}_{1,2}$ and $s\in [0,1]$ one has
\[\E_s \int_0^1 u_r \delta W_ r = \int_0^1 \left [ \one_{[0,s]}(r)\E_s u_r \right ] \delta W_r
   \mbox{ a.s.}\]
\end{lemma}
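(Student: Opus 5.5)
The plan is to reduce to adapted integrands by duality, using the definition of the Skorohod integral and the fact that $\E_s$ is self-adjoint on $L_2$. Put $\xi:=\int_0^1 u_r\,\delta W_r$ and $v_r:=\one_{[0,s]}(r)\E_s u_r$. Two things have to be shown: first, that $v\in\dom(\delta)$ (in fact $v\in\mathbb{L}_{1,2}$); second, that
\[ \E\big(G\,\E_s\xi\big)=\E\Big(G\int_0^1 v_r\,\delta W_r\Big) \]
for all $G$ in a total subset of $L_2$, for instance all $G\in\DD_{1,2}$. Once both hold, the claim follows by density.

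\textbf{Step 1 (membership).} For $u\in\mathbb{L}_{1,2}$ I will use the standard commutation relation $D_t\E_s u_r=\one_{[0,s]}(t)\E_s D_t u_r$ a.s. One way to get it is through chaos expansions. Another is to take smooth cylindrical functionals, where it follows because $\E_s$ acts by integrating out the increments after $s$, and then pass to the limit by closedness of $D$. This relation gives $\E\int\!\int |D_tv_r|^2\,dt\,dr\le \E\int\!\int|D_tu_r|^2\,dt\,dr<\infty$ and $\E\int_0^1 v_r^2\,dr\le\E\int_0^1u_r^2\,dr$. Hence $v\in\mathbb{L}_{1,2}\subseteq\dom(\delta)$ by \cref{prop:skorohod_covariance}.

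\textbf{Step 2 (duality).} Take $G\in\DD_{1,2}$. Since $\E_sG\in\DD_{1,2}$ with $D_r\E_sG=\one_{[0,s]}(r)\E_sD_rG$, we get
\[ \E(G\,\E_s\xi)=\E(\E_sG\,\xi)=\E\int_0^1 D_r(\E_sG)\,u_r\,dr=\E\int_0^s \E_s(D_rG)\,u_r\,dr. \]
Moving $\E_s$ onto $u_r$ turns the right-hand side into
\[ \E\int_0^s D_rG\,\E_s u_r\,dr=\E\int_0^1 D_rG\,v_r\,dr=\E\Big(G\int_0^1 v_r\,\delta W_r\Big). \]
As $\DD_{1,2}$ is dense in $L_2$, the two random variables $\E_s\xi$ and $\int_0^1 v_r\,\delta W_r$ coincide a.s.

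\textbf{Main obstacle.} The only nonroutine point is the commutation relation $D_t\E_s F=\one_{[0,s]}(t)\E_sD_tF$ for $F\in\DD_{1,2}$, together with measurability of $(r,\omega)\mapsto\E_su_r$ jointly in $r$. For the relation I would first verify it on $F=I_n(h_n)$. There $\E_sI_n(h_n)=I_n(h_n\one_{[0,s]}^{\otimes n})$, and both sides equal $nI_{n-1}(h_n(t,\cdot)\one_{[0,s]}^{\otimes n})$. Summing over chaoses then extends it to $\DD_{1,2}$. For measurability I would take the version $\E[u\mid\cB([0,1])\otimes\cF_s]$, as is done for $(\E_{t_{n-1}}k_t)$ in \eqref{eq:proj_1st_chaos}.
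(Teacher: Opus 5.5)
Your proof is correct, but it takes a different route from the paper's.

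\textbf{The paper's route.} The paper works entirely in the Wiener chaos. It writes $u_r=\sum_n I_n(f_n(\cdot,r))$ and uses $\int_0^1u_r\,\delta W_r=\sum_n I_{n+1}(\tilde f_n)$ together with $\E_sI_{n+1}(g)=I_{n+1}(g\one_{[0,s]}^{\otimes(n+1)})$. It then notes that symmetrizing $f_n(\cdot,r)\one_{[0,s]}^{\otimes n}(\cdot)\one_{[0,s]}(r)$ gives $\tilde f_n\one_{[0,s]}^{\otimes(n+1)}$, because the indicator is already symmetric. Hence both sides have the same kernels. That $v\in\dom(\delta)$ is left implicit there, via $\|\tilde f_n\one_{[0,s]}^{\otimes(n+1)}\|\le\|\tilde f_n\|$.

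\textbf{Your route.} You use the duality definition of $\delta$, the self-adjointness of $\E_s$, and the commutation $D_r\E_sG=\one_{[0,s]}(r)\E_sD_rG$ for scalar $G\in\DD_{1,2}$. Chaos expansions enter only through this standard fact from Nualart's book. You never manipulate symmetrized kernels or the chaos series of $\delta(u)$. (Your chaos formula should be read as $\one_{[0,s]}(t)\,nI_{n-1}(h_n(t,\cdot)\one_{[0,s]}^{\otimes(n-1)})$, which is clearly what you meant.)

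\textbf{Step 1 is unnecessary.} All equalities in Step 2 except the last one hold without knowing anything about $v$. They give, for every $G\in\DD_{1,2}$,
\[
\Bigl|\E\int_0^1 D_rG\,v_r\,dr\Bigr|=\bigl|\E(G\,\E_s\xi)\bigr|\le\|G\|_{L_2}\|\xi\|_{L_2},
\]
which is exactly the defining condition for $v\in\dom(\delta)$. So you can drop the commutation relation for $u_r$ and the joint measurability of $D_tv_r$. Only the measurable version $\E[u\mid\mathcal{B}([0,1])\otimes\mathcal{F}_s]$ of $v$ is needed, and your argument then proves the lemma for every $u\in\dom(\delta)$, not just for $u\in\mathbb{L}_{1,2}$.

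\textbf{Comparison.} The paper's computation is shorter to write down. It also yields the explicit chaos expansions of both sides, which fits the chaos-based arguments used elsewhere in the paper. Your duality argument is more robust and more general, and it makes the domain question automatic.
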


\begin{proof}
      For the Wiener chaos expansion
      \[u_r = \sum_{n = 0}^{\infty} I_n(f_n(\cdot, r))\]
      it holds by \cite[ Proposition 1.3.7, Lemma 1.2.5]{Nualart:book:2006} that
\[   \E_s \int_0^1u_r \delta W_r
   = \E_s \sum_{n = 0}^\infty I_{n+1}(\tilde{f}_n)
   = \sum_{n=0}^\infty I_{n+1}(\tilde{f}_n\one_{[0,s]}^{\otimes{n+1}}), \]
where

\equa
\tilde{f}_n(t_1, \dots, t_n, t) &=&
\frac{1}{n+1}\big[f_n(t_1, \dots, t_n,t)
\\
&&+\sum_{i = 1}^{n}f_n(t_1, \dots, t_{i-1},t,t_{i+1}, t_n, t_i)\big]
\tion
is the symmetrization of $f_n$ in all its variables. On the other hand, one has
\[   \int_0^s \E_s u_r \delta W_r
   = \int_0^s \sum_{n = 0}^{\infty} I_n(f_n(\cdot, r) \one_{[0,s]}^{\otimes n}(\cdot)\one_{[0,s]}(r)) \delta W_r
   = \sum_{n=0}^\infty I_{n+1}(\tilde{f}_n\one_{[0,s]}^{\otimes{n+1}}), \]
 which proves the assertion.
\end{proof}

\end{appendix}


{\bf Acknowledgements:}
The third author acknowledges he was supported by the Finnish Ministry of Education and Culture’s Pilot for Doctoral Programmes (Pilot project Mathematics of Sensing, Imaging and Modelling)
as well as the Finnish Centre of Excellence in Randomness and Structures.
We also thank Eija Laukkarinen for the fruitful discussions.



\end{document}